\documentclass[suppldata]{interact}
\usepackage{latexsym}
\usepackage{anyfontsize} 
\usepackage{color}
\usepackage[hidelinks]{hyperref}
\usepackage{url}
\usepackage{breakurl}
\usepackage[table]{xcolor}
\newcommand{\bburl}[1]{\textcolor{blue}{\url{#1}}}
\usepackage[utf8]{inputenc}

\usepackage{subfigure}
\usepackage{xcolor}
\usepackage{dsfont}
\usepackage{placeins} 

\newcommand{\mymod}[1]{\mod #1} 

\makeatletter
\newcommand{\monthyear}[1]{%
  \def\@monthyear{\uppercase{#1}}}
\newcommand{\volnumber}[1]{%
  \def\@volnumber{\uppercase{#1}}}
\makeatother

\newcommand{\N}{{\mathbb N}}
\newcommand{\Z}{{\mathbb Z}}
\newcommand{\1}[1]{{\mathds 1}_{\Z\setminus N\Z}\left(#1\right)}

\theoremstyle{plain}
\newtheorem{thm}{Theorem}[section] 
\newtheorem{theorem}[thm]{Theorem}

\newtheorem{definition}[thm]{Definition}
\newtheorem{proposition}[thm]{Proposition}

\numberwithin{table}{section} 
\numberwithin{figure}{section}

\newcommand{\Llra}{\mathbf{\Longleftrightarrow~}}
\newcommand{\Lra}{\mathbf{\hspace{0.08cm}\Longrightarrow~}}

\definecolor{mygreen}{RGB}{198,239,206}
\definecolor{myblue}{RGB}{221,235,247}
\definecolor{myyellow}{RGB}{255,242,204}
\definecolor{myred}{RGB}{255,200,200}  

\definecolor{mygreenD}{RGB}{120,215,150}
\definecolor{myblueD}{RGB}{140,200,255}
\definecolor{myyellowD}{RGB}{255,235,120} 
\definecolor{myredD}{RGB}{255,150,120} 

\newcommand{\gcell}[1]{\cellcolor{mygreen}#1}
\newcommand{\bcell}[1]{\cellcolor{myblue}#1}
\newcommand{\ycell}[1]{\cellcolor{myyellow}#1}

\newcommand{\gcellD}[1]{\cellcolor{mygreenD}{\bf #1}}
\newcommand{\bcellD}[1]{\cellcolor{myblueD}{\bf #1}}
\newcommand{\ycellD}[1]{\cellcolor{myyellowD}{\bf #1}}
\newcommand{\rcellD}[1]{\cellcolor{myredD}{\bf #1}}

\newcommand{\gcelld}[1]{\cellcolor{mygreen}{\bf #1}}

\newcommand{\ycelld}[1]{\cellcolor{myyellow}{\bf #1}}
\newcommand{\rcelld}[1]{\cellcolor{myred}{\bf #1}}

\begin{document}

\setcounter{page}{1}

\title{Pascal tiling and congruences modulo $N$ in Pascal's triangle.
}

\author{
\name{Etienne Rousseau\textsuperscript{a}\thanks{Email address: etienne@rousseaubottin.fr} and Guillaume
Rousseau\textsuperscript{b}\thanks{Email address: guillaume.rousseau@u-paris.fr}}
\affil{\textsuperscript{a} Corresponding author;
        \textsuperscript{b} Laboratoire Matières et Systèmes Complexes, UMR 7057, CNRS and Université Paris Cité, 10 rue Alice Domon et Léonie Duquet, F-75013 Paris Cedex 13, France
                }
}

\maketitle

\bigskip

\begin{abstract}
We investigate the properties of matrices obtained from a geometric
transformation of the first $N$ rows of Pascal's triangle. For $N > 2$,
their congruence properties form a \emph{Pascal tiling}, that is,
a perfect alternation between entries 
congruent to $0 \mymod{N}$ and the others, if and only if $N$ is prime.

This result yields an alternative proof of the classical congruence
$L_N-1\equiv 0 \mymod{N}$ for prime $N$, where $L_N$ denotes the
$N$th Lucas number. Within the framework of the {\it Pascal tiling theorem},
this congruence can be expressed as a sum of entries lying along a diagonal
of one of the matrices under consideration; when $N$ is prime, each of these
entries is congruent to $0 \mymod{N}$. By contrast, for Fibonacci pseudoprimes,
the sum remains congruent to $0 \mymod{N}$ while at least one of its terms is not.

Finally, these results are interpreted in terms of decompositions of binomial
coefficients and extended to multinomial coefficients, leading to a study of
the associated symmetries. This perspective highlights the case
where $N$ is a prime power and clarifies the conditions under which a
Pascal tiling arises.
\end{abstract}

\begin{keywords}
Pascal's triangle; Pascal tiling; binomial coefficients;
congruences modulo $N$; prime numbers; composite numbers;
Fibonacci numbers; Lucas numbers; Fibonacci pseudoprimes; 
multinomial coefficients; symmetries
\end{keywords}

\section{Introduction}

\label{sec:introduction}

Pascal's triangle occupies a central place in the history of
mathematics due to its rich combinatorial and arithmetic
structure, all of which arise from a remarkably simple definition.
Since the foundational work of Edouard Lucas---whose results led to
the theorem that now bears his name---and the early studies on the
congruence properties of the binomial coefficients forming the
triangle~\cite{lucas_sur_1878,glaisher_residue_1899,
fine_binomial_1947}, numerous works have further investigated these
properties modulo a prime or a 
prime power~\cite{granville_arithmetic_1997, mestrovic_lucas_2014}.

Pascal's triangle reduced modulo a prime exhibits a fractal
structure, as shown by Wolfram~\cite{wolfram_geometry_1984} and
Berg~\cite{berg_decouvertes_1993}. More recent studies have also
highlighted various symmetries related to the properties of the
triangle considered modulo $p$~\cite{cinkir_extension_2023} or
modulo $p^2$~\cite{rowland_lucas_2022}, where $p$ is a prime.
More generally, symmetry has been studied in the Pascal--Kummer
triangle, defined by assigning to each binomial coefficient its 
$p$-adic valuation~\cite{prunescu_symmetries_2022}.

We study the congruence properties of matrices constructed from the
first $N$ rows of Pascal's triangle.
Figure~\ref{fig:building} illustrates how one such matrix 
can be obtained geometrically by rotation, 
symmetrization and addition.
The resulting matrix $S^N$ consists of $N \times N$ cells and is symmetric
with respect to both diagonals. Each cell is colored according to the
divisibility by $N$ of the corresponding entry.
\begin{figure}[tb]
  \centering
  \includegraphics[width=1.00\linewidth]{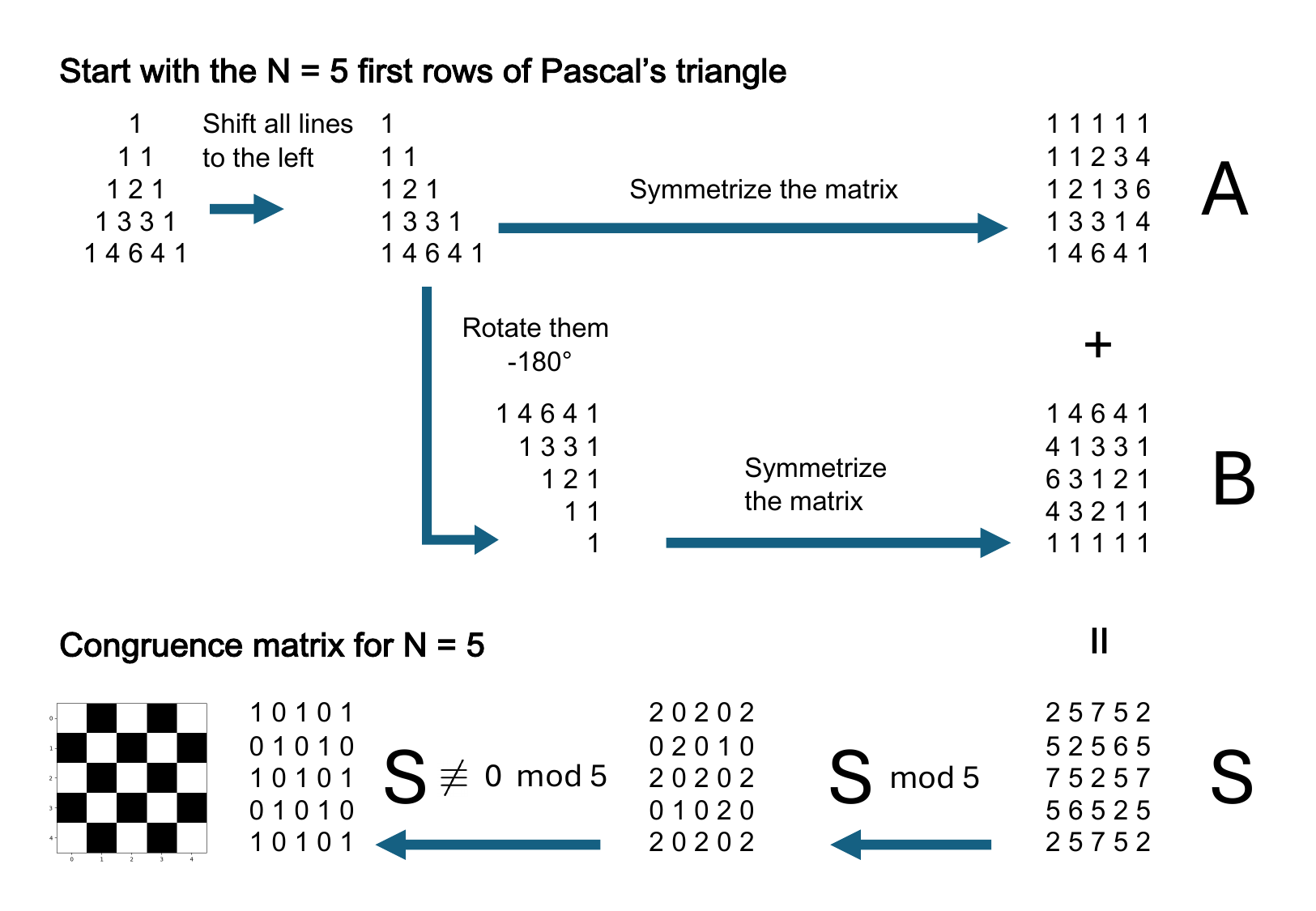}
  \caption{
  Example of the geometric construction of the matrix $S^N$ for $N = 5$,
  based on the first $N$ rows of Pascal's triangle. The congruence
  properties are visualized by coloring the $(i,j)$ entries in black
  when $S^N_{i,j} \equiv 0 \mymod{N}$, and in white otherwise.
}
  \label{fig:building}
\end{figure}

We state and prove Theorem~\ref{theo:PTT} (Section~\ref{sec:theorem}),
showing that the congruence properties of 
the matrices $S^N$ and $W^N$ exhibit a periodic pattern
if and only if $N$ is prime. We call this pattern a {\it Pascal tiling}:
in a given index grid, a perfect alternation between entries congruent to $0 \mymod{N}$
and the others, which can be viewed 
as a checkerboard structure in the 
corresponding congruence matrix.\footnote{
This notion will be refined later through a more general construction
(Theorem~\ref{theo:PTTUVK}).}

This result is then related to properties of Lucas
numbers~\cite{lucas_theorie_1878, lucas_theorie_1878-1,
lucas_theorie_1878-2} and to Fibonacci pseudoprimes
(Section~\ref{sec:fibonacci}), where we prove that the condition
$L_N - 1 \equiv 0 \mymod{N}$ can be expressed as a sum whose terms are
individually congruent to $0 \mymod{N}$ if and only if $N$ is prime
(Theorem~\ref{theo:pseudoprime}).

These results are naturally interpreted in terms of decompositions of
binomial coefficients, leading to the canonical pair $(X^N, Y^N)$
and Theorem~\ref{theo:PTTXY}, 
derived from the original decomposition
$(S^N, W^N)$. This construction extends to a
multinomial setting through the pair $(X^{N,r}_{\vec{k}},
Y^{N,r}_{\vec{k}})$, Theorem~\ref{theo:PTTmultinomial}
(Section~\ref{sec:multinomial}), and is further generalized via a
parametric linear map $\mathcal{F}_K$, leading to the pair
$(U_{\vec{k}}, V_{\vec{k}})$ and Theorem~\ref{theo:PTTUVK}. 

Section~\ref{sec:symmetry} develops this framework, 
which enables the study of the associated symmetries, 
showing how each canonical decomposition in the binomial 
case gives rise to a corresponding symmetry in the 
congruence properties of Pascal's triangle.

Moreover, in canonical decompositions, one of the three 
equivalent properties only holds in one direction, 
as prime powers provide counterexamples to the converse. 
The reconstructed decomposition restores full equivalences, 
yielding characterizations of the {\it Pascal tiling} 
that hold if and only if $N$ is prime.

Table~\ref{tab:summary} provides a concise overview of the main definitions 
and results used in the sequel.

\section{Pascal tiling theorem}
\label{sec:theorem}

In this section, we first define the matrices under study, then
present examples of their congruence properties for both prime and
composite values of $N$, and finally state the Pascal tiling theorem
(Theorem~\ref{theo:PTT}).

\begin{definition}
Let $N \geq 2$, and let $\mathcal{D}$ denote the set of
pairs $(i,j)$ such that $i,j \in \{0,\dots,N-1\}$. The matrices
$S^N$ and $W^N$ are two $N \times N$ symmetric matrices with
entries $S^N_{i,j}$ and $W^N_{i,j}$ defined for indices
$0 \leq j \leq i < N$, and extended to all indices by setting
$S^N_{j,i} = S^N_{i,j}$ and $W^N_{j,i} = W^N_{i,j}$. They are
given by
\begin{eqnarray}
S^N_{i,j} & = & A^N_{i,j} + B^N_{i,j}, \label{Sij}\\
W^N_{i,j} & = & A^N_{i,j} - B^N_{i,j},
\end{eqnarray}
where $A^N_{i,j} = \binom{i}{j}$ and
$B^N_{i,j} = \binom{N-1-j}{N-1-i}$ are binomial coefficients.
We denote by $\mathcal{D}_{j \leq i}$ the subset of
$(i,j) \in \mathcal{D}$ such that $j \leq i$.
\label{def:SijWij}
\end{definition}

\begin{definition}
The congruence properties of the entries $S^N_{i,j}$ and $W^N_{i,j}$
modulo $N$ are described by the indicator functions
\begin{eqnarray}
I^S_{N,i,j} & = & \1{S^N_{i,j}}, \label{IS}\\
I^W_{N,i,j} & = & \1{W^N_{i,j}}. \label{IW}
\end{eqnarray}
We denote by $I^S_N$ and $I^W_N$ the
corresponding congruence matrices.
\end{definition}

Figure~\ref{fig:damier} shows the matrix $I^S_N$ obtained for several
values of $N$, some of which are prime and others composite.
\begin{figure}[tb]
  \centering
  \includegraphics[width=1.00\linewidth]{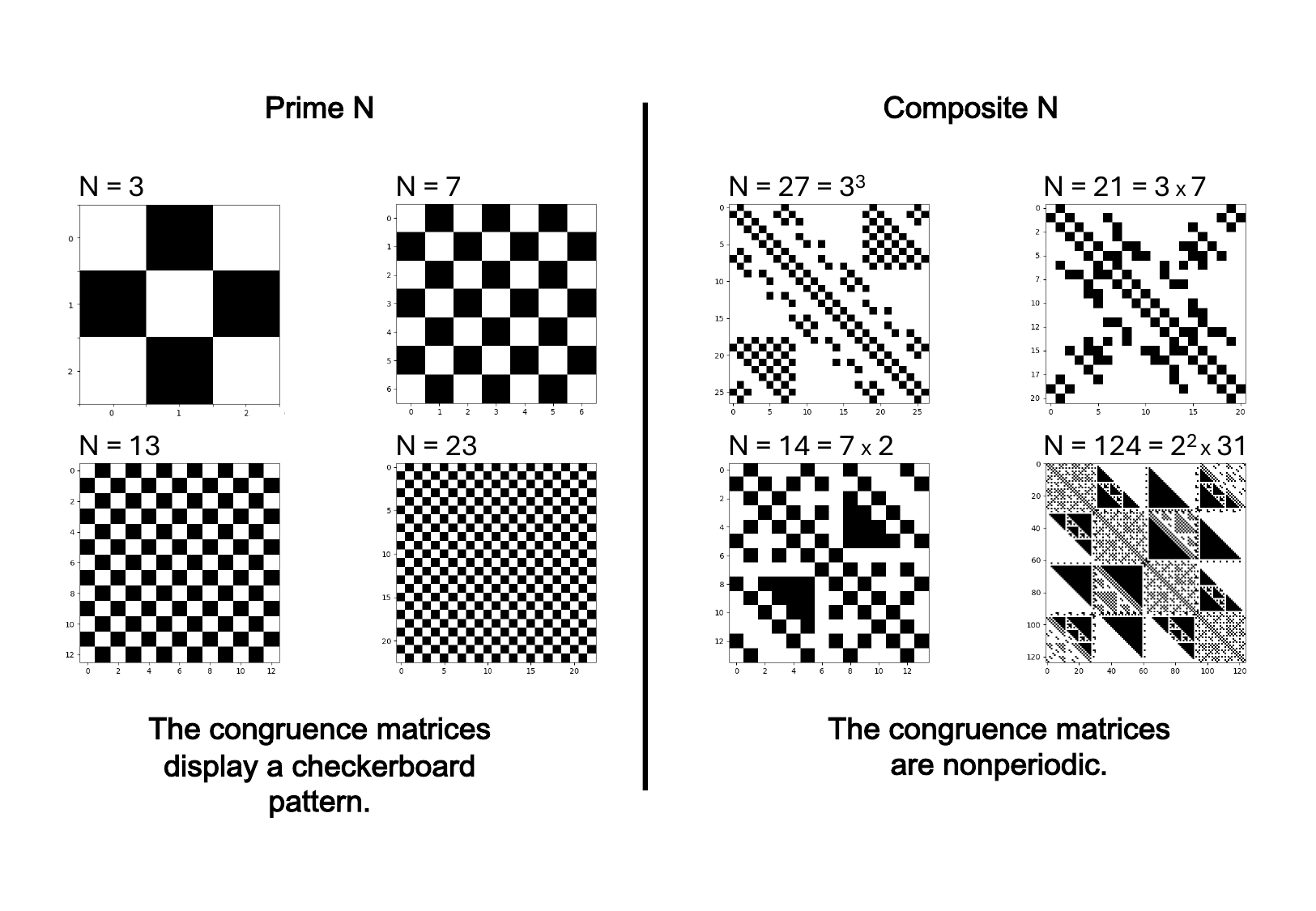}
  \caption{Representation of the congruence properties of the matrix
  $S^N$ for various values of $N$, with prime values on the left and
  composite values on the right. The Pascal tiling
  Theorem~\ref{theo:PTT} states that the congruence matrix $I^S_N$
  forms a perfect alternation of black and white cells (a Pascal
  tiling) if and only if $N$ is prime.}
  \label{fig:damier}
\end{figure}
The Pascal tiling theorem states that, for any  $N > 2$, the
congruence matrix $I^S_N$ forms a tiling 
consisting of a perfect alternation of white
($I^S_{N,i,j} = 1$) and black ($I^S_{N,i,j} = 0$) cells, if and only
if $N$ is prime. The dependence
of this alternation on the arrangement of entries in the $(i,j)$ index
grid is discussed in Section~\ref{sec:symmetry}.

\begin{theorem}[Pascal tiling theorem] Let $N > 2$. The following statements are equivalent: 
\begin{align}
(i)\quad& N \text{ is prime};\nonumber\\
(ii)\quad& \forall (i,j) \in \mathcal{D}_{j \leq i}:\; \1{S^N_{i,j}} = \frac{1+(-1)^{i+j}}{2};\label{eqn:PTT}\\
(iii)\quad& \forall (i,j) \in \mathcal{D}_{j \leq i}:\; \1{W^N_{i,j}} = \frac{1-(-1)^{i+j}}{2};\label{eqn:PTTmiror}\\
(iv)\quad& \forall (i,j) \in \mathcal{D}_{j \leq i}:\; \left|\1{S^N_{i,j}}-\1{W^N_{i,j}}\right| = 1. \label{eqn:PTTSW}
\end{align}
\label{theo:PTT}
\end{theorem}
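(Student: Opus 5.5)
The plan is to prove (i)$\Rightarrow$(ii),(iii),(iv) by one congruence between $A^N$ and $B^N$. Then for composite $N$ I will exhibit, for each of (ii), (iii), (iv) separately, an explicit pair $(i,j)$ at which it fails.

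\emph{Direct implication.} Since $(N-1-j)-(N-1-i)=i-j$, we have $B^N_{i,j}=\binom{N-1-j}{i-j}$. Let $N=p$ be an odd prime and put $k=i-j<p$. The binomial polynomial $\binom{x}{k}=x(x-1)\cdots(x-k+1)/k!$ has denominator $k!$, which is invertible mod $p$. Since $p-1-j\equiv -1-j \pmod p$, this gives
\[
B^p_{i,j}\equiv\binom{-1-j}{k}=(-1)^k\binom{j+k}{k}=(-1)^{i-j}\binom{i}{j}=(-1)^{i+j}A^p_{i,j}\pmod p .
\]
Hence $S^p_{i,j}\equiv(1+(-1)^{i+j})A^p_{i,j}$ and $W^p_{i,j}\equiv(1-(-1)^{i+j})A^p_{i,j}$. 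Because $0\le j\le i<p$, $A^p_{i,j}$ is a unit mod $p$, and since $p>2$ so is $2$. Therefore $S^p_{i,j}\equiv0$ exactly when $i+j$ is odd, and $W^p_{i,j}\equiv0$ exactly when $i+j$ is even. This is (ii) and (iii), and (iv) follows at once.

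\emph{Converse, squarefree part.} Let $N$ be composite with smallest prime factor $p$, and write $N=p^a m$ with $p\nmid m$. I look at column $j=0$, where $A^N_{i,0}=1$ and $B^N_{i,0}=\binom{N-1}{i}$. Expand $N-1$ in base $p$: its last digit is $p-1$, and its remaining part is $N/p-1$. By Lucas' theorem, $\binom{N-1}{i}\equiv(-1)^i \pmod p$ for $i<p$, and $\binom{N-1}{p}\equiv N/p-1\pmod p$.

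Take $i=p$ with $p$ odd, so $i+j$ is odd. Then $S^N_{p,0}\equiv N/p \pmod p$, while (ii) requires $S^N_{p,0}\equiv0 \pmod N$, hence mod $p$. This forces $p^2\mid N$. So (ii) fails whenever $p^2\nmid N$. The case $p=2$, i.e.\ $N$ even, I treat separately: there I use $i=2$, with the parity conditions interchanged, and compute $\binom{N-1}{2}$ mod $N$ directly. For (iii) and (iv), which involve non-vanishing conditions, I need an entry where $W$ vanishes with the wrong parity, or where $S$ and $W$ are simultaneously zero or simultaneously nonzero mod $N$. For this I will use the Pascal relations $B^N_{i,j}=B^N_{i,j+1}+B^N_{i+1,j+1}$ and $\binom{N-1}{p-1}+\binom{N-1}{p}=\binom{N}{p}$, together with the Kummer fact $v_p\binom{N}{p}=a-1<v_p(N)$. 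The aim is to show that the residues mod $N$ cannot alternate in the way (iii), resp.\ (iv), demands at both $(p-1,0)$ and $(p,0)$.

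\emph{Main obstacle.} The hardest part will be prime powers and, more generally, $p^2\mid N$. There the mod-$p$ information is blind and one must work mod $p^a$. For example, when $N=9$ one has $S^9_{3,0}=57\not\equiv0 \pmod 9$, so (ii) fails, yet $W^9_{3,0}$ is nonzero, consistent with (iii). So for each statement the witness $(i,j)$ must be chosen separately. My first attempt will be to use indices $i\in\{p^{a-1},\,p^{a-1}\pm1\}$, with general $j$ if needed. I will evaluate $\binom{N-1-j}{i-j}$ mod $p^a$ via Granville's extension of Lucas' theorem to prime powers, or via the explicit product $\prod_{t=1}^{k}(N-t)/t$, whose $p$-adic units I can control. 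The goal is to produce an entry with $S^N\equiv W^N\equiv0$, or with both nonzero, mod $N$ at the appropriate parity. This simultaneously refutes (iv), and it refutes (ii) or (iii) according to the parity of $i+j$.
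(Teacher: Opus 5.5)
Your direct implication is correct. It is the ``expansion of $\binom{p-a-1}{b}$'' route that the paper mentions as an alternative to its identity $\binom{N-1}{i}\binom{i}{j}=\binom{N-1}{j}\binom{N-1-j}{N-1-i}$. The converse, however, is not proved, and the gap is concrete.

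\textbf{Missing idea: work modulo $N$, not modulo $p$.} You need to compute $\binom{N-1}{p}$ modulo $N$; this is the paper's Proposition~\ref{prop:congruence}.
\begin{itemize}
\item For $0<k<p$, no factor of $k!$ shares a prime with $N$. Hence $N\mid\binom{N}{k}$, and inductively $\binom{N-1}{k}\equiv(-1)^k \pmod N$.
\item Next, $\binom{N}{p}=\frac{N}{p}\binom{N-1}{p-1}$, and $\binom{N-1}{p-1}=1+Hp$ by Lucas. So $\binom{N}{p}=\frac{N}{p}+HN\equiv N/p \pmod N$, whence $\binom{N-1}{p}\equiv N/p+(-1)^p \pmod N$.
\item For odd $p$ this gives $S^N_{p,0}\equiv N/p$ and $W^N_{p,0}\equiv 2-N/p$. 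Both are nonzero mod $N$ because $3\le p\le N/p<N$. This holds for every composite $N$, prime powers included; your own $57\equiv 3=9/3\pmod 9$ is an instance.
\end{itemize}
So (ii) and (iv) fail at $(p,0)$ with no need for Granville, Kummer, or indices near $p^{a-1}$. Your ``main obstacle'' only arises because you reduced mod $p$ before multiplying by $N/p$.

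\textbf{The rest of the plan also breaks at specific points.}
\begin{itemize}
\item \emph{(iii) with odd $p$.} Your stated aim is false. We have $W^N_{p-1,0}\equiv 1-(-1)^{p-1}=0$ at an even position and $W^N_{p,0}\equiv 2-N/p\not\equiv 0$ at an odd one, exactly as (iii) demands. So no argument confined to $(p-1,0)$ and $(p,0)$ can refute it. The paper uses $(p+1,0)$ instead. Every prime factor of the even number $p+1$ is smaller than $p$, so $\gcd(p+1,N)=1$. From $(p+1)\binom{N}{p+1}=N\binom{N-1}{p}$ one gets $N\mid\binom{N}{p+1}$. Then $W^N_{p+1,0}=1+\binom{N-1}{p}-\binom{N}{p+1}\equiv N/p\not\equiv 0$ at an even position.
\item \emph{(ii) with $p=2$.} Your witness $i=2$ does not refute (ii). With $N=2m$ one has $S^N_{2,0}=1+(2m-1)(m-1)\equiv m+2 \pmod{2m}$. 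This is nonzero for $m>2$, which is exactly what (ii) requires at an even position (e.g.\ $S^{10}_{2,0}=37$). That cell does refute (iii), since $W^N_{2,0}\equiv -m$, and it refutes (iv) for $N\ne 4$. For (ii) you need another mechanism, such as the paper's $\binom{2m-1}{m}=\binom{2m-1}{m-1}$. This forces $S^N_{m,0}=S^N_{m-1,0}$ at two cells of opposite parity.
\item \emph{(iv) with $N=4$.} Column $0$ satisfies (iv) throughout, so an off-column witness is unavoidable there. For example, at $(2,1)$ one has $S^4_{2,1}=4$ and $W^4_{2,1}=0$, both zero mod $4$.
\end{itemize}
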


In the following, we restrict to $(i,j) \in \mathcal{D}_{j \leq i}$ since the symmetry
$(i,j) \mapsto (j,i)$ is imposed by construction in Definition~\ref{def:SijWij}
of the matrices $S^N$ and $W^N$.

We now turn to the proof of this theorem.

\subsection{Preliminaries}

\begin{proposition}
Let $N > 2$ be a prime. Then
\begin{align}
\binom{i}{j}&\not\equiv 0 \mymod{N}\quad(0 \leq j \leq i \leq N{-}1),\label{prop:C}\\
\binom{N}{j}&\equiv 0 \mymod{N}\quad(1 \leq j \leq N-1),\label{prop:D}\\
\binom{N{-}1}{j}&\equiv (-1)^j \mymod{N}\quad(0 \leq j \leq N{-}1).\label{prop:N1j}
\end{align}
\end{proposition}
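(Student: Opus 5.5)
All three congruences follow from the factorial formula $\binom{i}{j} = \frac{i!}{j!\,(i-j)!}$ and Euclid's lemma: since $N$ is prime, $N$ divides a product of integers only if it divides one of the factors. I would treat the three statements in the order given, each building on the previous one.

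For (\ref{prop:C}), take $0 \le j \le i \le N-1$. The numerator $i!$ is a product of integers in $\{1,\dots,N-1\}$, none of which is divisible by $N$, so $N \nmid i!$ by Euclid's lemma. Since $i! = \binom{i}{j}\, j!\,(i-j)!$ and $N \nmid i!$, it follows that $N \nmid \binom{i}{j}$. This is the only point where primality is essential: for composite $N$ the product $i!$ may be divisible by $N$.

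For (\ref{prop:D}), take $1 \le j \le N-1$ and write $j!\,(N-j)!\,\binom{N}{j} = N!$. The right-hand side is divisible by $N$. On the left, $j!$ and $(N-j)!$ involve only factors in $\{1,\dots,N-1\}$, because $j \ge 1$ and $N-j \ge 1$, so $N$ divides neither of them. Euclid's lemma then forces $N \mid \binom{N}{j}$.

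For (\ref{prop:N1j}), I would argue by induction on $j$, using Pascal's rule $\binom{N}{j} = \binom{N-1}{j} + \binom{N-1}{j-1}$ for $1 \le j \le N-1$. The base case is $\binom{N-1}{0} = 1 = (-1)^0$. For the inductive step, (\ref{prop:D}) gives $\binom{N-1}{j} \equiv -\binom{N-1}{j-1} \pmod{N}$, which propagates the sign $(-1)^j$. A direct alternative is to write $\binom{N-1}{j} = \prod_{k=1}^{j} \frac{N-k}{k}$ and reduce each factor modulo $N$: since $N-k \equiv -k$ and $k$ is invertible modulo $N$, each factor is congruent to $-1$.

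No step poses a real obstacle. The only care needed is in (\ref{prop:N1j}): the division by $k$ must be justified through invertibility modulo $N$, which again uses primality, or the Pascal-rule induction should be used instead. That induction is the cleanest route because it relies only on (\ref{prop:D}). The hypothesis $N>2$ is not needed here; it is kept only for consistency with Theorem~\ref{theo:PTT}.
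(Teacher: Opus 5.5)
Your proof is correct: each of the three parts works as written, and you are right that $N>2$ is not needed. The paper does not prove this proposition. It calls the three congruences classical and cites Theorems~8 and~9 of Berg's book, so your proposal is a self-contained replacement for that citation, using only the factorial formula, Euclid's lemma and Pascal's rule.

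Your Pascal-rule induction for (\ref{prop:N1j}) is exactly how the paper later derives (\ref{S4}) from (\ref{S3}) in Proposition~\ref{prop:congruence}. Your proof of (\ref{prop:D}) follows the same idea as its proof of (\ref{S3}), with one instructive difference:
\begin{itemize}
\item The paper writes $\binom{N}{k}=N\cdot\frac{(N-1)\cdots(N-k+1)}{k!}$ and needs only $\gcd(k!,N)=1$. This still holds for composite $N$ whenever $k$ is below the smallest prime factor $q$.
\item Your version rests on $N$ being coprime to $j!\,(N-j)!$. This fails for every composite $N$ and every $j$, since one of $j$ and $N-j$ is at least $q$.
\end{itemize}
So your formulation is tied to the prime case, while the paper's is the one that carries over to its composite analysis.

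One small wording slip: you say primality is essential only in (\ref{prop:C}), but it is used in all three parts. Your own later remarks already acknowledge this.
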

\begin{proof}
These are classical results.  See for example Theorems 8 and 9 
in~\cite{berg_decouvertes_1993}.
\end{proof}

\begin{proposition}
Let $N = mq > 3$ be a composite, 
where $m > 1$ and $2 \leq q \leq m$ is the
smallest prime factor of $N$. Then the following statements hold:
\begin{align}
\binom{mq}{0}&= \binom{mq-1}{0} = 1,\label{S1}\\
\binom{mq}{q}&= m\binom{mq{-}1}{q{-}1},\label{S2}\\
\binom{mq}{q+1}&= \frac{mq}{q{+}1}\binom{mq{-}1}{q},\label{S2bis}\\
\binom{mq}{k}&\equiv 0 \mymod{mq}\quad\text{(for  $0<k<q$)},\label{S3}\\
\binom{mq{-}1}{k}&\equiv (-1)^k \mymod{mq}\quad\text{(for  $0<k<q$)},\label{S4}\\
\binom{mq}{q}&\equiv m \mymod{mq},\label{S5}\\
\binom{mq{-}1}{q}&\equiv m+(-1)^q \mymod{mq}.\label{S6}
\end{align}
\label{prop:congruence}
\end{proposition}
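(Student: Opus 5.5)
We prove each identity directly from the factorial definition and the absorption identity $\binom{n}{k}=\frac{n}{k}\binom{n-1}{k-1}$, together with the minimality of $q$. Statement (\ref{S1}) is immediate. For (\ref{S2}) we apply absorption with $n=mq$, $k=q$, which gives $\binom{mq}{q}=\frac{mq}{q}\binom{mq-1}{q-1}=m\binom{mq-1}{q-1}$. For (\ref{S2bis}) we use the same identity with $k=q+1$: $\binom{mq}{q+1}=\frac{mq}{q+1}\binom{mq-1}{q}$.

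For (\ref{S3}) we take $0<k<q$ and write $k\binom{mq}{k}=mq\binom{mq-1}{k-1}$. Every prime factor of $k$ is smaller than $q$, hence does not divide $N$, so $\gcd(k,N)=1$. It follows that $N\mid\binom{mq}{k}$. For (\ref{S4}) we argue by induction on $k$ using Pascal's rule $\binom{mq-1}{k}=\binom{mq}{k}-\binom{mq-1}{k-1}$. By (\ref{S3}) the first term vanishes modulo $N$ for $0<k<q$, so $\binom{mq-1}{k}\equiv-\binom{mq-1}{k-1}$. Starting from $\binom{mq-1}{0}=1$, this yields $(-1)^k$.

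For (\ref{S5}) we start from (\ref{S2}), which gives $\binom{mq}{q}=m\binom{mq-1}{q-1}$. By (\ref{S4}) with $k=q-1<q$, we have $\binom{mq-1}{q-1}=(-1)^{q-1}+tN$ for some integer $t$, so $\binom{mq}{q}\equiv m(-1)^{q-1}\pmod{mN}$, and in particular modulo $N$. This gives $m$ when $q$ is odd. When $q=2$ we get $-m$, and $-m\equiv m \pmod{2m}$, so the congruence holds in both cases. This sign bookkeeping, and having the congruence available modulo $N$ rather than only modulo a divisor, is the one step that needs care.

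Finally, (\ref{S6}) follows from Pascal's rule $\binom{mq-1}{q}=\binom{mq}{q}-\binom{mq-1}{q-1}$. By (\ref{S5}) and (\ref{S4}) the right-hand side is $\equiv m-(-1)^{q-1}=m+(-1)^q \pmod{N}$. The hypothesis $N>3$ guarantees $q\le m$, so that $q-1$ and $q$ are valid indices strictly inside the row $mq-1$. No deeper input is needed beyond the coprimality of $k$ and $N$ for $k<q$.
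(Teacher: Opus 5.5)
Your proof is correct. For (\ref{S1})--(\ref{S4}) and (\ref{S6}) it follows essentially the paper's route: absorption identities, coprimality of each $k<q$ with $N$, induction on Pascal's rule, and Pascal's rule again at the end. Your version of (\ref{S3}), via $k\binom{mq}{k}=mq\binom{mq-1}{k-1}$ with $\gcd(k,N)=1$ stated explicitly, is slightly more precise than the paper's wording, which speaks of $k$ not dividing $N$ when coprimality is what is needed.

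The genuine difference is in (\ref{S5}). The paper invokes Lucas's theorem to get $\binom{mq-1}{q-1}\equiv 1 \pmod{q}$ and then multiplies by $m$. You instead reuse (\ref{S4}) at $k=q-1$, getting $\binom{mq-1}{q-1}\equiv(-1)^{q-1}\pmod{N}$, and then remove the sign via $-m\equiv m\pmod{2m}$ when $q=2$. Your route is self-contained: it needs nothing beyond (\ref{S4}), which the paper itself relies on when proving (\ref{S6}). The paper's route gives the residue $1$ directly, with no parity split.

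Your remark that the congruence is needed modulo $N$ ``rather than only modulo a divisor'' overstates things. A congruence modulo $q$ already suffices, since multiplying by $m$ lifts it to modulus $mq$. Indeed, $(-1)^{q-1}\equiv 1\pmod q$ for every prime $q$, so reducing your (\ref{S4}) modulo $q$ recovers exactly the paper's Lucas congruence and removes your case split.

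Similarly, $q\le m$ follows from the minimality of $q$ together with $m>1$, not from $N>3$, which is automatic for composites. It is not actually needed for Pascal's rule to apply in (\ref{S6}).
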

\begin{proof}
Identities (\ref{S1}), (\ref{S2}) and (\ref{S2bis}) are immediate consequences of the 
usual properties of binomial coefficients.

To prove (\ref{S3}), we expand the factorial
form of binomial coefficients.
Since $q$ is the smallest prime factor of $N = mq$, none of the values $k<q$ is a divisor of $N$.
Thus, none of the factors in $k!$ is a divisor of the term $N$ that appears in the expression
\begin{equation*}
\binom{N}{k} = \frac{N!}{k!(N{-}k)!} = N\frac{(N{-}1)...(N{-}k{+}1)}{k!}.
\end{equation*}
$\binom{N}{k}$ is an integer, and thus the remaining term in the numerator must 
necessarily be divisible by $k!$.
It follows that for all $1 \leq k \leq q - 1$, $\binom{mq}{k} = N H$ for some 
integer $H$, and we obtain the desired expression.

By induction using (\ref{S3}) and (\ref{S1}),
we obtain (\ref{S4}),
\begin{align*}
\binom{mq{-}1}{k}& = \binom{mq}{k}-\binom{mq{-}1}{k-1}\\
& \equiv 0-(-1)^{k-1}\equiv (-1)^k \mymod{mq}
\end{align*}
for all $k \in \{1,\dots,q-1\}$.

Applying (\ref{S2}) together with Lucas's theorem yields (\ref{S5}), namely
$\binom{mq-1}{q-1} \equiv 1 \mymod{q}$.
Hence, there exists $H \in \mathbb{Z}$ such that $\binom{mq-1}{q-1} = 1 + Hq$, and
\begin{align*}
\binom{mq}{q}& = m\binom{mq-1}{q-1} = m(1+Hq) = m+Hmq\\
&\equiv m \mymod{mq}.
\end{align*}

Combining (\ref{S4}) and (\ref{S5}), we obtain (\ref{S6}),
\begin{align*}
\binom{mq{-}1}{q}& = \binom{mq}{q}-\binom{mq{-}1}{q{-}1}\\
&\equiv m-(-1)^{q-1} \mymod{mq}.\qedhere
\end{align*}
\end{proof}

\begin{proposition}
Let $N \geq 2$ and $(i,j) \in \mathcal{D}_{j \leq i}$. Then
\begin{equation*}
\binom{N{-}1}{i}\binom{i}{j} = \binom{N{-}1}{j}\binom{N{-}1{-}j}{N{-}1{-}i}.
\end{equation*}
\label{prop:compobin}
\end{proposition}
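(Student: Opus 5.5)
This identity is the standard ``subset-of-a-subset'' (trinomial revision) identity. I will prove it by writing both sides as the same multinomial coefficient. Since $(i,j)\in\mathcal{D}_{j\leq i}$, we have $0\leq j\leq i\leq N-1$. Hence every binomial coefficient involved has lower index between $0$ and its upper index, and the factorial formula applies to each without any boundary cases.

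First, I expand the left-hand side:
\begin{equation*}
\binom{N{-}1}{i}\binom{i}{j}=\frac{(N-1)!}{i!\,(N-1-i)!}\cdot\frac{i!}{j!\,(i-j)!}=\frac{(N-1)!}{j!\,(i-j)!\,(N-1-i)!}.
\end{equation*}
Next, I expand the right-hand side, using $(N-1-j)-(N-1-i)=i-j$:
\begin{equation*}
\binom{N{-}1}{j}\binom{N{-}1{-}j}{N{-}1{-}i}=\frac{(N-1)!}{j!\,(N-1-j)!}\cdot\frac{(N-1-j)!}{(N-1-i)!\,(i-j)!}=\frac{(N-1)!}{j!\,(i-j)!\,(N-1-i)!}.
\end{equation*}
The two expressions coincide, which proves the claim.

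As a sanity check, the same identity follows combinatorially. Both sides count the ways to split a set of $N-1$ elements into three labelled blocks of sizes $j$, $i-j$ and $N-1-i$. On the left, one first picks the block of size $i$ and then a $j$-subset inside it. On the right, one first picks the $j$-subset and then, among the remaining $N-1-j$ elements, the block of size $N-1-i$.

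There is no real obstacle here. The only point requiring care is checking the index ranges, so that no binomial coefficient has a negative argument; this is guaranteed by $0\leq j\leq i\leq N-1$.
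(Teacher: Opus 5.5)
Your proof is correct and matches the paper's approach. The paper justifies this identity only by appeal to the usual properties of binomial coefficients, and your factorial expansion of both sides to the common trinomial coefficient $\frac{(N-1)!}{j!\,(i-j)!\,(N-1-i)!}$ is the computation it implicitly relies on. The same factorization is used explicitly in the paper's proof of Proposition~\ref{prop:kmod}.
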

\begin{proof}
This result follows from the usual properties of binomial coefficients.
\end{proof}

\begin{proposition}
Let $N \geq 2$ be a prime and $(i,j) \in \mathcal{D}_{j \leq i}$. Then
\begin{equation*}
\binom{i}{j}-(-1)^{i+j}\binom{N{-}1{-}j}{N{-}1{-}i}\equiv 0 \mymod{N}.
\end{equation*}
\label{prop:compomod}
\end{proposition}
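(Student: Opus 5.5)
The plan is to combine Proposition~\ref{prop:compobin} with the congruences (\ref{prop:C}) and (\ref{prop:N1j}), which hold because $N$ is prime. The one exception is $N=2$, which (\ref{prop:C})--(\ref{prop:N1j}) exclude since they assume $N>2$; we treat it directly.

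First we fix $(i,j)\in\mathcal{D}_{j\leq i}$ and reduce the identity of Proposition~\ref{prop:compobin} modulo $N$. By (\ref{prop:N1j}) we have $\binom{N-1}{i}\equiv(-1)^i$ and $\binom{N-1}{j}\equiv(-1)^j \mymod{N}$. Hence
\begin{equation*}
(-1)^i\binom{i}{j}\equiv(-1)^j\binom{N{-}1{-}j}{N{-}1{-}i}\mymod{N}.
\end{equation*}
Next we multiply both sides by $(-1)^i$, which is its own inverse. Using $(-1)^{i+j}=(-1)^{j-i}$, this gives
\begin{equation*}
\binom{i}{j}\equiv(-1)^{i+j}\binom{N{-}1{-}j}{N{-}1{-}i}\mymod{N},
\end{equation*}
which is exactly the claim. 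No division is needed anywhere, so (\ref{prop:C}) is not even required. It would only be needed if one instead wanted to cancel a factor $\binom{N-1}{j}$, and multiplying by the unit $(-1)^i$ avoids that.

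For $N=2$ the argument above does not apply, so we check it separately. Every integer satisfies $x\equiv -x \mymod{2}$, so the sign $(-1)^{i+j}$ is irrelevant. It therefore suffices to check $\binom{i}{j}\equiv\binom{1-j}{1-i}\mymod{2}$ for $(i,j)\in\{(0,0),(1,0),(1,1)\}$. In each case both sides equal $1$, so the claim holds.

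There is no real obstacle. The only points needing care are checking that the index ranges $0\leq i,j\leq N-1$ fall within the hypotheses of (\ref{prop:N1j}), which they do, and handling $N=2$ separately as above.
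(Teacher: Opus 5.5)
Your proof is correct and follows the paper's argument: reduce the identity of Proposition~\ref{prop:compobin} modulo $N$ using $\binom{N-1}{i}\equiv(-1)^i$ and $\binom{N-1}{j}\equiv(-1)^j$, then multiply by the unit $(-1)^i$, and check $N=2$ directly. You are also right that no cancellation is involved, so the non-vanishing of $\binom{i}{j}$ modulo $N$ is never used.
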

\begin{proof}
The demonstration follows from (\ref{prop:N1j}) and Proposition~\ref{prop:compobin}, using
$\binom{N-1}{i} \equiv (-1)^i \mymod{N}$ and $\binom{N-1}{j} \equiv(-1)^j \mymod{N}$ 
for any odd prime $N$ and $0\leq j \leq N-1$. $N=2$ is trivial since $\binom{i}{j}=1$ for all 
$(i,j)\in\mathcal{D}_{j \leq i}$.

This argument is closely related to that of~\cite{cinkir_extension_2023}.
Another proof can be found in~\cite{rowland_lucas_2022} (Corollary~7).
A third proof, based on the expansion of
$\binom{p-a-1}{b} \mymod{p}$ for prime $p$, has also been suggested.
\end{proof}

\subsection{Proof of the Pascal tiling theorem}

The proof of Theorem~\ref{theo:PTT} relies on establishing the
corresponding implications when $N$ is prime
(Proposition~\ref{prop:PTTprime}), and on identifying at least one
pair $(i,j)$ for which they fail when $N$ is composite
(Proposition~\ref{prop:PTTnonprime}).

\begin{proposition}
Let $N > 2$ be a prime and $(i,j) \in \mathcal{D}_{j \leq i}$. Then
\begin{align}
\1{S^N_{i,j}}& = \frac{1+(-1)^{i+j}}{2},\label{PTTprime1}\\
\1{W^N_{i,j}}& = \frac{1-(-1)^{i+j}}{2},\label{PTTprime2}\\
\left|\1{S^N_{i,j}}-\1{W^N_{i,j}}\right|& = 1.\label{PTTprime3}
\end{align}
\label{prop:PTTprime}
\end{proposition}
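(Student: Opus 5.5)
The plan is to work modulo $N$ with $A^N_{i,j}=\binom{i}{j}$ and $B^N_{i,j}=\binom{N-1-j}{N-1-i}$. We combine Proposition~\ref{prop:compomod}, which links $A$ and $B$ through the sign $(-1)^{i+j}$, with (\ref{prop:C}), which says every binomial coefficient $\binom{i}{j}$ with $0\le j\le i\le N-1$ is nonzero modulo the prime $N$. Since $0\le j\le i\le N-1$, we have $0\le N-1-i\le N-1-j\le N-1$. So (\ref{prop:C}) applies to $B^N_{i,j}$ as well, and $A^N_{i,j}\not\equiv 0$ and $B^N_{i,j}\not\equiv 0 \mymod{N}$.

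We then split on the parity of $i+j$.

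\emph{Case $i+j$ even.} Proposition~\ref{prop:compomod} gives $A^N_{i,j}\equiv B^N_{i,j}$. Hence $W^N_{i,j}\equiv 0$ and $S^N_{i,j}\equiv 2A^N_{i,j} \mymod{N}$. Because $N>2$ is an odd prime, $2$ is invertible modulo $N$, and $A^N_{i,j}\not\equiv 0$. Therefore $S^N_{i,j}\not\equiv 0$. The indicators are $1$ for $S$ and $0$ for $W$, which matches $\frac{1+(-1)^{i+j}}{2}=1$ and $\frac{1-(-1)^{i+j}}{2}=0$.

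\emph{Case $i+j$ odd.} Proposition~\ref{prop:compomod} gives $A^N_{i,j}\equiv -B^N_{i,j}$. Hence $S^N_{i,j}\equiv 0$ and $W^N_{i,j}\equiv 2A^N_{i,j}\not\equiv 0 \mymod{N}$. The indicators are $0$ for $S$ and $1$ for $W$, again as claimed.

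These two cases establish (\ref{PTTprime1}) and (\ref{PTTprime2}). Then (\ref{PTTprime3}) follows at once, since in both cases the two indicators take the values $\frac{1\pm(-1)^{i+j}}{2}$, whose difference has absolute value $|(-1)^{i+j}|=1$.

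The only delicate points are checking that the index range keeps $B^N_{i,j}$ inside the scope of (\ref{prop:C}), and using $N\neq 2$ so that the factor $2$ cannot kill the sum or difference. This is exactly where the hypothesis $N>2$ enters.
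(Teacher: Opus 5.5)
Your proof is correct and follows essentially the same route as the paper. You use Proposition~\ref{prop:compomod} to reduce $S^N_{i,j}$ and $W^N_{i,j}$ modulo $N$ to $\bigl(1\pm(-1)^{i+j}\bigr)\binom{i}{j}$, then (\ref{prop:C}) together with the invertibility of $2$ for odd $N$, and (\ref{PTTprime3}) follows immediately from the first two identities. Your check that $B^N_{i,j}$ also falls within the scope of (\ref{prop:C}) is harmless but not needed, since you only use $2\binom{i}{j}\not\equiv 0 \mymod{N}$.
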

\begin{proof}
Identity (\ref{PTTprime1}) is a direct consequence of Proposition~\ref{prop:compomod}.
\begin{align*}
S^N_{i,j}& = \binom{i}{j}+\binom{N{-}1{-}j}{N{-}1{-}i}\\
&\equiv\left(1+(-1)^{i+j}\right)\binom{i}{j} \mymod{N}.
\end{align*}
Thus, $S^N_{i,j} \equiv 0 \mymod{N}$ when $i+j$ is odd.
By (\ref{prop:C}), $\binom{i}{j} \not\equiv 0 \mymod{N}$, and 
since $N > 2$, $S^N_{i,j} \equiv 2\binom{i}{j} \not\equiv 0 \mymod{N}$
when $i+j$ is even. Consequently,
$\1{S^N_{i,j}} = \frac{1}{2}\left(1 + (-1)^{i+j}\right)$.

The proof of 
(\ref{PTTprime2}) is identical,
except for the sign separating the two terms,
which results in a permutation of the congruence properties between
even and odd values of $i+j$.
A longer proof of 
(\ref{PTTprime1}) and
(\ref{PTTprime2})
can be obtained from
Proposition~\ref{prop:Srecurrence} (Appendix~\ref{sec:appendix}) and appears in a technical
report~\cite{enneith}.

Combining (\ref{PTTprime1}) and (\ref{PTTprime2}), 
we obtain (\ref{PTTprime3}).
For all $(i,j) \in \mathcal{D}_{j \leq i}$,
\begin{equation*}
\left|\1{S^N_{i,j}} - \1{W^N_{i,j}}\right|
= \left|\frac{1 + (-1)^{i+j}}{2}
      - \frac{1 - (-1)^{i+j}}{2}\right|
= \left|(-1)^{i+j}\right|
= 1.
\qedhere
\end{equation*}
\end{proof}

\begin{proposition}
Let $N = mq > 2$ be a composite, where $m > 1$ and $q$ is the
smallest prime factor of $N$.

\medskip

\noindent\textbf{(i) Suppose that $\mathbf q$ is odd} (i.e., $q \ge 3$, $N \ge 9$). Then
\begin{align}
\1{S^N_{q,0}}&~\ne~\frac{1+(-1)^q}{2},\label{eq:PTTnonprime1}\\
\1{W^N_{q+1,0}}&~\ne~\frac{1-(-1)^{q+1}}{2},\label{eq:PTTnonprime2}\\
\left|\1{S^N_{q,0}}-\1{W^N_{q,0}}\right|&~\ne~1.\label{eq:PTTnonprime3}
\end{align}

\medskip

\noindent\textbf{(ii) Suppose that $\mathbf q$ is even} (i.e., $q = 2$, $N \ge 4$). Then
\begin{align}
\1{S^N_{m-1,0}}\ne\frac{1+(-1)^{m-1}}{2} &\quad{\rm or}\quad \1{S^N_{m,0}}\ne\frac{1+(-1)^{m}}{2},\label{eq:PTTnonprime4}\\
\1{W^N_{m-1,0}}\ne\frac{1-(-1)^{m-1}}{2} &\quad{\rm or}\quad \1{W^N_{m,0}}\ne\frac{1-(-1)^{m}}{2},\label{eq:PTTnonprime5}\\
\left|\1{S^N_{q,0}}-\1{W^N_{q,0}}\right|&~\ne~1\quad({\mathbf N \ne 4}),\label{eq:PTTnonprime6}\\ 
\left|\1{S^4_{2,1}}-\1{W^4_{2,1}}\right|&~\ne~1\quad({\mathbf  N = 4}).\label{eq:PTTnonprime7}
\end{align}
\label{prop:PTTnonprime}
\end{proposition}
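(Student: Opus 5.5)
The plan is to evaluate $S^N$ and $W^N$ explicitly on the first column $j=0$, where everything reduces to a single binomial coefficient. Since $A^N_{i,0}=1$ and $B^N_{i,0}=\binom{N-1}{N-1-i}=\binom{N-1}{i}$, we have
\begin{equation*}
S^N_{i,0}=1+\binom{N-1}{i},\qquad W^N_{i,0}=1-\binom{N-1}{i}.
\end{equation*}
Each claim then becomes a statement about $\binom{mq-1}{i}$ modulo $mq$ for a well-chosen $i$. For these residues I will use Proposition~\ref{prop:congruence}, together with one extra congruence for $i=q+1$.

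\textbf{Case $q$ odd.} By (\ref{S6}) with $(-1)^q=-1$ we get $\binom{N-1}{q}\equiv m-1$. Hence
\begin{equation*}
S^N_{q,0}\equiv m,\qquad W^N_{q,0}\equiv 2-m \pmod{mq}.
\end{equation*}
Since $1<m<mq$, the entry $S^N_{q,0}$ is nonzero mod $N$, so its indicator is $1$, whereas $(1+(-1)^q)/2=0$; this gives (\ref{eq:PTTnonprime1}). Since $m\ge q\ge 3$ we have $0<m-2<mq$, so $W^N_{q,0}\not\equiv0$ as well. Both indicators therefore equal $1$ and their difference is $0$, which gives (\ref{eq:PTTnonprime3}).

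For (\ref{eq:PTTnonprime2}) I need $\binom{N-1}{q+1}$ mod $N$, and this is the one genuinely new step. Every prime factor of the even number $q+1$ is at most $(q+1)/2<q$, so $\gcd(q+1,N)=1$. From (\ref{S2bis}), $(q+1)\binom{N}{q+1}=N\binom{N-1}{q}$. Since $N\mid (q+1)\binom{N}{q+1}$ and $\gcd(q+1,N)=1$, we get $N\mid\binom{N}{q+1}$. Pascal's rule then gives
\begin{equation*}
\binom{N-1}{q+1}=\binom{N}{q+1}-\binom{N-1}{q}\equiv 1-m \pmod{N}.
\end{equation*}
Thus $W^N_{q+1,0}\equiv m\not\equiv 0$, so its indicator is $1$, while $(1-(-1)^{q+1})/2=0$.

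\textbf{Case $q=2$, $N=2m$.} For (\ref{eq:PTTnonprime4}) and (\ref{eq:PTTnonprime5}) I use the symmetry $\binom{2m-1}{m-1}=\binom{2m-1}{m}$. It gives $S^N_{m-1,0}=S^N_{m,0}$ and $W^N_{m-1,0}=W^N_{m,0}$, so the two indicators in each pair coincide. The prescribed values $\frac{1\pm(-1)^{m-1}}{2}$ and $\frac{1\pm(-1)^{m}}{2}$ differ, because $m-1$ and $m$ have opposite parity. Hence at least one equality fails in each pair.

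For (\ref{eq:PTTnonprime6}) I compute directly: $\binom{2m-1}{2}=(2m-1)(m-1)=2m^2-3m+1$. Reducing mod $2m$ gives
\begin{equation*}
S^N_{2,0}\equiv m+2,\qquad W^N_{2,0}\equiv m \pmod{2m}.
\end{equation*}
For $m\ne2$ (that is, $N\ne4$) both residues are nonzero, so both indicators are $1$ and the difference is $0$. For $N=4$ I check by hand: $S^4_{2,1}=\binom21+\binom21=4\equiv0$ and $W^4_{2,1}=0$. Both indicators are $0$, giving (\ref{eq:PTTnonprime7}).

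The only step that is not a direct application of Proposition~\ref{prop:congruence} is the congruence $\binom{N}{q+1}\equiv0 \pmod N$ for odd $q$. It rests on $q$ being the smallest prime factor of $N$, which forces $\gcd(q+1,N)=1$. Everything else is bookkeeping of residues and a check that they are nonzero mod $N$.
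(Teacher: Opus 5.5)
Your proposal is correct and follows the paper's proof essentially step for step. It uses the same entries $(q,0)$, $(q+1,0)$, $(m-1,0)$, $(m,0)$, $(2,0)$ and $(2,1)$ for $N=4$, the same residues from Proposition~\ref{prop:congruence}, and the same symmetry $\binom{2m-1}{m-1}=\binom{2m-1}{m}$; your $W^{2m}_{2,0}\equiv m$ agrees with the paper's $-m$ modulo $2m$. Your justification of $N\mid\binom{N}{q+1}$ through $\gcd(q+1,N)=1$, since every prime factor of $q+1$ is below $q$, is in fact more precise than the paper's remark that $q+1$ does not divide $mq$, which by itself would not suffice.
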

\begin{proof}
Using Definition~\ref{def:SijWij} of $S^N_{q,0}$ and
$W^N_{q,0}$, together with Proposition~\ref{prop:congruence}, we obtain
the following properties, valid whether $q$ is even
or odd.
\begin{align*}
S^{mq}_{q,0}& = \binom{q}{0}+\binom{mq{-}1}{q}
\quad\equiv\quad 1+m+(-1)^q
\mymod{mq},\\
W^{mq}_{q,0}& = \binom{q}{0}-\binom{mq{-}1}{q}
\quad\equiv\quad1-m-(-1)^q
\mymod{mq},\\
W^{mq}_{q+1,0}&= \binom{q{+}1}{0}-\binom{mq{-}1}{q{+}1}
\quad = \quad1+\binom{mq{-}1}{q}-\binom{mq}{q{+}1}\\
& =1+\binom{mq{-}1}{q}-\frac{mq}{q+1}\binom{mq{-}1}{q}.
\end{align*}

\medskip

We consider separately the cases where the smallest prime factor $q$ is odd or even.

\noindent\textbf{(i) Case $\mathbf{q}$ odd.}
Then $mq$ is also odd and $m > 2$. Thus,
\begin{equation*}
S^{mq}_{q,0} \equiv m \not\equiv 0 \mymod{mq},
\quad
W^{mq}_{q,0} \equiv 2 - m \not\equiv 0 \mymod{mq}.
\end{equation*}

Hence, (\ref{eq:PTTnonprime1}) holds, since
$\1{S^N_{q,0}} = 1$, whereas $\frac{1}{2}\left(1+(-1)^q\right) = 0$.

Similarly, (\ref{eq:PTTnonprime3}) follows from
$\1{S^N_{q,0}} = 1$ and $\1{W^N_{q,0}} = 1$.

Regarding (\ref{eq:PTTnonprime2}), 
$q+1$ is even and does not divide $mq$,
since otherwise $N$ must be even, and $q$ which is odd 
would not be the smallest prime factor of $N$.
In particular, $q+1$ divides $\binom{mq-1}{q}$, and according to (\ref{S2bis})
\begin{align*}
\binom{mq}{q+1}& = \frac{mq}{q+1}\binom{mq-1}{q} \equiv 0 \mymod{mq},\\
\vspace{-0.2cm}\\
W^{mq}_{q+1,0} &\equiv S^{mq}_{q,0} \equiv m\not\equiv 0 \mymod{mq}.
\end{align*}

Therefore, $\1{W^{N}_{q+1,0}} = 1$, whereas $\frac{1}{2}\left(1-(-1)^{q+1}\right) = 0$.

\medskip

\noindent\textbf{(ii) Case $\mathbf{q}$ even} (i.e., $q = 2$).
By symmetry of Pascal's triangle,
\begin{equation*}
\binom{2m-1}{m} = \binom{2m-1}{m-1},\quad
S^{2m}_{m,0} = S^{2m}_{m-1,0},\quad
W^{2m}_{m,0} = W^{2m}_{m-1,0}.
\end{equation*}

This gives (\ref{eq:PTTnonprime4}), since these values cannot
simultaneously equal $\frac{1}{2}\left(1+(-1)^m\right)$ and $\frac{1}{2}\left(1+(-1)^{m-1}\right)$.
The argument is similar for 
(\ref{eq:PTTnonprime5}).

The same argument as for (\ref{eq:PTTnonprime3}) yields (\ref{eq:PTTnonprime6}),
\begin{equation*}
S^{2m}_{2,0} \equiv 2 + m \mymod{2m},
\quad
W^{2m}_{2,0} \equiv -m \mymod{2m}.
\end{equation*}

Thus, if $m \ne 2$ (i.e., \textbf{case $\mathbf{N \ne 4}$}),
\begin{equation*}
\1{S^{2m}_{2,0}} = \1{W^{2m}_{2,0}} = 1.
\end{equation*}

The final \textbf{case $\mathbf{N = 4}$ yields} 
(\ref{eq:PTTnonprime7}), since 
\begin{equation*}
S^4_{2,1} = 2+2 \equiv 0 \mymod{4},
\quad
W^4_{2,1} = 2-2 \equiv 0 \mymod{4}.
\qedhere
\end{equation*}
\end{proof}

\section{Congruence properties of Lucas numbers and Fibonacci pseudoprimes}
\label{sec:fibonacci}
Given the connection between Pascal's triangle and the Fibonacci
and Lucas numbers, we use the Pascal tiling theorem to study their
congruence properties.
\subsection{Preliminaries}
\begin{definition}
The Fibonacci numbers are defined by the following sequence $F_0=0$, $F_1=1$, and $F_n=F_{n-1}+F_{n-2}$.
\label{def:Fn}
\end{definition}
They admit the following representation:
\begin{equation}
F_n=\sum_{k=0}^{\infty}\binom{n-1-k}{k}.
\label{eq:Fn}
\end{equation}

Since $\binom{n-1-k}{k} = 0$ whenever $k > n-1-k$, we consider separately the cases where $n$ is even or odd:

-- If $n~(=2m+1)$ is odd, there are $m+1$ nonzero terms in $F_{2m+1}$,
\begin{equation}
F_{2m+1}=\sum_{k=0}^{m}\binom{2m-k}{k}=\sum_{l=0}^{m}\binom{m+l}{m-l}.
\label{eq:Fodd}
\end{equation}

-- If $n~(=2m)$ is even, there are $m$ nonzero terms in $F_{2m}$,
\begin{equation}
F_{2m}=\sum_{k=0}^{m-1}\binom{2m-1-k}{k}=\sum_{l=0}^{m-1}\binom{m+l}{m-l-1}.
\label{eq:Feven}
\end{equation}

\begin{definition}
The Lucas numbers are defined by the following sequence $L_0=2$, $L_1=1$, and $L_n=L_{n-1} + L_{n-2}$.
\label{def:Ln}
\end{definition}

They are related to the Fibonacci numbers by several identities.
We mention here only one we will use later:
\begin{equation}
L_n = F_{n+1} + F_{n-1}
\label{eq:Ln}
\end{equation}

\begin{proposition}
Let $N > 2$. Then the Fibonacci number $F_N$ and the
Lucas number $L_N$ can be expressed as sums of $S^N_{i,j}$ or $W^N_{i,j}$, over pairs
$(i,j) \in \mathcal{D}_{j \leq i}$  such that $i+j=N-1$ and $i+j=N-2$,
respectively:
\begin{align}
\sum_{i+j=N-1}S^N_{i,j}&
= 2 F_N,\label{eq:PTTfibo}\\[4pt]
\sum_{i+j=N-2}S^N_{i,j}&=L_N-1,\label{eq:PTTlucas}\\
\sum_{i+j=N-2}W^N_{i,j}&=1-F_N.\label{eq:PTTfibobis}
\end{align}
\end{proposition}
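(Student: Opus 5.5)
The plan is to split each sum according to the definition $S^N_{i,j}=A^N_{i,j}\pm B^N_{i,j}$. Along each anti-diagonal we sum the $A$-part and the $B$-part separately, and we identify each part with one of the diagonal sums (\ref{eq:Fodd}) or (\ref{eq:Feven}). The sums in the statement run over $(i,j)\in\mathcal{D}_{j\le i}$ with $i+j$ fixed. For $i+j=N-1$ we therefore take $j=0,\dots,\lfloor (N-1)/2\rfloor$ with $i=N-1-j$. For $i+j=N-2$ we take $j=0,\dots,\lfloor (N-2)/2\rfloor$ with $i=N-2-j$.

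\emph{The $A$-part.} Along $i+j=N-1$ we have $\sum_j \binom{N-1-j}{j}$. By (\ref{eq:Fn}), including only the nonzero terms, this equals $F_N$. Along $i+j=N-2$ we get $\sum_j\binom{N-2-j}{j}=F_{N-1}$.

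\emph{The $B$-part.} This is the key observation. For $i=N-1-j$ we have $B^N_{i,j}=\binom{N-1-j}{N-1-i}=\binom{N-1-j}{j}=A^N_{i,j}$. Every term on the main anti-diagonal is thus doubled, which gives (\ref{eq:PTTfibo}), namely $2F_N$.

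For $i=N-2-j$ we have $B^N_{i,j}=\binom{N-1-j}{j+1}$. Substituting $k=j+1$ gives $\sum_{k=1}^{\lfloor N/2\rfloor}\binom{N-k}{k}$. This is $F_{N+1}-\binom{N}{0}=F_{N+1}-1$, using (\ref{eq:Fn}) with $n=N+1$.

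Here is the step needing care. We must check that the range $k\le\lfloor N/2\rfloor$ coming from $j\le i$ captures every nonzero term of $F_{N+1}$, and that none of the terms beyond the range is accidentally nonzero. The nonzero terms of $F_{N+1}$ are exactly those with $2k\le N$. The condition $j\le i$ reads $2j\le N-2$, that is $2k\le N$. The ranges therefore match, and I will verify this separately for $N$ even and $N$ odd using the split in (\ref{eq:Fodd}) and (\ref{eq:Feven}).

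\emph{Combining.} With these identities, the $S$-sum along $i+j=N-2$ is $F_{N-1}+F_{N+1}-1=L_N-1$ by (\ref{eq:Ln}), which is (\ref{eq:PTTlucas}). The $W$-sum is $F_{N-1}-(F_{N+1}-1)=1-(F_{N+1}-F_{N-1})=1-F_N$ by the Fibonacci recurrence, which is (\ref{eq:PTTfibobis}).

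The main obstacle is only the bookkeeping of summation limits, in particular the boundary term at $j=i$ when $N$ is even. In that case $i=j=(N-2)/2$, and the $B$-term is $\binom{N/2}{N/2}=1$, which must correspond to the last term $k=N/2$ of $F_{N+1}$. I will check this explicitly, along with a small case such as $N=5$: there $L_5-1=10$, and the entries are $S_{3,0}=1+4$, $S_{2,1}=2+3$, summing to $10$.
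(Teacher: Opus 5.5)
Your proposal is correct and follows essentially the same route as the paper. Both split $S^N$ and $W^N$ into their $A$- and $B$-parts and observe that $B=A$ on the anti-diagonal $i+j=N-1$. On $i+j=N-2$, both identify the $A$-part with $F_{N-1}$ and the shifted $B$-part with $F_{N+1}$ minus its missing $\binom{N}{0}$ term, then conclude with $L_N=F_{N+1}+F_{N-1}$ and $F_{N+1}-F_{N-1}=F_N$. Your explicit check that $j\le i$ becomes $2k\le N$ after $k=j+1$, matching exactly the nonzero range of $F_{N+1}$, states the summation limits more cleanly than the paper's wording, but it is the same argument.
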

\begin{proof}
According to Definition~\ref{def:SijWij},
\begin{align*}
\sum_{i+j=N-1}S^N_{i,j}
&=\sum_{j=N-1-i}\binom{N-1-j}{j}+\binom{N-1-j}{N-1-(N-1-j)}\\
&=2\sum_{j=N-1-i}\binom{N-1-j}{j}.
\end{align*}

The right-hand side coincides with 
(\ref{eq:Fn}). The conditions
$i+j=N-1$ and $(i,j) \in \mathcal{D}_{j \leq i}$ describe precisely the
index set used in 
(\ref{eq:Fodd}) and (\ref{eq:Feven}), for both odd
and even $N$, yielding 
(\ref{eq:PTTfibo}).

We now derive (\ref{eq:PTTlucas}) from the definition:
\begin{align*}
\sum_{i+j=N-2}S^N_{i,j}~
&=\sum_{j=N-2-i}\binom{N-2-j}{j}+\binom{N-1-j}{N-1-(N-2-j)}\\
&=
\sum_{j=(N-1)-1-i}\binom{(N-1)-1-j}{j}\\
&+\sum_{j+1=(N+1)-1-(i+1)}\binom{(N+1)-1-(j+1)}{j+1}.
\end{align*}
All terms in the first sum such that $N'=N-1$, $i+j=N'-1$ and $\binom{N'-1-j}{j}\ne 0$ belong to $\mathcal{D}_{j \leq i}$.
Hence, the first sum equals $F_{N-1}$ by Definition~\ref{def:Fn}. This is not the case for the second sum.
Let $N'=N+1$,  $j'=0$, then $i'+j'=N'-1$ implies $i'=N'-1=N$. Since $(N,0)\notin\mathcal{D}_{j \leq i}$, the term
$\binom{N'-1}{0}$ in 
(\ref{eq:Fn}) for $N'=N+1$ is missing. All other terms belong to $\mathcal{D}_{j \leq i}$.
From (\ref{eq:Ln}), we obtain 
\begin{equation*}
\sum_{i+j=N-2}S^N_{i,j}
=F_{N-1}+ F_{N+1}-\binom{N}{0}
=F_{N-1}+ F_{N+1}-1
=L_N-1.
\end{equation*}
Proof of \eqref{eq:PTTfibobis} is similar to proof of \eqref{eq:PTTlucas},
\begin{equation*}
\sum_{i+j=N-2}W^N_{i,j}
=F_{N-1}-F_{N+1}+\binom{N}{0}
=1-F_{N}.
\qedhere
\end{equation*}
\end{proof}

An example illustrating the links between the Lucas numbers and the terms $A^N_{i,j}$, $B^N_{i,j}$ and $S^N_{i,j}$ is provided in Table~\ref{tab:fibo-grid} for $N=11$.

\begin{theorem}
Let $N$ be prime. Then
$L_N-1 \equiv 0 \mymod{N}$.
\label{theo:LN}
\end{theorem}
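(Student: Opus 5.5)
The plan is to combine the diagonal-sum identity (\ref{eq:PTTlucas}) with the Pascal tiling theorem. By (\ref{eq:PTTlucas}),
\begin{equation*}
L_N-1=\sum_{i+j=N-2}S^N_{i,j},
\end{equation*}
where the sum runs over $(i,j)\in\mathcal{D}_{j \leq i}$ with $i+j=N-2$. It therefore suffices to show that each term $S^N_{i,j}$ on this anti-diagonal is congruent to $0 \mymod{N}$.

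First treat $N=2$ separately, since Theorem~\ref{theo:PTT} and Proposition~\ref{prop:PTTprime} assume $N>2$. Here $L_2-1=3-1=2\equiv 0 \mymod{2}$ by direct computation.

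Now let $N>2$ be prime, so $N$ is odd. For every pair in the sum, $i+j=N-2$ is odd and hence $(-1)^{i+j}=-1$. Proposition~\ref{prop:PTTprime}, identity (\ref{PTTprime1}), then gives $\1{S^N_{i,j}}=\frac{1+(-1)^{i+j}}{2}=0$, that is, $S^N_{i,j}\equiv 0 \mymod{N}$. One can also read this off Proposition~\ref{prop:compomod} directly: $S^N_{i,j}\equiv(1+(-1)^{i+j})\binom{i}{j}=0 \mymod{N}$.

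Summing over the anti-diagonal gives $L_N-1\equiv 0 \mymod{N}$. Note that each term vanishes individually, not just the total, which sets up the contrast with Fibonacci pseudoprimes drawn later.

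There is no real obstacle in this argument. The only points to check are the parity observation (that $N-2$ is odd for odd primes) and that the index set in (\ref{eq:PTTlucas}) lies within $\mathcal{D}_{j \leq i}$, which is exactly where Proposition~\ref{prop:PTTprime} applies; both hold by construction. The case $N=2$ is the only one needing separate handling.
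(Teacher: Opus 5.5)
Your proposal is correct and follows the same route as the paper's proof. You treat $N=2$ directly, and for odd prime $N$ you observe that $i+j=N-2$ is odd, so the Pascal tiling theorem (equivalently Proposition~\ref{prop:PTTprime}) makes every term of the diagonal sum \eqref{eq:PTTlucas} vanish modulo $N$.
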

\begin{proof}
Several proofs of this classical result are known. We show here how
the Pascal tiling theorem provides a straightforward proof.

The case $N=2$ is immediate. Assume now that $N > 2$ is prime.
Then both $N$ and $N-2$ are odd. According to (\ref{eqn:PTT}) of
the Pascal tiling theorem, all the terms $S^N_{i,j}$ with $i+j = N-2$
appearing in (\ref{eq:PTTlucas}) satisfy
$S^N_{i,j} \equiv 0 \mymod{N}$. 
Hence, the sum is congruent to $0$ modulo $N$.
\end{proof}

The converse of Theorem~\ref{theo:LN} is false. Fibonacci
pseudoprimes are composite $N$ such that
$L_N \equiv 1 \mymod{N}$. As established in previous
works~\cite{di_porto_nonexistence_1993,somer_even_1991}, all such
numbers are odd.
The Pascal tiling theorem shows that the congruence
$\sum_{i+j=N-2} S^N_{i,j} \equiv 0 \mymod{N}$ for prime $N$ actually holds
term by term. For Fibonacci pseudoprimes, the sum modulo $N$ is also zero,
but at least one of these terms is nonzero, as will be shown.

\subsection{Fibonacci pseudoprimes}
Regardless of the particular case of Fibonacci pseudoprimes, the
study of the congruences of the $S^N_{i,j}$ with $i+j = N-2$
shows that, for large values of composite $N$ 
with few prime factors, many of the $S^N_{N-2-j,j}$ are 
congruent to $0$ modulo $N$.
The following proposition provides an explanation for this observation.
\begin{proposition}
Let $N > 2$ and $(i,j) \in \mathcal{D}_{j \leq i}$ with
$i+j = N-2$. Then
\begin{align}
S^N_{N-2-j,j}&=\frac{N}{j+1}\binom{N-2-j}{j},\label{eqn:fibozero0}\\[4pt]
\gcd(j{+}1,N)=1&\Longrightarrow S^N_{N{-}2{-}j,j}\equiv 0 \mymod{N},\label{eqn:fibozero1}\\[4pt]
L_N-1&\equiv\sum_{\substack{0 \leq j \leq N{-}2\\ \gcd(j{+}1,N)\ne1}}S^N_{N{-}2{-}j,j} \mymod{N}.\label{eqn:fibozero2}
\end{align}
\label{prop:fibozero}
\end{proposition}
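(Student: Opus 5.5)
We start from Definition~\ref{def:SijWij} with $i=N-2-j$. Then
\begin{equation*}
S^N_{N-2-j,j}=\binom{N-2-j}{j}+\binom{N-1-j}{N-1-(N-2-j)}=\binom{N-2-j}{j}+\binom{N-1-j}{j+1}.
\end{equation*}
We rewrite the second term with the absorption identity $\binom{n}{k+1}=\frac{n-k}{k+1}\binom{n}{k}$. Taking $n=N-1-j$ and $k=j$, this gives $\binom{N-1-j}{j+1}=\frac{N-1-2j}{j+1}\binom{N-1-j}{j}$. We also have $\binom{N-1-j}{j}=\frac{N-1-j}{N-1-2j}\binom{N-2-j}{j}$, provided $N-1-2j\neq 0$. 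Combining the two, and avoiding the division when $N-1-2j=0$, we get $\binom{N-1-j}{j+1}=\frac{N-1-j}{j+1}\binom{N-2-j}{j}$. This can also be checked directly from factorials: $\frac{(N-1-j)!}{(j+1)!(N-2-2j)!}$. Hence
\begin{equation*}
S^N_{N-2-j,j}=\left(1+\frac{N-1-j}{j+1}\right)\binom{N-2-j}{j}=\frac{N}{j+1}\binom{N-2-j}{j},
\end{equation*}
which is (\ref{eqn:fibozero0}). The index range causes no trouble: $j\le i$ means $2j\le N-2$, so every factorial argument is nonnegative.

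For (\ref{eqn:fibozero1}) we use (\ref{eqn:fibozero0}) in the integer form $(j+1)\,S^N_{N-2-j,j}=N\binom{N-2-j}{j}$. So $N$ divides $(j+1)S^N_{N-2-j,j}$. When $\gcd(j+1,N)=1$, Gauss's lemma gives $N\mid S^N_{N-2-j,j}$. This is the same argument as in the proof of (\ref{S3}).

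For (\ref{eqn:fibozero2}) we apply (\ref{eq:PTTlucas}), which expresses $L_N-1$ as the sum of $S^N_{N-2-j,j}$ over $(i,j)\in\mathcal{D}_{j\le i}$ with $i+j=N-2$. Reducing modulo $N$ and dropping the terms with $\gcd(j+1,N)=1$, which vanish by (\ref{eqn:fibozero1}), leaves the stated congruence.

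One point about the range of summation needs care. The statement writes $0\le j\le N-2$, while the sum in (\ref{eq:PTTlucas}) runs only over $j\le i$, that is $j\le (N-2)/2$. For $j>(N-2)/2$ we have $N-2-j<j$, so $\binom{N-2-j}{j}=0$. If we read $S^N_{N-2-j,j}$ through formula (\ref{eqn:fibozero0}), these extra terms are zero and the extension is harmless. However, the symmetric extension $S^N_{j,i}=S^N_{i,j}$ of Definition~\ref{def:SijWij} would double-count them, so the proof must state the intended reading explicitly. We expect this bookkeeping to be the main obstacle; the algebra itself is routine.
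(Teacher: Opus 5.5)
Your proof is correct and follows the paper's argument. You use the same identity $\binom{N-1-j}{j+1}=\frac{N-1-j}{j+1}\binom{N-2-j}{j}$ to obtain (\ref{eqn:fibozero0}). Your coprimality step, applying Gauss's lemma to $N\mid (j+1)S^N_{N-2-j,j}$, is equivalent to the paper's deduction that $(j+1)\mid\binom{N-2-j}{j}$. You then restrict (\ref{eq:PTTlucas}) exactly as the paper does. Your remark on the summation range is a fair point that the paper leaves implicit: the sum in (\ref{eqn:fibozero2}) must be read over $j\le (N-2)/2$, or through the unsymmetrized formula where the extra terms vanish, since under the symmetric extension the off-diagonal terms would be counted twice.
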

\begin{proof}
Let $(i,j) \in \mathcal{D}_{j \leq i}$ be such that $i+j = N-2$. Then
\begin{align*}
S^N_{N-2-j,j}&=\binom{N-2-j}{j}+\binom{N-1-j}{j+1}
=\left(1+\frac{N-1-j}{j+1}\right)\binom{N-2-j}{j}\\
&=\frac{N}{j+1}\binom{N-2-j}{j}.
\end{align*}
Since $\gcd(j+1,N)=1$, none of the prime divisors of $j+1$ divides $N$.
Thus, $j+1$ divides $\binom{N-2-j}{j}$,
and hence, 
$S^N_{N-2-j,j} \equiv 0 \mymod{N}$, proving (\ref{eqn:fibozero1}).

Restricting (\ref{eq:PTTlucas}) to the nonzero terms using (\ref{eqn:fibozero1}) 
gives (\ref{eqn:fibozero2}).
\end{proof}

This result is consistent with the Pascal tiling theorem,
since for a prime $N$, none of the $j+1$ with
$0 \leq j \leq N-2$ divides $N$.

\begin{theorem}
Let $N$ be a Fibonacci pseudoprime. Then there exists
$(i,j) \in \mathcal{D}_{j \leq i}$ with $i+j = N-2$
such that $S^N_{i,j}\not\equiv 0 \mymod{N}$.
\label{theo:pseudoprime}
\end{theorem}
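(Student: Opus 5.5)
Since a Fibonacci pseudoprime $N$ is composite and odd (by the results cited from~\cite{di_porto_nonexistence_1993,somer_even_1991}), we may write $N=mq$ with $q\ge 3$ the smallest prime factor of $N$ and $m\ge q\ge 3$. The plan is to exhibit one explicit index on the anti-diagonal $i+j=N-2$ whose entry is nonzero modulo $N$, using the closed form (\ref{eqn:fibozero0}) of Proposition~\ref{prop:fibozero}. The natural candidate is $j=q-1$: it is the first index with $\gcd(j+1,N)\neq 1$, and it satisfies $j\le i=N-1-q$ because $N\ge 9$.

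For $j=q-1$, formula (\ref{eqn:fibozero0}) gives
\begin{equation*}
S^N_{N-1-q,\,q-1}=\frac{mq}{q}\binom{mq-1-q}{q-1}=m\binom{mq-1-q}{q-1}.
\end{equation*}
This is divisible by $N=mq$ if and only if $q$ divides $\binom{mq-1-q}{q-1}$. So it suffices to show $\binom{mq-1-q}{q-1}\not\equiv 0 \mymod{q}$. Write $mq-1-q=(m-2)q+(q-1)$. The base-$q$ last digit of this number is $q-1$, and the last digit of $q-1$ is also $q-1$, with no higher digits. Lucas's theorem then gives
\begin{equation*}
\binom{mq-1-q}{q-1}\equiv \binom{m-2}{0}\binom{q-1}{q-1}=1 \mymod{q}.
\end{equation*}
Hence $S^N_{N-1-q,q-1}\equiv m\cdot(1+Hq)\equiv m\not\equiv 0 \mymod{mq}$, which is the same computation used for (\ref{S5}) in Proposition~\ref{prop:congruence}. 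This proves the claim.

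The only delicate point is checking the hypotheses. We need that pseudoprimes are odd, so that $q\ge 3$. In fact the argument seems to work even for $q=2$ as long as the index constraint holds, but relying on oddness keeps things clean. We also need that $(N-1-q,q-1)$ lies in $\mathcal{D}_{j\le i}$, which requires $2q\le N$ and holds since $m\ge 2$. Notice that the pseudoprime property $L_N\equiv1 \mymod{N}$ is not actually used beyond compositeness. This is consistent with (\ref{eqn:fibozero2}): the sum vanishes modulo $N$ even though this particular term does not.
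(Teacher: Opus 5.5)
Your proof is correct and is essentially the paper's own: it uses the same index $(i_q,j_q)=((m-1)q-1,\,q-1)$, the closed form (\ref{eqn:fibozero0}), and Lucas's theorem to obtain $S^{mq}_{i_q,j_q}\equiv m\not\equiv 0 \mymod{mq}$. The only difference is that the paper first rewrites $\binom{(m-1)q-1}{q-1}=\binom{(m-1)q-1}{(m-2)q}$ before applying Lucas, which is inessential; and, as you suspected, the paper notes that the argument works for any prime factor $q$ of any composite $N$, so the oddness of Fibonacci pseudoprimes is not needed.
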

\begin{proof}
The proof is not restricted to Fibonacci pseudoprimes, but applies to all composite $N$.
Let $N = mq$ be a composite number with $q$ one of the prime factors of $N$, and define
\begin{equation*}
i_q = (m-1)q-1, \qquad
j_q = q-1.
\end{equation*}
Since $N = mq$ is composite and $q$ is a prime factor of $N$, we have
$q \geq 2$ and $m \geq 2$.
Using (\ref{eqn:fibozero0}),
\begin{align*}
S^{mq}_{(m-1)q-1,q-1}&=\frac{mq}{q}\binom{mq-2-q+1}{q-1}=m\binom{(m-1)q-1}{q-1}\\
&=m\binom{(m-1)q-1}{(m-2)q}=m\binom{(m-2)q+q-1}{(m-2)q}.
\end{align*}
By Lucas's theorem, $\binom{(m-2)q+q-1}{(m-2)q}\equiv 1 \mymod{q}$, so 
there exists $H$ such that $S^{mq}_{i_q,j_q}=m(1+Hq)\equiv m \mymod{mq}$.
This completes the proof.

Another proof is based on $i'_q = (N+q)/2 - 1$ and
$j'_q = (N-q)/2 - 1$, where $N$ is still a composite number,
$q$ is a prime factor of $N$ (odd or even), and $m = N/q$
is odd.
Under these conditions, which hold for all Fibonacci pseudoprimes,
it can be proved as shown in Appendix~\ref{sec:appendix},
Proposition~\ref{prop:composite}, that
$S^{mq}_{i'_q,j'_q} \equiv m \not\equiv 0 \mymod{mq}$.
Since $i'_q + j'_q = N - 2$, this term contributes another
nonzero term to the expression for $L_N - 1$.
\end{proof}

Thus, if $N$ is a Fibonacci pseudoprime, the sum of the terms $S^N_{N-2-j,j}$ 
appearing in the computation of $L_N - 1$
in (\ref{eqn:fibozero2}) is congruent to $0$ modulo $N$, 
while at least one term is nonzero. 
Thus, at least one other term must cancel it, and hence there are at least 
two nonzero terms modulo $N$.

Since the proofs of $S^{mq}_{i_q,j_q}\equiv S^{mq}_{i'_q,j'_q}\equiv m \mymod{mq}$ 
are true for any prime factor 
of a Fibonacci pseudoprime $N$, the number of
nonzero terms in the sum in (\ref{eqn:fibozero2})
is bounded below by twice the number of its prime factors when all $(i_q,j_q)$ 
and $(i'_q,j'_q)$ have distinct values.\footnote{
For an even composite, the lower bound on the number of nonzero terms with 
$i+j=N-2$ is given by the number of prime factors, plus those primes $q$ 
for which $m=N/q$ is odd, still corrected by duplicated values.}
Table~\ref{tab:fibo705} lists the values of $S_{N-2-j,j}$ for 
$N = 705 = 3 \times 5 \times 47$, the smallest Fibonacci
pseudoprime.

Without restricting to Fibonacci pseudoprimes, one can further analyze 
the congruence properties of $L_N - 1$ modulo each prime factor 
of a composite $N$, and bound the number of nonzero terms as well 
as the number of distinct values of $S^N_{N-2-j,j} \mymod{N}$ 
using Proposition~\ref{prop:fibozero}. A detailed study of 
these aspects is left for future work.

\section{Canonical decompositions and multinomial extensions}
\label{sec:multinomial}
\subsection{Decomposition of binomial coefficients}
\label{subsec:decomposition}
\begin{proposition}
Let $0 \leq j \leq i$, and let $p>2$ be a prime such that $p>i$.
Then the binomial coefficient $\binom{i}{j}$ can be written as
the weighted sum
\begin{equation*}
\binom{i}{j}=\frac{S^p_{i,j}+W^p_{i,j}}{2},
\end{equation*}
where exactly one of the two terms is divisible by $p$.
\end{proposition}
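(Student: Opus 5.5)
The identity is immediate from Definition~\ref{def:SijWij}, and the divisibility claim follows by applying Proposition~\ref{prop:PTTprime} with $N=p$. Since $p>i\geq j$, the pair $(i,j)$ lies in $\mathcal{D}_{j\leq i}$ for $N=p$, so $S^p_{i,j}$ and $W^p_{i,j}$ are defined by the formulas of Definition~\ref{def:SijWij}.

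\textbf{Identity.} Adding the defining relations gives
\begin{equation*}
S^p_{i,j}+W^p_{i,j}=\bigl(A^p_{i,j}+B^p_{i,j}\bigr)+\bigl(A^p_{i,j}-B^p_{i,j}\bigr)=2A^p_{i,j}=2\binom{i}{j}.
\end{equation*}
Dividing by $2$ yields the stated weighted sum. This step uses nothing beyond the definition and holds for any $N>i$.

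\textbf{Divisibility.} Since $p>2$ is prime, Proposition~\ref{prop:PTTprime} applies with $N=p$. By (\ref{PTTprime3}), exactly one of the indicators $\1{S^p_{i,j}}$ and $\1{W^p_{i,j}}$ equals $0$. So exactly one of $S^p_{i,j}$, $W^p_{i,j}$ is divisible by $p$.

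More explicitly, (\ref{PTTprime1})--(\ref{PTTprime2}) identify which term it is:
\begin{itemize}
\item if $i+j$ is odd, then $p\mid S^p_{i,j}$ and $W^p_{i,j}\equiv 2\binom{i}{j}\not\equiv0 \mymod{p}$;
\item if $i+j$ is even, then $p\mid W^p_{i,j}$ and $S^p_{i,j}\equiv 2\binom{i}{j}\not\equiv0 \mymod{p}$.
\end{itemize}
This case split can also be checked directly from Proposition~\ref{prop:compomod} together with (\ref{prop:C}).

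There is no real obstacle. The only points that need care are the hypotheses. The condition $p>i$ is what places $(i,j)$ in the admissible index set. The condition $p>2$ guarantees that $2\binom{i}{j}\not\equiv 0 \mymod{p}$, so the term that is not divisible by $p$ is genuinely nonzero modulo $p$.
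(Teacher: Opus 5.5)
Your proposal is correct and follows the paper's argument. The identity comes directly from Definition~\ref{def:SijWij}, and the divisibility claim comes from the prime direction of the Pascal tiling theorem; you cite Proposition~\ref{prop:PTTprime}, which is exactly that direction.
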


\begin{proof}
From Definition~\ref{def:SijWij}, for all $0 \leq j \leq i$ and $N>i$, we have 
$\binom{i}{j}=\frac{1}{2}\left(S^N_{i,j}+W^N_{i,j}\right)$. 
Exactly one of the two terms is divisible by $p$, as a consequence of the 
Pascal tiling theorem (Theorem~\ref{theo:PTT}).
\end{proof}

This decomposition is not equivalent to Euclidean division. By construction, $S^N_{i,j}>0$, whereas $W^N_{i,j}$ need not be and there is no guarantee that the term not congruent to $0$ modulo $p$ is bounded by $p$.

To determine which term is congruent to $0$ modulo $p$, independently of the indices considered, one may derive an alternative expression that explicitly separates the terms congruent to $0$ modulo $p$ from those that are not.

\begin{definition}
Let $N \geq 2$, and let $(i,j) \in \mathcal{D}_{j\leq i}$.
The entries $X^N_{i,j}$ and $Y^N_{i,j}$ are
given by
\begin{align}
X^N_{i,j}&=\frac{1-(-1)^{i+j}}{2}S^N_{i,j}+\frac{1+(-1)^{i+j}}{2}W^N_{i,j},
\label{Xij}\\
Y^N_{i,j}&=\frac{1+(-1)^{i+j}}{2}S^N_{i,j}+\frac{1-(-1)^{i+j}}{2}W^N_{i,j}.
\label{Yij}
\end{align}
\label{def:XijYijfun}
We denote by $(X^N,Y^N)$ the corresponding decomposition and by $\mathcal{F}_z$ the parametric 
linear map that transforms
$(S^N_{i,j},W^N_{i,j})$ to $(X^N_{i,j},Y^N_{i,j})$ for $z=i+j$.  
\end{definition}

It can be checked that $(S^N_{i,j},W^N_{i,j})=\mathcal{F}_{i+j}\left(X^N_{i,j},Y^N_{i,j}\right)$, and simply that
$\mathcal{F}_z\circ \mathcal{F}_z = \mathrm{Id}$.

The Pascal tiling theorem (Theorem~\ref{theo:PTT}) can then be extended to this decomposition.
\begin{theorem}
Let $N > 2$. 
The following statements are equivalent:
\begin{align}
(i)\quad&N \text{ is prime}; \nonumber\\
(ii)\quad&\forall (i,j) \in \mathcal{D}_{j \leq i}:\; \1{X^N_{i,j}} = 0; \label{eqn:PTTX}\\
(iii)\quad&\forall (i,j) \in \mathcal{D}_{j \leq i}:\; \left|\1{X^N_{i,j}} - \1{Y^N_{i,j}}\right| = 1. \label{eqn:PTTXY}
\end{align}

\medskip
Moreover, if $N$ is prime, then
\begin{align}
& \forall (i,j) \in \mathcal{D}_{j \leq i}:\; \1{Y^N_{i,j}} = 1. \label{eqn:PTTY}
\end{align}
\label{theo:PTTXY}
\end{theorem}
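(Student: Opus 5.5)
The plan is to reduce everything to Theorem~\ref{theo:PTT} and Proposition~\ref{prop:PTTnonprime}. The key observation is that, by Definition~\ref{def:XijYijfun}, the map $\mathcal{F}_{i+j}$ only permutes the two entries: $X^N_{i,j}=W^N_{i,j}$ and $Y^N_{i,j}=S^N_{i,j}$ when $i+j$ is even, while $X^N_{i,j}=S^N_{i,j}$ and $Y^N_{i,j}=W^N_{i,j}$ when $i+j$ is odd. In particular $\{X^N_{i,j},Y^N_{i,j}\}=\{S^N_{i,j},W^N_{i,j}\}$ as a multiset. Hence
\begin{equation*}
\left|\1{X^N_{i,j}}-\1{Y^N_{i,j}}\right|=\left|\1{S^N_{i,j}}-\1{W^N_{i,j}}\right|
\end{equation*}
for every $(i,j)\in\mathcal{D}_{j\leq i}$. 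Statement (iii) is therefore literally statement (iv) of Theorem~\ref{theo:PTT}, so (i)$\Leftrightarrow$(iii) follows immediately from that theorem.

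For (i)$\Rightarrow$(ii) and the additional claim (\ref{eqn:PTTY}), I will use Proposition~\ref{prop:PTTprime}. When $N$ is prime, (\ref{PTTprime1}) gives $S^N_{i,j}\equiv 0$ exactly when $i+j$ is odd, and (\ref{PTTprime2}) gives $W^N_{i,j}\equiv 0$ exactly when $i+j$ is even. By the case description above, $X^N_{i,j}$ is always the entry that vanishes modulo $N$, so $\1{X^N_{i,j}}=0$. Likewise $Y^N_{i,j}$ is always the entry that does not vanish, so $\1{Y^N_{i,j}}=1$.

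For (ii)$\Rightarrow$(i), I argue by contraposition and exhibit, for composite $N=mq$ with $q$ the smallest prime factor, one index pair where $X^N$ is nonzero modulo $N$.
\begin{itemize}
\item If $q$ is odd, take $(q,0)$. Here $i+j=q$ is odd, so $X^N_{q,0}=S^N_{q,0}\equiv m\not\equiv 0 \mymod{N}$, as computed in the proof of Proposition~\ref{prop:PTTnonprime}.
\item If $q=2$, take $(2,0)$. Here $i+j$ is even, so $X^N_{2,0}=W^N_{2,0}\equiv -m \mymod{2m}$, which is nonzero since $m\geq 2$. This covers $N=4$ as well, since $W^4_{2,0}\equiv -2\not\equiv 0 \mymod{4}$.
\end{itemize}

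I do not expect a real obstacle. The only point needing care is the bookkeeping of which of $S^N$ and $W^N$ becomes $X^N$ for each parity, and checking that the witnesses from Proposition~\ref{prop:PTTnonprime} land in the parity class where they feed $X^N$ rather than $Y^N$. The remark that (\ref{eqn:PTTY}) holds only in one direction is consistent with this picture: the converse of (\ref{eqn:PTTY}) is not claimed and need not be proved.
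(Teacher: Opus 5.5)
Your proof is correct. It differs from the paper's proof mainly in packaging: you reduce Theorem~\ref{theo:PTTXY} to Theorem~\ref{theo:PTT}, while the paper argues directly.

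The paper never uses the fact that $\mathcal{F}_{i+j}$ is just a parity-controlled swap of $S^N_{i,j}$ and $W^N_{i,j}$. It rewrites $X^N_{i,j}$ and $Y^N_{i,j}$ in the binomial form (\ref{XijBinome})--(\ref{YijBinome}). It then applies Proposition~\ref{prop:compomod} directly to get $X^N_{i,j}\equiv 0$ and $Y^N_{i,j}\equiv 2\binom{i}{j}\not\equiv 0 \mymod{N}$ for prime $N$. Finally it recomputes explicit composite counterexamples in $X/Y$ form:
\begin{itemize}
\item $X^{mq}_{q,0}\equiv m$ and $Y^{mq}_{q,0}\equiv 2-m$ for odd $q$;
\item $X^{2m}_{2m-1,0}=2$ for (ii) when $q=2$;
\item the pair $(q,0)$ for (iii) when $q=2$ and $N\neq 4$, and $(2,1)$ when $N=4$.
\end{itemize}

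Your observation that (iii) is pointwise identical to statement (iv) of Theorem~\ref{theo:PTT} gives all of the (iii) counterexamples at once, including the special case $N=4$. It also explains why the paper's counterexamples coincide with those of Proposition~\ref{prop:PTTnonprime}. You then need a fresh witness only for (ii). Your choice $(2,0)$ for even $N$, with $X^{2m}_{2,0}=W^{2m}_{2,0}\equiv -m\not\equiv 0 \mymod{2m}$, works as well as the paper's $(2m-1,0)$.

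The paper's direct route has a different advantage. It is self-contained at the level of binomial identities and does not lean on Theorem~\ref{theo:PTT}. That is the form the paper later carries over to the multinomial pairs $(X^{N,r}_{\vec{k}},Y^{N,r}_{\vec{k}})$, where no earlier $S/W$-type theorem exists to reduce to.
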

\begin{proof}
Expanding Definitions~\ref{def:XijYijfun} and~\ref{def:SijWij}, we obtain
\begin{align}
X^N_{i,j}&=
\binom{i}{j}-(-1)^{i+j}\binom{N-1-j}{N-1-i},\label{XijBinome}\\
Y^N_{i,j}&=
\binom{i}{j}+(-1)^{i+j}\binom{N-1-j}{N-1-i}.\label{YijBinome}
\end{align}

Using Proposition~\ref{prop:compomod} together with 
(\ref{XijBinome}), 
if $N$ is prime, $X^N_{i,j}\equiv 0 \mymod{N}$,
which gives (\ref{eqn:PTTX}).

Since $N > 2$ is prime, Proposition~\ref{prop:compomod},
(\ref{prop:C}) and (\ref{YijBinome}) imply that
$Y^N_{i,j} \equiv 2\binom{i}{j} \not\equiv 0 \mymod{N}$.
This gives (\ref{eqn:PTTY}) for odd prime.

Combining (\ref{eqn:PTTX}) and (\ref{eqn:PTTY}), we obtain (\ref{eqn:PTTXY}).

Some of the counterexamples for composite $N=mq$ 
where $q$ is the smallest prime factor of $N$
are the same as those used in the
proof of Proposition~\ref{prop:PTTnonprime}:

\medskip 

\noindent\textbf{(i) Case $\mathbf{q}$ odd.}
\begin{align*}
X^{mq}_{q,0}&\equiv 1-(-1)^q\left(m+(-1)^q\right)\equiv m\not\equiv 0 \mymod{mq},\\
Y^{mq}_{q,0}&\equiv 1+(-1)^q\left(m+(-1)^q\right)\equiv 2-m\not\equiv 0 \mymod{mq}.
\end{align*}
This shows that \ref{eqn:PTTX}) and (\ref{eqn:PTTXY}) fail for odd $q$. 

\medskip 

\noindent\textbf{(ii) Case $\mathbf{q}$ even.}
\begin{align*}
X^{2m}_{2m-1,0}&=\binom{2m-1}{0}-(-1)^{2m-1}\binom{2m-1-0}{2m-1-2m+1}=2\not\equiv 0 \mymod{2m},\\
Y^{2m}_{2m-1,0}&=\binom{2m-1}{0}+(-1)^{2m-1}\binom{2m-1-0}{2m-1-2m+1}=0\equiv 0 \mymod{2m},\\
X^{2m}_{q,0}&\equiv -m \mymod{2m},\quad
Y^{2m}_{q,0}\equiv 2+m \mymod{2m},\quad
X^4_{2,1}\equiv Y^4_{2,1}\equiv 0 \mymod{4}.
\end{align*}

This shows that (\ref{eqn:PTTX}) and (\ref{eqn:PTTXY}) fail for even $q$.

The converse of \eqref{eqn:PTTY} does not hold.
Counterexamples of composite $N$ such that $Y^N_{i,j}\not\equiv 0 \mymod{N}$ for all $(i,j)\in\mathcal{D}_{j \leq i}$
are prime powers of odd primes.
Table~\ref{tab:swxy-grid-9} shows it for $N=3^2$, and a proof for all such $N$ is given in
Appendix~\ref{sec:primepowers}.
\end{proof}

Tables~\ref{tab:swxy-grid-7} and~\ref{tab:swxy-grid-10} illustrate the different decompositions and 
the congruence properties, respectively for $N=7$ and $N=10$.

\begin{proposition}
Let $0 \leq j \leq i$, and let $p>2$ be a prime such that $p>i$.
Then the binomial coefficients $\binom{i}{j}$ and
$\binom{p-1-j}{p-1-i}$ can be decomposed in terms of $X^p_{i,j}$ and
$Y^p_{i,j}$ as follows:
\begin{align*}
\binom{i}{j}&=\frac{1}{2}\left(p~\frac{X^p_{i,j}}{p}+Y^p_{i,j}\right),\\
\binom{p-1-j}{p-1-i}&=\frac{(-1)^{i+j}}{2}\left(-p~\frac{X^p_{i,j}}{p}+Y^p_{i,j}\right).
\end{align*}
\end{proposition}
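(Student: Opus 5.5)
The plan is to read both identities directly off the closed forms (\ref{XijBinome}) and (\ref{YijBinome}) established in the proof of Theorem~\ref{theo:PTTXY}. Those forms came only from expanding Definitions~\ref{def:XijYijfun} and~\ref{def:SijWij}, so they hold for every $N$. First I will check that the entries are well defined. Since $p>i\geq j\geq 0$, the pair $(i,j)$ lies in $\mathcal{D}_{j\leq i}$ for $N=p$, so $X^p_{i,j}$ and $Y^p_{i,j}$ make sense. Specializing to $N=p$ gives
\begin{equation*}
X^p_{i,j}=\binom{i}{j}-(-1)^{i+j}\binom{p-1-j}{p-1-i},\qquad
Y^p_{i,j}=\binom{i}{j}+(-1)^{i+j}\binom{p-1-j}{p-1-i}.
\end{equation*}

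This is a $2\times 2$ linear system in the two binomial coefficients, and I will solve it. Adding the two equations and dividing by $2$ gives $\binom{i}{j}=\frac{1}{2}\left(X^p_{i,j}+Y^p_{i,j}\right)$. Subtracting the first from the second gives $2(-1)^{i+j}\binom{p-1-j}{p-1-i}=Y^p_{i,j}-X^p_{i,j}$. Multiplying by $(-1)^{i+j}/2$ and using $(-1)^{2(i+j)}=1$ yields
\begin{equation*}
\binom{p-1-j}{p-1-i}=\frac{(-1)^{i+j}}{2}\left(-X^p_{i,j}+Y^p_{i,j}\right).
\end{equation*}
Equivalently, this is just the involution $\mathcal{F}_{i+j}$ applied to $(X^p_{i,j},Y^p_{i,j})$ to recover $(S^p_{i,j},W^p_{i,j})$, combined with $A=(S+W)/2$ and $B=(S-W)/2$.

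Finally, I will rewrite $X^p_{i,j}$ as $p\cdot\frac{X^p_{i,j}}{p}$. The point of this form is that $X^p_{i,j}/p$ is an integer: by Proposition~\ref{prop:compomod} (equivalently (\ref{eqn:PTTX}) of Theorem~\ref{theo:PTTXY}), $X^p_{i,j}\equiv 0 \mymod{p}$ for prime $p$. I will also note that $Y^p_{i,j}\not\equiv 0 \mymod{p}$ by (\ref{eqn:PTTY}), since $p>2$. This makes the decomposition explicit: one summand is a multiple of $p$ and the other is not, and which is which no longer depends on the parity of $i+j$.

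There is no real obstacle here. The only care needed is to carry the sign $(-1)^{i+j}$ correctly when solving for the second coefficient, and to justify that $X^p_{i,j}/p$ is an integer, which the earlier propositions already supply.
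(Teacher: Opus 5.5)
Your proposal is correct and takes the same route as the paper, which only says that the decompositions follow immediately from Definition~\ref{def:XijYijfun} and Theorem~\ref{theo:PTTXY}. You spell out the two steps that sentence leaves implicit: solving the $2\times 2$ system given by (\ref{XijBinome})--(\ref{YijBinome}), and noting that $X^p_{i,j}/p$ is an integer by (\ref{eqn:PTTX}).
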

\begin{proof}
The decompositions follow immediately from Definition~\ref{def:XijYijfun} and Theorem~\ref{theo:PTTXY}.
\end{proof}

\begin{proposition}
Let $N > 2$ and $(i,j) \in \mathcal{D}_{j\leq i}$. Then
the sequences $X^N_{i,j}$ and $Y^N_{i,j}$ satisfy the same recurrence
as the binomial coefficients in Pascal's triangle,
\[\binom{i}{j}=\binom{i-1}{j-1}+\binom{i-1}{j},\] 
but with different initial conditions. Namely,
\begin{align}
X^N_{i,j} &= X^N_{i-1,j-1} + X^N_{i-1,j}, \label{eqn:decompXYrec1}\\
Y^N_{i,j} &= Y^N_{i-1,j-1} + Y^N_{i-1,j}.\label{eqn:decompXYrec2}
\end{align}

The initial conditions are given by
\begin{align}
X^N_{i,0} &= 1 - \binom{i-N}{i},\quad X^N_{i,i}=0,\\
Y^N_{i,0} &= 1 + \binom{i-N}{i}, \quad Y^N_{i,i}=2.
\end{align}

Here $i-N<0$, and the binomial coefficient with negative upper index is defined 
for $n \in \N^*$ and $m \in \N$, by
\[\binom{-n}{m}=(-1)^m\binom{n+m-1}{m}.\]
\label{prop:decompXYrec}
\end{proposition}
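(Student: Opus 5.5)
The plan is to work from the closed forms (\ref{XijBinome}) and (\ref{YijBinome}) established in the proof of Theorem~\ref{theo:PTTXY}:
\[
X^N_{i,j}=\binom{i}{j}-(-1)^{i+j}\binom{N-1-j}{N-1-i},\qquad
Y^N_{i,j}=\binom{i}{j}+(-1)^{i+j}\binom{N-1-j}{N-1-i}.
\]
Write $\epsilon=\pm1$ for the sign distinguishing $X$ ($\epsilon=-1$) from $Y$ ($\epsilon=+1$), so that both cases are treated at once. Pascal's rule applies directly to the first term $\binom{i}{j}$. For the second term, set $T_{i,j}=(-1)^{i+j}\binom{N-1-j}{N-1-i}$. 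It then suffices to show $T_{i,j}=T_{i-1,j-1}+T_{i-1,j}$ for $1\le j\le i-1$, since the recurrences (\ref{eqn:decompXYrec1}) and (\ref{eqn:decompXYrec2}) are only meaningful when both predecessors lie in $\mathcal{D}_{j\le i}$.

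For the interior identity, observe that $T_{i-1,j-1}=(-1)^{i+j}\binom{N-j}{N-i}$ and $T_{i-1,j}=-(-1)^{i+j}\binom{N-1-j}{N-i}$. Set $a=N-1-j$ and $b=N-i$. The claim then reduces to
\[
\binom{a}{b-1}=\binom{a+1}{b}-\binom{a}{b},
\]
which is exactly Pascal's rule $\binom{a+1}{b}=\binom{a}{b}+\binom{a}{b-1}$. Thus $T$, and hence $X^N=\binom{i}{j}-T$ and $Y^N=\binom{i}{j}+T$, satisfy the Pascal recurrence.

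For the boundary values, take $j=i$. Then $T_{i,i}=\binom{N-1-i}{N-1-i}=1$ and $\binom{i}{i}=1$, so $X^N_{i,i}=0$ and $Y^N_{i,i}=2$. Next take $j=0$. Then $T_{i,0}=(-1)^i\binom{N-1}{N-1-i}=(-1)^i\binom{N-1}{i}$. By the given convention with $n=N-i\ge1$ and $m=i$, we have $\binom{i-N}{i}=(-1)^i\binom{N-i+i-1}{i}=(-1)^i\binom{N-1}{i}$. Hence $X^N_{i,0}=1-\binom{i-N}{i}$ and $Y^N_{i,0}=1+\binom{i-N}{i}$.

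The only delicate point is the index bookkeeping at the edges. One must check that the interior recurrence is used only when $1\le j\le i-1\le N-2$, so that all lower indices are nonnegative and the binomial coefficients are the usual ones. One must also check that the negative-upper-index convention matches the $j=0$ boundary exactly as stated. Beyond this, no modular arithmetic is needed, and the result holds for every $N>2$.
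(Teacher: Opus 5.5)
Your proposal is correct and follows essentially the same route as the paper: you apply Pascal's rule to $\binom{i}{j}$ and, in the form $B^N_{i,j}=B^N_{i-1,j-1}-B^N_{i-1,j}$, to the complementary coefficient. You then track the sign $(-1)^{i+j}$ and read off the edge values using the negative-upper-index convention. Restricting the interior recurrence to $1\le j\le i-1$ is slightly more careful than the paper's ``$i\ge1$, $j\ge1$'', which at $j=i$ relies implicitly on $\binom{i-1}{i}=\binom{N-1-i}{N-i}=0$.
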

\begin{proof}
From (\ref{XijBinome}), (\ref{YijBinome}), and Definition~\ref{def:SijWij}, 
for all $(i,j) \in \mathcal{D}_{j\leq i}$ with  $i \geq 1$ and $j \geq 1$,
\begin{align*}
X^N_{i,j}&=A^N_{i,j}-(-1)^{i+j}B^N_{i,j}\\
&=A^N_{i-1,j-1}+A^N_{i-1,j}-(-1)^{i+j}\left(B^N_{i-1,j-1}-B^N_{i-1,j}\right)\\
&=A^N_{i-1,j-1}-(-1)^{i-1+j-1}B^N_{i-1,j-1}+
A^N_{i-1,j}-(-1)^{i-1+j}B^N_{i-1,j}\\
&=X^N_{i-1,j-1}+X^N_{i-1,j}.
\end{align*}

To apply this recurrence, we need the explicit values of $X^N_{i,i}=0$ and $X^N_{i,0}$:
\begin{equation*}
X^N_{i,0}=\binom{i}{0}-(-1)^i\binom{N-1}{N-1-i}=1-(-1)^i\binom{N-1}{i}=1-\binom{i-N}{i}.
\end{equation*}
This establishes (\ref{eqn:decompXYrec1}).
The proof of (\ref{eqn:decompXYrec2}) is identical.
\end{proof}

Proposition~\ref{prop:decompXYrec} shows that the
decomposition $(X^N,Y^N)$ has the advantage of admitting a recursive
definition of the terms $X^N_{i,j}$ and $Y^N_{i,j}$ independently,
whereas $S^N_{i,j}$ and $W^N_{i,j}$ depend on each other in this
formulation as shown in Proposition~\ref{prop:Srecurrence}
(Appendix~\ref{sec:appendix}).

Both decompositions $(X^N,Y^N)$ and $(S^N,W^N)$ are related to the
symmetry $(i,j)\mapsto(N-1-j,N-1-i)$. Other symmetries have been
identified, such as $(i,j)\mapsto(N-1-i+j,j)$~\cite{rowland_lucas_2022,
cinkir_extension_2023}, and can be used to derive
further decompositions.

The underlying question is how many symmetries of this type exist and
whether the Pascal tiling theorem extends to them. To address this,
we relate our results to properties of multinomial coefficients and
to higher-dimensional generalizations of Pascal's triangle, namely
Pascal simplices.

\subsection{$M$-dimensional Pascal simplices}

We now move from the binomial indexing $(i,j)$ to the multinomial
indexing $\vec{k}=(k_1,\dots,k_M)$, where $K=\sum_{l=1}^M k_l$.
This allows us to extend the previous constructions to the multinomial setting.

\begin{definition}
The multinomial coefficient of order $M$ is defined for $k_l \ge 0$ with $\sum_l k_l = K$ by
\begin{equation*}
\binom{K}{k_{1},k_{2},k_{3},\ldots ,k_{M}}=\binom{K}{{\vec {k}}}={\frac {K!}{k_{1}!k_{2}!k_{3}!\dots k_M!}}
={\frac {K!}{\prod _{l=1}^Mk_{l}!}}.
\end{equation*}
The usual binomial coefficient corresponds to the case $M=2$, with $k_1=j$ and $k_2=i-j$,
$$\binom{i}{j}=\frac{i!}{j!(i-j)!}=\binom{i}{j,i-j}.$$

\label{def:multinome}
\end{definition}

There is no ambiguity in the definitions and notations: 
the presence of a comma or a vector in the lower coefficient always refers to the multinomial 
coefficient, whereas its absence refers to the usual binomial coefficient.

We now show how invariances at order $M+1$ yield the $M$ congruence symmetries
that arise in an M-dimensional Pascal simplex.
We then deduce a generalization of the Pascal tiling theorem to Pascal's simplices
and conclude with a discussion of these results.
\begin{proposition}
Let $M \ge 2$, let $N\ge2$ be a prime and  let $\mathcal{D}_M$ denote the set of vectors
$\vec{k}=(k_1,\dots,k_M)$ such that $k_l\ge 0$, $\sum_{l=1}^M k_l=K$ and $0\leq K\leq N-1$. 
Then, for all $\vec{k} \in \mathcal{D}_M$ and all $r \in \{1,\dots,M\}$, we have 
\begin{equation*}
\binom{K}{\vec{k}}\equiv(-1)^{K+k_r}\binom{N-1-k_r}{k_1,\dots,k_{r-1},N-1-K,k_{r+1},\dots,k_M} \mymod{N}.
\end{equation*}
\label{prop:kmod}
\end{proposition}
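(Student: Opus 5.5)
The plan is to mimic the proof of Proposition~\ref{prop:compomod}: find an exact identity of the type of Proposition~\ref{prop:compobin} in which both multinomial coefficients appear, multiplied by binomial factors whose upper index is $N-1$. Then reduce those factors modulo $N$ with (\ref{prop:N1j}).

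Fix $r$ and write $\vec{k}'$ for the vector obtained from $\vec{k}$ by replacing $k_r$ with $N-1-K$. The entries of $\vec{k}'$ sum to $K-k_r+(N-1-K)=N-1-k_r$, so the right-hand coefficient is well defined and all of its entries are nonnegative because $K\le N-1$. The key observation is the factorial identity
\begin{equation*}
\binom{N-1}{K}\binom{K}{\vec{k}}=\frac{(N-1)!}{(N-1-K)!\,\prod_{l}k_l!}=\binom{N-1}{k_r}\binom{N-1-k_r}{\vec{k}'}.
\end{equation*}
Both sides equal the order-$(M+1)$ multinomial $\binom{N-1}{k_1,\dots,k_M,N-1-K}$. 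To see this, group either the last part together with the others, or $k_r$ alone against the rest. This is the ``invariance at order $M+1$'' announced before the statement, and it reduces to Proposition~\ref{prop:compobin} when $M=2$.

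Next, reduce modulo the prime $N$. Since $0\le K\le N-1$ and $0\le k_r\le N-1$, (\ref{prop:N1j}) gives
\begin{equation*}
\binom{N-1}{K}\equiv(-1)^K \quad\text{and}\quad \binom{N-1}{k_r}\equiv(-1)^{k_r} \mymod{N}.
\end{equation*}
The stated range in (\ref{prop:N1j}) assumes $N>2$, but the congruence $\binom{N-1}{j}\equiv(-1)^j$ also holds trivially for $N=2$, since $-1\equiv1 \mymod 2$. Substituting, we get $(-1)^K\binom{K}{\vec{k}}\equiv(-1)^{k_r}\binom{N-1-k_r}{\vec{k}'} \mymod{N}$. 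Multiplying by $(-1)^K$ and using $(-1)^{2K}=1$ yields exactly the claimed congruence with sign $(-1)^{K+k_r}$.

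The only real point of care is the bookkeeping in the identity: one must check that the entries of $\vec{k}'$ are nonnegative and sum correctly, and that the position $r$ is replaced rather than appended. No inversion modulo $N$ is needed, because we only multiply by $(-1)^K$. I expect no genuine obstacle beyond this verification.
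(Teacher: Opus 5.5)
Your proposal is correct and follows the paper's proof essentially step for step. The paper also writes the order-$(M+1)$ multinomial $\binom{N-1}{k_1,\dots,k_M,N-1-K}$ both as $\binom{N-1}{k_r}\binom{N-1-k_r}{\dots}$ and as $\binom{N-1}{K}\binom{K}{\vec{k}}$, reduces the two $\binom{N-1}{\cdot}$ factors with (\ref{prop:N1j}) (treating $N=2$ separately), and multiplies by $(-1)^K$. Your ``replaced rather than appended'' caveat is immaterial, since a multinomial coefficient is symmetric in its lower entries; the paper in fact appends $N-1-K$ at the end.
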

\begin{proof}
Consider the Pascal simplex obtained by introducing the index $k_{M+1}=N-1-K$, 
so that $\sum_{l=1}^{M+1} k_l = N-1$. 

Using Definition~\ref{def:multinome}, we obtain
\begin{align*}
\binom{N-1}{k_1,\dots,k_M,N-1-K}
&=\binom{N-1}{k_r}\binom{N-1-k_r}{k_1,\dots,k_{r-1},k_{r+1},\dots,k_M,N-1-K}\\
&=\binom{N-1}{N-1-K}\binom{N-1-(N-1-K)}{k_1,\dots,k_M}\\
&=\binom{N-1}{K}\binom{K}{k_1,\dots,k_M}.
\end{align*}

Rearranging the terms, and 
using (\ref{prop:N1j}), we obtain
$\binom{N-1}{k_l}\equiv (-1)^{k_l} \mymod{N}$ for odd prime. 
Case $N=2$ is trivial since $K=0$ or $K=1$, and $-1\equiv 1\mymod{2}$. 
Multiplying by $(-1)^K$ yields the desired result for all primes.
\end{proof}

This proposition and its proof reduce to Propositions~\ref{prop:compomod} and~\ref{prop:compobin} when $M=2$, $k_1=j$, $k_2=i-j$ and $r=1$.

\begin{definition}
Let $N \ge 2$, $M \ge 2$, and let $r \in \{1,\dots,M\}$.
We define a map that assigns to each vector $\vec{k} \in \mathcal{D}_M$ the 
multinomial coefficient of order $M$ given by 
\begin{equation*}
T^{N,r}_{\vec{k}}=\binom{N-1-k_r}{k_1,\dots,k_{r-1},N-1-K,k_{r+1},\dots,k_M}.
\end{equation*}
\label{def:Tnr}
\end{definition}

For $N=K+1+k_r$, $T^{N,r}_{\vec{k}}$ coincides with the multinomial coefficient $\binom{K}{\vec{k}}$.

\begin{definition}
For all $N \ge 2$, $M \geq 2$, $\vec{k} \in \mathcal{D}_M$, and
$r \in \{1,\dots,M\}$, we define
\begin{align*}
X^{N,r}_{\vec{k}}&=\binom{K}{\vec{k}}-(-1)^{K+k_r}~T^{N,r}_{\vec{k}},\\
Y^{N,r}_{\vec{k}}&=\binom{K}{\vec{k}}+(-1)^{K+k_r}~T^{N,r}_{\vec{k}}.
\end{align*}
\label{def:XYnr}
\end{definition}

The notation is unambiguous (see the discussion above). 
In particular, $X^{N,r}_{\vec{k}}\ne X^N_{i,j}$.

\subsection{Generalized Pascal tiling theorem}

\begin{theorem}
Let $N > 2$, $M \geq 2$, and $r \in \{1,\dots,M\}$.
The following statements are equivalent:
\begin{align}
(i)\quad&N \text{ is prime};\nonumber\\
(ii)\quad&\forall \vec{k} \in \mathcal{D}_M:\;
\1{X^{N,r}_{\vec{k}}}=0;\label{eqn:PTTmultinomialX}\\
(iii)\quad&\forall \vec{k} \in \mathcal{D}_M:\;
\left|\1{X^{N,r}_{\vec{k}}}-\1{Y^{N,r}_{\vec{k}}}\right|=1.\label{eqn:PTTmultinomialXY}
\end{align}

Moreover, if $N$ is prime, then
\begin{align}
&\forall \vec{k} \in \mathcal{D}_M:\;
\1{Y^{N,r}_{\vec{k}}}=1.\label{eqn:PTTmultinomialY}
\end{align}
\label{theo:PTTmultinomial}
\end{theorem}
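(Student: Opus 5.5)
The plan is to reduce everything to what is already known: the multinomial congruence of Proposition~\ref{prop:kmod} for the prime direction, and the binomial counterexamples from the proof of Theorem~\ref{theo:PTTXY} for the composite direction. The reduction uses the fact that a multinomial coefficient with all but two lower entries equal to zero is an ordinary binomial coefficient.

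\emph{Prime direction.} Assume $N>2$ is prime. Proposition~\ref{prop:kmod} gives $\binom{K}{\vec{k}}\equiv(-1)^{K+k_r}T^{N,r}_{\vec{k}}\mymod{N}$ for every $\vec{k}\in\mathcal{D}_M$. By Definition~\ref{def:XYnr} this says exactly $X^{N,r}_{\vec{k}}\equiv 0\mymod{N}$, which is (\ref{eqn:PTTmultinomialX}). The same congruence gives $Y^{N,r}_{\vec{k}}\equiv 2\binom{K}{\vec{k}}\mymod{N}$.

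Since $K\le N-1<N$, the numerator $K!$ contains no factor $N$, so the multinomial analogue of (\ref{prop:C}) holds: $\binom{K}{\vec{k}}\not\equiv0\mymod{N}$. As $N>2$, the factor $2$ is invertible modulo $N$, so $Y^{N,r}_{\vec{k}}\not\equiv 0\mymod{N}$, which is (\ref{eqn:PTTmultinomialY}). Combining the two gives (\ref{eqn:PTTmultinomialXY}).

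\emph{Composite direction: the embedding.} For composite $N$ we must show that (ii) and (iii) each fail for some $\vec{k}$. Given $(i,j)\in\mathcal{D}_{j\le i}$, fix an index $s\ne r$, which exists because $M\ge2$. Define $\vec{k}$ by
\[
k_r=j,\qquad k_s=i-j,\qquad k_l=0 \text{ for all other } l .
\]
Then $K=i$ and $\binom{K}{\vec{k}}=\binom{i}{j}$, since the zero entries contribute $0!=1$. The coefficient $T^{N,r}_{\vec{k}}$ has upper index $N-1-j$ and nonzero lower entries $N-1-i$ (in position $r$) and $i-j$ (in position $s$). Their sum is $N-1-j$, so $T^{N,r}_{\vec{k}}=\binom{N-1-j}{N-1-i}$. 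The sign is $(-1)^{K+k_r}=(-1)^{i+j}$.

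Comparing with (\ref{XijBinome}) and (\ref{YijBinome}), this gives
\[
X^{N,r}_{\vec{k}}=X^N_{i,j},\qquad Y^{N,r}_{\vec{k}}=Y^N_{i,j}.
\]
So each pair $(i,j)$ has a multinomial counterpart with identical $X$ and $Y$ values.

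\emph{Composite direction: transferring the counterexamples.} The composite counterexamples from the proof of Theorem~\ref{theo:PTTXY} now carry over unchanged. For odd smallest prime factor $q$, the pair $(q,0)$ gives $X\equiv m$ and $Y\equiv 2-m$, both nonzero, so (ii) and (iii) fail. For $q=2$, the pair $(q,0)$ with $N\ne4$ gives $X\equiv -m$ and $Y\equiv 2+m$, both nonzero, so (ii) and (iii) fail. For $N=4$, the pair $(2,1)$ gives $X\equiv Y\equiv0$, so (iii) fails; (ii) already fails there via $(3,0)$, where $X=2$.

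The main point to verify carefully is this embedding identity, in particular that the position of $N-1-K$ inside $T^{N,r}$ and the sign convention match (\ref{XijBinome}) for an arbitrary $r$ and $s$. Once that is checked, the rest is immediate from the earlier results.
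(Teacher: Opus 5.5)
Your proposal is correct and follows essentially the paper's proof: Proposition~\ref{prop:kmod} gives the prime direction, and an embedding of binomial indices into $\mathcal{D}_M$ transfers the composite counterexamples of Theorem~\ref{theo:PTTXY}. The only difference is that the paper embeds just the column $j=0$ (via $k_r=0$, $k_s=i$) and treats $N=4$ with an ad hoc vector $\vec{k}_4$ ($K=2$, $k_r=1$, one other entry equal to $1$); that vector is exactly your embedding of $(2,1)$, so your general embedding $k_r=j$, $k_s=i-j$ handles all cases uniformly, and your use of $(3,0)$ to make (ii) fail for $N=4$ is slightly more careful than the paper, whose $\vec{k}_4$ only breaks (iii).
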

\begin{proof}
The implications of \eqref{eqn:PTTmultinomialX}, 
\eqref{eqn:PTTmultinomialXY} and \eqref{eqn:PTTmultinomialY}
for prime $N$ are consequences of 
Proposition~\ref{prop:kmod} together with the extension of
(\ref{prop:N1j}) to multinomial coefficients.


To prove the converses of \eqref{eqn:PTTmultinomialX} and \eqref{eqn:PTTmultinomialXY}, 
we show that these conditions fail whenever $N$ is composite.

Let $(k_1,\dots,k_l,\dots,k_M)$ be such that $k_1+\dots+k_M = K \leq N-1$, and let
$r \in \{1,\dots,M\}$. Fix $s \neq r$, and define $\vec{i}_s$ by
$k_s = i$ and $k_l = 0$ for all $l \neq s$. Then $k_r = 0$ and
$K = i$.

In this case, the expressions reduce to the binomial setting:
\begin{align*}
\binom{i}{\vec{i}_s}&=\frac{i!}{0!\dots i!\dots 0!}
=\binom{i}{0}=A^N_{i,0},\\
T^{N,r}_{\vec{i}_s}&=\binom{N-1}{0,\dots,i,\dots,N-1-i,\dots,0}
=\binom{N-1}{i}=B^N_{i,0},\\
X^{N,r}_{\vec{i}_s}&=A^N_{i,0}-(-1)^{i+0}B^N_{i,0}=X^N_{i,0},\\
Y^{N,r}_{\vec{i}_s}&=Y^N_{i,0}.
\end{align*}

Therefore, any counterexample with $j=0$ in the binomial case
(Theorem~\ref{theo:PTTXY}) yields a counterexample in the multinomial case.

This covers all composite $N \neq 4$, since there exists a counterexample 
with $j=0$ in the binomial case for all such $N$.
In the remaining case $N=4$,
we use instead a vector $\vec{k}_4$ such that $K=2$, $k_r=1$ and exactly 
one other component equals $1$, giving
\begin{align*}
X^{4,r}_{\vec{k}_4}&=\binom{2}{1,1}-(-1)^{2+1}\binom{4-1-1}{1,4-1-2}=4\equiv 0 \mymod{4},\\
Y^{4,r}_{\vec{k}_4}&=\binom{2}{1,1}+(-1)^{2+1}\binom{4-1-1}{1,4-1-2}=0\equiv 0 \mymod{4},
\end{align*}
which shows that the required property fails in this case as well.

Hence, if $N$ is composite, the stated conditions do not hold, which
completes the proof of \eqref{eqn:PTTmultinomialX} and \eqref{eqn:PTTmultinomialXY}.
\end{proof}

\section{Discussion}
\label{sec:symmetry}

We now investigate how these multinomial decompositions translate back
to the binomial setting by considering the case $M=2$ (i.e., binomial coefficients),
where additional symmetries arise.

\begin{proposition} 
For $M=2$, the generalized Pascal tiling theorem
(Theorem~\ref{theo:PTTmultinomial}) yields two decompositions,
corresponding respectively to the symmetries $(i,j)\mapsto(N-1-j,N-1-i)$
and $(i,j)\mapsto(N-1-i+j,j)$.
\end{proposition}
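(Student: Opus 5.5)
The plan is to specialize Definition~\ref{def:Tnr} and Definition~\ref{def:XYnr} to $M=2$ and compute $T^{N,r}_{\vec{k}}$ for the two possible choices $r=1$ and $r=2$. We use the identification of Definition~\ref{def:multinome}: $k_1=j$, $k_2=i-j$, $K=i$, so that $\binom{K}{\vec{k}}=\binom{i}{j}=A^N_{i,j}$. In each case we rewrite $T^{N,r}_{\vec{k}}$ as a single binomial coefficient $\binom{i'}{j'}$ and read off the index map $(i,j)\mapsto(i',j')$. Theorem~\ref{theo:PTTmultinomial} then applies verbatim and gives the corresponding decomposition $(X^{N,r},Y^{N,r})$.

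\textbf{Case $r=1$.} Here $k_r=j$, so
\[
T^{N,1}_{\vec{k}}=\binom{N-1-j}{N-1-i,\;i-j}=\binom{N-1-j}{N-1-i}=B^N_{i,j},
\]
and the sign is $(-1)^{K+k_r}=(-1)^{i+j}$. Hence $X^{N,1}_{\vec{k}}=X^N_{i,j}$ and $Y^{N,1}_{\vec{k}}=Y^N_{i,j}$, by (\ref{XijBinome}) and (\ref{YijBinome}). The second term is the entry of Pascal's triangle at position $(i',j')=(N-1-j,N-1-i)$. So this case recovers exactly the decomposition $(X^N,Y^N)$ of Theorem~\ref{theo:PTTXY}, with the symmetry $(i,j)\mapsto(N-1-j,N-1-i)$. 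We also check that this map is an involution of $\mathcal{D}_{j\le i}$: the inequality $j\le i$ is preserved, and $i'\le N-1$.

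\textbf{Case $r=2$.} Here $k_r=i-j$, so
\[
T^{N,2}_{\vec{k}}=\binom{N-1-i+j}{j,\;N-1-i}=\binom{N-1-i+j}{j},
\]
with sign $(-1)^{i+(i-j)}=(-1)^{j}$. This is the entry at position $(i',j')=(N-1-i+j,j)$, so we obtain the symmetry $(i,j)\mapsto(N-1-i+j,j)$. Again we verify that it is an involution of $\mathcal{D}_{j\le i}$: applying it twice gives $N-1-(N-1-i+j)+j=i$, and $j\le i'\le N-1$ holds whenever $j\le i$. The decomposition is $X^{N,2}_{i,j}=\binom{i}{j}-(-1)^j\binom{N-1-i+j}{j}$, together with the analogous $Y^{N,2}$. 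Proposition~\ref{prop:kmod} gives the congruence $\binom{i}{j}\equiv(-1)^j\binom{N-1-i+j}{j}\pmod N$ for prime $N$. This matches the symmetry of~\cite{rowland_lucas_2022,cinkir_extension_2023}.

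Since $M=2$ admits only $r\in\{1,2\}$, these are the only two decompositions, and the proposition follows. We expect no real obstacle. The only point needing care is the bookkeeping of which multinomial slot is replaced by $N-1-K$, together with the sign $(-1)^{K+k_r}$. Checking that both index maps send $\mathcal{D}_{j\le i}$ into itself is routine.
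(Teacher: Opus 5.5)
Your proposal is correct and follows the paper's proof essentially line by line. Like the paper, you specialize to $M=2$ with $K=i$, $k_1=j$, $k_2=i-j$, compute $T^{N,r}_{\vec{k}}$ and the sign $(-1)^{K+k_r}$ for $r=1$ and $r=2$, and read off the index maps $(i,j)\mapsto(N-1-j,N-1-i)$ and $(i,j)\mapsto(N-1-i+j,j)$. Your extra check that both maps are involutions of $\mathcal{D}_{j\le i}$ is a small detail the paper leaves implicit.
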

\begin{proof}
Set $M=2$, $K=i$ and $k_1=j$ (i.e., $k_2=K-k_1$).
Recall that the notation $X^N_{i,j}$ differs from $X^{N,r}_{\vec{k}}=X^{N,r}_{k_1,k_2}$ defined for $M=2$.

We treat separately the cases $r=1$ and $r=2$.

\medskip

\noindent\textbf{(i) Case $\mathbf{r=1}$.}
\begin{align*}
X^{N,1}_{k_1,K-k_1}&=\binom{K}{k_1,K-k_1}-(-1)^{K+k_1}\binom{N-1-k_1}{N-1-K,K-k_1}\\
&=\binom{i}{j,i-j}-(-1)^{i+j}\binom{N-1-j}{N-1-i,i-j}\\
&=\binom{i}{j}-(-1)^{i+j}\binom{N-1-j}{N-1-i}
=\binom{i}{i-j}-(-1)^{i+j}\binom{N-1-j}{i-j}\\
&=X^N_{i,j}.
\end{align*}
For $M=2$ (binomial coefficients), the case $r=1$ corresponds to the symmetry $(i,j)\mapsto(N-1-j,N-1-i)$.

\medskip

\noindent\textbf{(ii) Case $\mathbf{r=2}$.}
\begin{align*}
X^{N,2}_{k_1,K-k_1}&=\binom{K}{k_1,K-k_1}-(-1)^{K+K-k_1}\binom{N-1-K+k_1}{k_1,N-1-K}\\
&=\binom{i}{j,i-j}-(-1)^{2i-j}\binom{N-1-i+j}{j,N-1-i}\\
&=\binom{i}{j}-(-1)^{j}\binom{N-1-i+j}{j}
=\binom{i}{i-j}-(-1)^{j}\binom{N-1-i+j}{N-1-i}.
\end{align*}
For $M=2$ (binomial coefficients),  the case $r=2$ corresponds to the symmetry $(i,j)\mapsto(N-1-i+j,j)$.
\end{proof}

The pairs $\left(X^{N,r}_{\vec{k}}, Y^{N,r}_{\vec{k}}\right)$ define the $M$ canonical 
decompositions of the $M$-dimensional Pascal simplex. These decompositions arise 
in particular from the invariance of multinomial coefficients of order $M+1$ 
under permutations of the $k_l$ (Definition~\ref{def:multinome}).

The generalization to $\left(S^{N,r,z}_{\vec{k}}, W^{N,r,z}_{\vec{k}}\right)$
is given by the parametric linear map $\mathcal{F}_z$, which,
in the case $M = 2$, yields $\left(S^N_{i,j}, W^N_{i,j}\right)$ for $k_1 = j$,
$k_2 = i - j$, $r = 1$, and $z = i + j$.

\begin{definition}
For all $N \ge 2$, $M \geq 2$, $\vec{k} \in \mathcal{D}_M$, 
$r \in \{1,\dots,M\}$ and $z=\mathcal{Z}\left(r,\vec{k}\right)$, 
we define $S^{N,r,z}_{\vec{k}}$ and $W^{N,r,z}_{\vec{k}}$
(denoted $U_{\vec{k}}$ and $V_{\vec{k}}$ to simplify the notation)
such that
\begin{equation*}
\left(U_{\vec{k}},V_{\vec{k}}\right) =
\left(S^{N,r,z}_{\vec{k}},W^{N,r,z}_{\vec{k}}\right)=\mathcal{F}_{z}\left(X^{N,r}_{\vec{k}},Y^{N,r}_{\vec{k}}\right),
\end{equation*}
\label{def:UV}
\end{definition} 

Equivalently,
\begin{align*}
U_{\vec{k}}=S^{N,r,z}_{\vec{k}}&=\binom{K}{\vec{k}}+(-1)^{z+K+k_r}~T^{N,r}_{\vec{k}},\\
V_{\vec{k}}=W^{N,r,z}_{\vec{k}}&=\binom{K}{\vec{k}}-(-1)^{z+K+k_r}~T^{N,r}_{\vec{k}}.
\end{align*}

In the case $M=2$, $r=1$, $K=i$, $k_1=j$ and $z=K+k_r=i+j$, 
$U_{\vec{k}}=S^N_{i,j}$ and $V_{\vec{k}}=W^N_{i,j}$.
This yields a {\it Pascal tiling} for $U_{\vec{k}}$ and $V_{\vec{k}}$ 
if and only if $N$ is prime according to Theorem~\ref{theo:PTT}. 
The case $r = 2$ leads to distinct decomposition. We have $z=K+k_r=i+(i-j)=2i-j, \quad (-1)^z=(-1)^j$, 
and the congruence properties modulo a prime $N$ are then identical along each column $j$. Hence, 
this does not yield a {\it Pascal tiling} for these $U_{\vec{k}}$ and $V_{\vec{k}}$ .

In the general case of Pascal simplices, a perfect alternation of the congruence 
properties corresponds to varying a single $k_l$ while $K$ changes by $\pm 1$.
The corresponding parametric linear map is $\mathcal{F}_z$ with $z=K$.
With this combination, the binomial case yields a Pascal tiling for $\left(U_{\vec{k}},V_{\vec{k}}\right)$
if and only if $N$ is prime, for both $r=1$ and $r=2$, provided that the corresponding matrices are 
expressed in terms of $(k_1,k_2)=(j,i-j)$ rather than $(i,j)=(k_1+k_2,k_1)$.

\begin{theorem}
Let $M \ge 2$, $N > 2$, $r \in \{1,\dots,M\}$, and
let
$\left(U_{\vec{k}},V_{\vec{k}}\right)
=\mathcal{F}_K\left(X^{N,r}_{\vec{k}},Y^{N,r}_{\vec{k}}\right)$, where 
$z=\mathcal{Z}\left(r,\vec{k}\right)=\sum_{l=1}^M k_l=K$.
Then the following statements are equivalent:
\begin{align}
(i)\quad&N \text{ is prime};\nonumber\\
(ii)\quad& \forall \vec{k} \in \mathcal{D}_M:\; \1{U_{\vec{k}}}=\frac{1+(-1)^K}{2};\label{eqn:PTTUK}\\
(iii)\quad& \forall \vec{k} \in \mathcal{D}_M:\; \1{V_{\vec{k}}}=\frac{1-(-1)^K}{2};\label{eqn:PTTVK}\\
(iv)\quad& \forall \vec{k} \in \mathcal{D}_M:\; \left|\1{U_{\vec{k}}}-\1{V_{\vec{k}}}\right|=1.\label{eqn:PTTUVK}
\end{align}
\label{theo:PTTUVK}
\end{theorem}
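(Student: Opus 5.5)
The plan is to reduce everything to results already proved. First, simplify the sign. With $z=K$ the sign in Definition~\ref{def:UV} is $(-1)^{z+K+k_r}=(-1)^{k_r}$, so
\begin{equation*}
U_{\vec{k}}=\binom{K}{\vec{k}}+(-1)^{k_r}T^{N,r}_{\vec{k}},\qquad
V_{\vec{k}}=\binom{K}{\vec{k}}-(-1)^{k_r}T^{N,r}_{\vec{k}}.
\end{equation*}

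\textbf{Prime direction.} For $N$ prime, Proposition~\ref{prop:kmod} gives $T^{N,r}_{\vec{k}}\equiv(-1)^{K+k_r}\binom{K}{\vec{k}} \mymod{N}$. Hence
\begin{equation*}
U_{\vec{k}}\equiv\left(1+(-1)^K\right)\binom{K}{\vec{k}},\qquad
V_{\vec{k}}\equiv\left(1-(-1)^K\right)\binom{K}{\vec{k}} \mymod{N}.
\end{equation*}
Since $K\le N-1<N$, the quantity $\binom{K}{\vec{k}}=K!/\prod_l k_l!$ contains no factor $N$ in its numerator. It is therefore $\not\equiv 0 \mymod{N}$; this is the multinomial analogue of (\ref{prop:C}). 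Because $N>2$, $2\binom{K}{\vec{k}}\not\equiv 0$ as well. This yields (\ref{eqn:PTTUK}) and (\ref{eqn:PTTVK}), and (\ref{eqn:PTTUVK}) follows by subtracting them exactly as in Proposition~\ref{prop:PTTprime}.

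\textbf{Composite direction.} For composite $N$, I would reuse the embedding from the proof of Theorem~\ref{theo:PTTMultinomial}. Fix $s\neq r$ and take $\vec{i}_s$ with $k_s=i$ and all other components $0$, so that $k_r=0$ and $K=i$. Then $\binom{K}{\vec{k}}=A^N_{i,0}$ and $T^{N,r}_{\vec{i}_s}=B^N_{i,0}$. The sign $(-1)^{k_r}$ equals $1$, so
\begin{equation*}
U_{\vec{i}_s}=S^N_{i,0},\qquad V_{\vec{i}_s}=W^N_{i,0}.
\end{equation*}
Moreover, the predicted parities $(-1)^K=(-1)^{i}=(-1)^{i+0}$ agree with those of Theorem~\ref{theo:PTT}. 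Consequently every $j=0$ counterexample of Proposition~\ref{prop:PTTnonprime} transfers directly:
\begin{itemize}
\item for $q$ odd, (\ref{eq:PTTnonprime1}), (\ref{eq:PTTnonprime2}) and (\ref{eq:PTTnonprime3}) refute (ii), (iii) and (iv) respectively;
\item for $q=2$, (\ref{eq:PTTnonprime4}) and (\ref{eq:PTTnonprime5}) refute (ii) and (iii), including $N=4$;
\item for $q=2$ and $N\neq4$, (\ref{eq:PTTnonprime6}) refutes (iv).
\end{itemize}

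\textbf{Main obstacle: $N=4$ and statement (iv).} This is the only case where the binomial counterexample (\ref{eq:PTTnonprime7}) has $j\neq 0$. I would take, as in the proof of Theorem~\ref{theo:PTTmultinomial}, a vector with $K=2$, $k_r=1$ and exactly one other component equal to $1$. Then $\binom{2}{1,1}=2$ and $T^{4,r}=\binom{2}{1,1}=2$, so $U=2-2=0$ and $V=2+2=4$. Both vanish modulo $4$, which refutes (iv).

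The only other point needing care is checking that the transferred indices satisfy $K\le N-1$, that is, that they lie in $\mathcal{D}_M$. This holds because they come from $\mathcal{D}_{j\le i}$.
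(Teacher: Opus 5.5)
Your proposal is correct and follows the paper's proof essentially step by step. You reduce the sign to $(-1)^{k_r}$, use Proposition~\ref{prop:kmod} for prime $N$, embed via $\vec{i}_s$ (with $s\neq r$) to recover $S^N_{i,0}$ and $W^N_{i,0}$ and hence the $j=0$ counterexamples of Proposition~\ref{prop:PTTnonprime}, and handle $N=4$ with the vector having $K=2$, $k_r=1$; you are also slightly more explicit than the paper in two places: you justify $\binom{K}{\vec{k}}\not\equiv 0$ modulo a prime $N>K$, and you note that for $N=4$ statements (ii) and (iii) are already refuted by $j=0$ counterexamples.
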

\begin{proof}
We follow the same line of argument  as in 
Theorem~\ref{theo:PTT}, after expressing the decomposition
corresponding to $z=K$.

By Definition~\ref{def:UV}, setting $z=K$,
\begin{align*}
\left(U_{\vec{k}},V_{\vec{k}}\right)&=\mathcal{F}_{K}\left(X^{N,r}_{\vec{k}},Y^{N,r}_{\vec{k}}\right)\\
U_{\vec{k}}=S^{N,r,K}_{\vec{k}}&=\binom{K}{\vec{k}}+(-1)^{k_r}~T^{N,r}_{\vec{k}},\\
V_{\vec{k}}=W^{N,r,K}_{\vec{k}}&=\binom{K}{\vec{k}}-(-1)^{k_r}~T^{N,r}_{\vec{k}}.
\end{align*}

For prime $N$, 
(\ref{eqn:PTTUK}) and (\ref{eqn:PTTVK})
follow from Proposition~\ref{prop:kmod},
while (\ref{eqn:PTTUVK}) follows
from 
(\ref{eqn:PTTUK}) and (\ref{eqn:PTTVK}).

To prove the converse, we reduce to the binomial case.
Fix $\vec{i}_s$ with $s\ne r$, so that $k_r=0$, $K=i$,
$\binom{K}{\vec{i}_s}=A^N_{i,0}$,
$T^{N,r}_{\vec{i}_s}=B^N_{i,0}$, and hence
\begin{align*}
U_{\vec{i}_s}&=A^N_{i,0}+B^N_{i,0}=S^N_{i,0},\\
V_{\vec{i}_s}&=A^N_{i,0}-B^N_{i,0}=W^N_{i,0}.
\end{align*}
Thus, the counterexamples from the proof of Theorem~\ref{theo:PTT} 
with $j=0$ also apply in the present setting, for all composite $N \ne 4$.

Counterexamples for $N=4$ are obtained from $\vec{k}_4$ as for Theorem~\ref{theo:PTTmultinomial}.
Namely, a vector $\vec{k}_4$ such that $K=2$, $k_r=1$, and exactly
one other component equals $1$, giving
\begin{align*}
S^{4,r,2}_{\vec{k}_4}&=\binom{2}{1,1}+(-1)^{1}\binom{4-1-1}{1,4-1-2}=0\equiv 0 \mymod{4},\\
W^{4,r,2}_{\vec{k}_4}&=\binom{2}{1,1}-(-1)^{1}\binom{4-1-1}{1,4-1-2}=4\equiv 0 \mymod{4},
\end{align*}
which shows that (\ref{eqn:PTTUVK}) fails in this case as well.
\end{proof}

Counterexamples where $N$ is a prime power (Theorem~\ref{theo:PTTmultinomial}) no longer arise. 
Consequently, Pascal tiling holds if and only if $N$ is prime. 
For the linear map $\mathcal{F}_z$ with $z = K$, this extends 
to both terms of the decomposition and to all values of $r$ (i.e.\ to the $M$ corresponding symmetries).

In the general case, the existence of a Pascal tiling for one term of the decomposition, for both terms, 
or for neither of them depends on the mapping $\mathcal{Z}$ and
the chosen representation (for instance $(k_1,\dots,k_M)$ or $(K,k_2,\dots,k_M)$).

By construction, the property
\[
\text{prime~}N~\Llra\forall\vec{k} \in \mathcal{D}_M:\; \left|\1{U_{\vec{k}}}-\1{V_{\vec{k}}}\right|=1
\]
remains valid for every decomposition $(U_{\vec{k}},V_{\vec{k}})$ obtained as the image 
under the map $\mathcal{F}_z$ with $z=\mathcal{Z}\left(r,\vec{k}\right)$, provided that 
$z \in \Z$ for all $\vec{k} \in \mathcal{D}_M$.
This is not necessarily the case for other maps, and additional constraints are generally 
required, as illustrated in the following counterexample.

\begin{definition}
Let $\vec{k} \in \mathcal{D}_M$. We define the decomposition 
\begin{equation*}
\left(U'_{\vec{k}},V'_{\vec{k}}\right)
=\left(\sum_{r=1}^M X^{N,r}_{\vec{k}},\sum_{r=1}^M Y^{N,r}_{\vec{k}}\right)
\end{equation*}
\label{def:XNveck}
\end{definition}
\begin{proposition}
Let $M > 2$ and set $N=M$. If $N$ is prime, then 
\begin{equation*}
\forall \vec{k} \in \mathcal{D}_M,\;
\left|\1{U'_{\vec{k}}}-\1{V'_{\vec{k}}}\right|=0.
\end{equation*}
\label{prop:XM}
\end{proposition}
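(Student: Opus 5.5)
We show that when $N=M$ is prime, both $U'_{\vec{k}}$ and $V'_{\vec{k}}$ are congruent to $0 \mymod{N}$ for every $\vec{k}\in\mathcal{D}_M$. Then $\1{U'_{\vec{k}}}=\1{V'_{\vec{k}}}=0$ and their difference vanishes.

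For $U'_{\vec{k}}$, Theorem~\ref{theo:PTTmultinomial}, statement (\ref{eqn:PTTmultinomialX}), gives $X^{N,r}_{\vec{k}}\equiv 0 \mymod{N}$ for each $r\in\{1,\dots,M\}$ (equivalently, by Proposition~\ref{prop:kmod}). Summing over $r$ gives $U'_{\vec{k}}\equiv 0\mymod{N}$. This step does not use $N=M$.

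For $V'_{\vec{k}}$, write $Y^{N,r}_{\vec{k}}=X^{N,r}_{\vec{k}}+2(-1)^{K+k_r}T^{N,r}_{\vec{k}}$. Alternatively, use Proposition~\ref{prop:kmod} directly: $(-1)^{K+k_r}T^{N,r}_{\vec{k}}\equiv\binom{K}{\vec{k}}\mymod{N}$, so $Y^{N,r}_{\vec{k}}\equiv 2\binom{K}{\vec{k}}\mymod{N}$ for every $r$. The congruent value does not depend on $r$, hence
\begin{equation*}
V'_{\vec{k}}=\sum_{r=1}^M Y^{N,r}_{\vec{k}}\equiv 2M\binom{K}{\vec{k}} \equiv 0 \mymod{N},
\end{equation*}
because $M=N$. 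This is where the hypothesis $N=M$ enters: the $M$ identical residues add up to a multiple of $N$. The same sum also shows why the construction fails in general: for $M\not\equiv 0\mymod{N}$, the generalization of (\ref{prop:C}) to multinomial coefficients (with $K<N$ prime, $\binom{K}{\vec{k}}\not\equiv0$) would instead give $V'_{\vec{k}}\not\equiv 0$.

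The only point requiring care is the condition on $r$ in Proposition~\ref{prop:kmod}. That proposition holds for all $r\in\{1,\dots,M\}$, including the cases where $k_r=0$, and for $\vec{k}$ with $K=0$. So the $r$-independence of $Y^{N,r}_{\vec{k}}\bmod N$ is uniform over $\mathcal{D}_M$, and no case split is needed. The hypothesis $M>2$ guarantees $N=M\geq 3$, so $N$ is an odd prime and we may invoke (\ref{prop:N1j}) without the separate $N=2$ discussion. This concludes the argument: $\left|\1{U'_{\vec{k}}}-\1{V'_{\vec{k}}}\right|=|0-0|=0$ for all $\vec{k}\in\mathcal{D}_M$.
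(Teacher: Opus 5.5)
Your proposal is correct and follows essentially the same argument as the paper. Proposition~\ref{prop:kmod} gives $X^{N,r}_{\vec{k}}\equiv 0$ and $Y^{N,r}_{\vec{k}}\equiv 2\binom{K}{\vec{k}} \mymod{N}$ for every $r$, so summing over the $M=N$ values of $r$ makes both $U'_{\vec{k}}$ and $V'_{\vec{k}}$ congruent to $0 \mymod{N}$.
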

\begin{proof}
According to Definitions~\ref{def:XNveck} and~\ref{def:XYnr}, and
Proposition~\ref{prop:kmod}, for odd prime $N=M$ and all $\vec{k} \in \mathcal{D}_M$, with $K=\sum_l k_l$,
\begin{align*}
U'_{\vec{k}}&\equiv \sum_{r=1}^M \binom{K}{\vec{k}}\left(1 - (-1)^{2K+2k_r}\right) \equiv 0 \mymod{N},\\[4pt]
V'_{\vec{k}}&\equiv \sum_{r=1}^M \binom{K}{\vec{k}}\left(1 + (-1)^{2K+2k_r}\right)  \equiv 2 M \binom{K}{\vec{k}} \equiv 0 \mymod{N},
\end{align*}
which proves the proposition.
\end{proof}

A more detailed analysis of linear combinations of the canonical
decompositions, as well as a generalization to negative multinomial
coefficients, which arises naturally from the expression of the
canonical decompositions, lies beyond the scope of this work and is
left for  future study.

\section{Conclusion}
\label{sec:conclusion}

We show that the symmetries underlying the congruence properties of
$M$-dimensional Pascal simplices arise from invariance properties of
multinomial coefficients of order $M+1$. This leads to several canonical
two-term decompositions of binomial and multinomial coefficients
(Theorem~\ref{theo:PTTmultinomial}).

The study of linear combinations of the terms in these canonical 
decompositions
shows that one of them, namely $z=\mathcal{Z}(r,\vec{k})=K$, makes it
possible to observe Pascal tilings for both terms 
$\left(U_{\vec{k}},V_{\vec{k}}\right)$ of the resulting decomposition,
for all $M\ge2$ and $r \in \{1,\dots,M\}$, leading to 
Theorem~\ref{theo:PTTUVK}.

Another decomposition, arising from  a simple geometrical construction
based on the first $N$ rows  of Pascal's triangle,
explains the failure of the converse of the property
$N \text{ prime}\Rightarrow L_N-1 \equiv0 \mymod{N}$.
For prime numbers, this equality can be expressed as
a sum of terms all congruent to $0 \mymod{N}$, whereas for Fibonacci
pseudoprimes, the sum remains congruent to $0 \mymod{N}$ but contains
terms that are not congruent to $0 \mymod{N}$.

It is possible to further analyze the
properties of $L_N-1$ modulo each prime factor and 
bound both the number of nonzero terms and the number
of distinct values of $S^N_{i,j} \mymod{N}$ using the prime factorization
of $N$ and Proposition~\ref{prop:fibozero}.
A detailed study of these aspects is left for future work.

Other applications are possible. For example, one may formulate
primality tests distinct from those directly derived from
(\ref{prop:N1j}) or its corollaries. 
Concretely, the matrices $X^N$ and $Y^N$ can be
partially constructed as soon as the first half of the rows of
Pascal's triangle is known, that is, from one quarter of the binomial
coefficients of the first $N$ rows. A detailed analysis of the
efficiency of such tests is beyond the scope of this work.

\bibliographystyle{plainurl}
\bibliography{DamierPascal}
\medskip
\noindent MSC2020: 11B65, 11A07, 11A41, 11B39 
\section{Appendix: Auxiliary Identities}
\label{sec:appendix}

This appendix contains supplementary results on the role of prime factors
of composite $N$ in the congruence properties of the matrices $S^N$ and $W^N$,
complementing the analysis developed in the main text, in particular Theorem~\ref{theo:pseudoprime}.

\begin{proposition}
Let $N > 2$ and let $(i,j) \in \mathcal{D}_{j \leq i}$ with $i-j \ge 2$ and $j \ge 1$. Then
\begin{equation*}
S^N_{i,j}=S^N_{i-1,j-1}+\sum_{n=0}^{j-1}S^N_{i-2-n,j-n}+W^N_{i-j-1,0}
\end{equation*}
\label{prop:Srecurrence}
\end{proposition}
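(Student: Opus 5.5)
The plan is to derive two first-order recurrences for $A^N$ and $B^N$ and then telescope. The starting point is Pascal's rule applied to each of the two binomial families in Definition~\ref{def:SijWij}.

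For $A^N_{i,j}=\binom{i}{j}$ we have the usual relation $A^N_{i,j}=A^N_{i-1,j-1}+A^N_{i-1,j}$ whenever $i,j\ge1$. For $B^N_{i,j}=\binom{N-1-j}{N-1-i}$, apply Pascal's rule to $B^N_{i-1,j-1}=\binom{N-j}{N-i}$:
\[
\binom{N-j}{N-i}=\binom{N-1-j}{N-i}+\binom{N-1-j}{N-1-i},
\]
that is, $B^N_{i-1,j-1}=B^N_{i-1,j}+B^N_{i,j}$. This gives the signed recurrence $B^N_{i,j}=B^N_{i-1,j-1}-B^N_{i-1,j}$, which is the same one used implicitly in the proof of Proposition~\ref{prop:decompXYrec}.

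Adding and subtracting the two recurrences yields two mixed identities. The first is
\[
S^N_{i,j}=S^N_{i-1,j-1}+W^N_{i-1,j},
\]
obtained by grouping $A^N_{i-1,j-1}+B^N_{i-1,j-1}$ and $A^N_{i-1,j}-B^N_{i-1,j}$. The second, after pairing differently, is
\[
W^N_{a,b}=S^N_{a-1,b}+W^N_{a-1,b-1}\qquad(a,b\ge1),
\]
obtained by grouping $A^N_{a-1,b}+B^N_{a-1,b}$ and $A^N_{a-1,b-1}-B^N_{a-1,b-1}$. The second identity is the key step, because it lets the $W$ term descend one diagonal at a time while shedding one $S$ term at each step.

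Next, apply the second identity iteratively to $W^N_{i-1,j}$ along the diagonal $(a,b)=(i-1-n,\,j-n)$ for $n=0,\dots,j-1$. Each step produces $S^N_{i-2-n,j-n}$, and the process terminates at $W^N_{i-1-j,0}$. Substituting the result into the first identity gives exactly the stated formula.

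The only genuine obstacle is index bookkeeping. I need to ensure that every index pair stays in $\mathcal{D}_{j\le i}$, so that the lower-triangle definitions apply without invoking the symmetric extension, and that every binomial used has nonnegative arguments. The hypothesis $j\ge1$ guarantees $b=j-n\ge1$ throughout the iteration, which is what Pascal's rule requires. The hypothesis $i-j\ge2$ guarantees $j-n\le i-2-n$, so each $S^N_{i-2-n,j-n}$ lies in $\mathcal{D}_{j\le i}$, and also $W^N_{i-1-j,0}$ has first index at least $1$. Finally, all row indices are at most $i\le N-1$, so each $B^N$ entry is an honest binomial coefficient with nonnegative top and bottom entries. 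These are routine checks, and I would verify them explicitly at the first and last steps of the telescoping.
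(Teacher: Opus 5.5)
Your proposal is correct and follows the paper's proof essentially step for step. The paper also derives $S^N_{i,j}=S^N_{i-1,j-1}+W^N_{i-1,j}$ and $W^N_{i-1,j}=S^N_{i-2,j}+W^N_{i-2,j-1}$ from Pascal's rule, then iterates down the diagonal to $W^N_{i-j-1,0}$; your explicit check that $i-j\ge 2$ keeps every $S^N_{i-2-n,j-n}$ in $\mathcal{D}_{j\le i}$, so the symmetric extension is never used, is a detail the paper leaves implicit.
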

\begin{proof}
\begin{align*}
S^N_{i,j}&=\binom{i}{j}+\binom{N-1-j}{N-1-i}\\
&=\binom{i-1}{j-1}+\binom{i-1}{j}+\binom{N-1-(j-1)}{N-1-(i-1)}-\binom{N-1-j}{N-1-(i-1)}\\
&=S^N_{i{-}1,j{-}1}+W^N_{i{-}1,j}\\[4pt]
W^N_{i{-}1,j}&=S^N_{i{-}2,j}+W^N_{i{-}2,j{-}1}\\[4pt]
S^N_{i,j}&=S^N_{i{-}1,j{-}1}+S^N_{i{-}2,j}+W^N_{i{-}2,j{-}1}
\end{align*}
Proposition~\ref{prop:Srecurrence} follows by induction, expanding $W^N_{i-2,j-1}$, $W^N_{i-3,j-2}$, 
$\dots$ until $W^N_{i-j-1,0}$.
\end{proof}

\begin{proposition}
Let $N = mq$ be an odd composite, where $m > 1$ and
$q \ge 3$ is the smallest prime factor of $N$. 
Let $(i,j) \in \mathcal{D}_{j \leq i}$ and let $0 \leq k \leq (q-3)/2$. Then
\begin{align}
i-j=2k+1 &\Longrightarrow S^N_{i,j} \equiv 0 \mymod{N},\label{eqn:fibomod1}\\
i-j=q&\Longrightarrow S^N_{i,j}\equiv m \mymod{N}.\label{eqn:fibomod2}
\end{align}
\label{prop:fibomod}
\end{proposition}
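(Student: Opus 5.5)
The plan is to rewrite $S^N_{i,j}$ so that both summands are binomial coefficients with the same lower index $d=i-j$, and then compare them modulo $N$ using a Vandermonde expansion together with Proposition~\ref{prop:congruence}. Put $d=i-j$. By symmetry of Pascal's triangle,
\begin{equation*}
S^N_{i,j}=\binom{i}{d}+\binom{N-1-j}{d}=\binom{i}{d}+\binom{N+y}{d},\qquad y=d-1-i .
\end{equation*}
Here $y<0$, and $\binom{y}{d}$ is understood as the polynomial $y(y-1)\cdots(y-d+1)/d!$, which agrees with the convention for negative upper indices recalled in Proposition~\ref{prop:decompXYrec}. Upper negation then gives
\begin{equation*}
\binom{y}{d}=\binom{-(i+1-d)}{d}=(-1)^d\binom{i}{d}.
\end{equation*}

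Next we expand the second term with the Chu--Vandermonde identity. It is a polynomial identity in $y$, so it remains valid for negative integer $y$:
\begin{equation*}
\binom{N+y}{d}=\sum_{t=0}^{d}\binom{N}{t}\binom{y}{d-t}.
\end{equation*}
Every term in this sum is an integer, so we may reduce it modulo $N$ term by term. The $t=0$ term is $\binom{y}{d}=(-1)^d\binom{i}{d}$. For $1\le t\le q-1$, (\ref{S3}) gives $\binom{N}{t}\equiv 0 \mymod{N}$.

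\textbf{Case (\ref{eqn:fibomod1}).} Here $d=2k+1\le q-2<q$, so every term with $t\ge1$ vanishes modulo $N$. Hence $\binom{N+y}{d}\equiv(-1)^d\binom{i}{d}=-\binom{i}{d}\mymod{N}$, since $d$ is odd. Therefore $S^N_{i,j}\equiv 0\mymod{N}$.

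\textbf{Case (\ref{eqn:fibomod2}).} Here $d=q$. The terms with $1\le t\le q-1$ still vanish by (\ref{S3}). The remaining term $t=q$ is $\binom{N}{q}\binom{y}{0}=\binom{N}{q}\equiv m\mymod{N}$ by (\ref{S5}). Hence $\binom{N+y}{q}\equiv(-1)^q\binom{i}{q}+m\mymod{N}$. Since $N$ is odd, $q$ is odd, and the binomial terms cancel, leaving $S^N_{i,j}\equiv m\mymod{N}$.

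The main point needing care is justifying Vandermonde and upper negation when the upper index is negative. Both are polynomial identities in $y$, and all quantities involved are integers, so reduction modulo $N$ is legitimate. I also need $d=q\le N-1$, which always holds. If $(i,j)\in\mathcal{D}_{j\le i}$ forces $i<d$, then $\binom{i}{d}=0$ and the argument still goes through unchanged.
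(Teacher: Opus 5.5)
Your proof is correct, and it takes a genuinely different route from the paper's.

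\textbf{The paper's route.} The paper argues recursively. It applies the recurrence of Proposition~\ref{prop:Srecurrence} and inducts on $k$, so that the intermediate sum, which lies on the diagonal $i-j=2k-1$, vanishes modulo $N$. It disposes of the boundary term $W^N_{2k,0}$ with (\ref{S4}). It then walks down the diagonal $i-j=2k+1$ to the boundary entry $S^N_{2k+1,0}\equiv 0$, or, for the second claim, to $S^N_{q,0}\equiv m$ via (\ref{S6}).

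\textbf{Your route.} You instead evaluate each entry directly. You write $\binom{N-1-j}{N-1-i}=\binom{N-(j+1)}{d}$ and expand by Chu--Vandermonde. The $t=0$ term is exactly $(-1)^d\binom{i}{d}$ by upper negation. The terms with $1\le t<q$ vanish by (\ref{S3}). For $d=q$ the only surviving term is $\binom{N}{q}\equiv m$ by (\ref{S5}).

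\textbf{What each buys.} Your argument removes the nested inductions and needs only (\ref{S3}) and (\ref{S5}). It also proves more: for every $0\le d<q$, odd or even, it gives $\binom{N-1-j}{N-1-i}\equiv(-1)^{i-j}\binom{i}{j}\mymod{N}$. So the conclusion of Proposition~\ref{prop:compomod} survives for composite $N$ throughout the band $i-j<q$, and $W^N_{i,j}\equiv 0\mymod{N}$ for even $i-j<q$ comes for free. For prime $N$, your computation is essentially the ``third proof'' via the expansion of $\binom{p-a-1}{b}$ mentioned after Proposition~\ref{prop:compomod}. The paper's route, in exchange, stays inside its recursive framework and shows how the congruences propagate from the column $j=0$.

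A minor point: your closing remark about the case $i<d$ is vacuous, since $i=j+d\ge d$.
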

\begin{proof}
We first prove 
(\ref{eqn:fibomod1}), by induction.

\medskip

\noindent
\textbf{(i) Case $\mathbf{k=0}$} $\quad(i-j = 1,\quad 0 \leq j \leq N-2,\quad q \ge 3)$:
\begin{align*}
S^N_{1+j,j}&=\binom{j+1}{j}+\binom{N-1-j}{N-2-j}=j+1+N-1-j=N\\
&\equiv0 \mymod{N}
\end{align*}

\medskip

\noindent\textbf{(ii) Case $\mathbf{k=1}$} $\quad(i-j = 3,\quad 0 \leq j \leq N-4,\quad q \ge 5)$: 

By Proposition~\ref{prop:Srecurrence},
\begin{align*}
S^N_{3+j,j}&=S^N_{2+j,j-1}+\sum_{n=0}^{j-1}S^N_{3+j-2-n,j-n}+W^N_{3-1,0}\\
&=S^N_{2+j,j-1}+\sum_{n=0}^{j-1}S^N_{1+j-n,j-n}+W^N_{2,0}\\
\end{align*}
All terms $S^N_{1+j-n,j-n}$ satisfy $i'-j'=1$ and $k'=0$, hence are congruent to $0$ modulo $N$.
By (\ref{S4}), $W^N_{2,0}=1-\binom{mq-1}{2}\equiv1-(-1)^2\equiv 0 \mymod{N}$, and 
$S^N_{3,0}\equiv 1+\binom{mq-1}{3}\equiv 0 \mymod{N}$.

Thus, $S^N_{3+j,j}\equiv 0 \mymod{N}$ for $q \ge 5$ and all $0 \leq j \leq N-4$.

\medskip

\noindent\textbf{(iii) Case $\mathbf{k \ge 2}$} $\quad(i-j = 2k+1,\quad 0 \leq j \leq N-2-2k,\quad q \ge 7)$:

Assume that 
(\ref{eqn:fibomod1}) holds for $k'=k-1$. 
By Proposition~\ref{prop:Srecurrence},
\begin{equation*}
S^N_{2k+1+j,j}\equiv S^N_{2k+j,j-1}+W^N_{2k,0} \mymod{N}.
\end{equation*}

Arguing as for $k=1$, 
\begin{align*}
S^N_{2k+1,0}&\equiv0 \mymod{N},\\
W^N_{2k,0}&\equiv0 \mymod{N}.
\end{align*}
This proves (\ref{eqn:fibomod1}) by induction for all 
$2 \leq k \leq (q-3)/2$ and $0 \leq j \leq N-2k-2$.

(\ref{eqn:fibomod2}) corresponds to the case $i-j=q$.
From Proposition~\ref{prop:Srecurrence},
\begin{equation*}
S^N_{q+j,j}=S^N_{q+j-1,j-1}+\sum_{n=0}^{j-1}S^N_{q+j-2-n,j-n}+W^N_{q-1,0}.
\end{equation*}

By (\ref{eqn:fibomod1}), each term $S^N_{q+j-2-n,j-n}$ is congruent to $0$ modulo $N$.
Moreover, $W^N_{q-1,0}\equiv 0 \mymod N$, and by (\ref{S6}),
\[
S^N_{q,0}\equiv 1+m+(-1)^q\equiv m \mymod{N}.
\]

This proves (\ref{eqn:fibomod2}).
\end{proof}

\begin{proposition}
Let $N = mq > 3$ be a composite (odd or even), where $q$ is a prime factor of $N$.
If $m$ is odd, then
\[
S^{mq}_{i'_q,j'_q}\equiv m \mymod{mq},
\]
where
\[
i'_q = \frac{N+q}{2}-1, \qquad j'_q = \frac{N-q}{2}-1.
\]
\label{prop:composite}
\end{proposition}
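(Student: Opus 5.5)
The plan is to reduce everything to the closed form (\ref{eqn:fibozero0}) of Proposition~\ref{prop:fibozero}, exactly as in the first proof of Theorem~\ref{theo:pseudoprime}, and then finish with Lucas's theorem modulo $q$.

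\textbf{Setup.} Since $m=N/q$ is odd and $m>1$ (because $N$ is composite), we have $m\ge 3$. Set $a=(m-1)/2\ge 1$, which is an integer. Then
\begin{equation*}
j'_q+1=\frac{q(m-1)}{2}=qa, \qquad i'_q=\frac{q(m+1)}{2}-1=q(a+1)-1 .
\end{equation*}
Hence $0\le j'_q\le i'_q\le N-1$, so $(i'_q,j'_q)\in\mathcal{D}_{j\le i}$, and $i'_q+j'_q=N-2$. Proposition~\ref{prop:fibozero} therefore applies and gives
\begin{equation*}
S^{mq}_{i'_q,j'_q}=\frac{mq}{qa}\binom{q(a+1)-1}{qa-1}.
\end{equation*}

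\textbf{Step 1: simplify the binomial coefficient.} I will apply the absorption identity $\binom{n-1}{k-1}=\frac{k}{n}\binom{n}{k}$ twice.
\begin{itemize}
\item First, with $n=q(a+1)$ and $k=qa$:
\begin{equation*}
\binom{q(a+1)-1}{qa-1}=\frac{a}{a+1}\binom{q(a+1)}{q}.
\end{equation*}
\item Second, with $n=q(a+1)$ and $k=q$:
\begin{equation*}
\binom{q(a+1)}{q}=(a+1)\binom{q(a+1)-1}{q-1}.
\end{equation*}
\end{itemize}
Substituting both, the factors $a$ and $a+1$ cancel, and we obtain the integer identity
\begin{equation*}
S^{mq}_{i'_q,j'_q}=m\binom{q(a+1)-1}{q-1}.
\end{equation*}
This has the same shape as the identity $S^{mq}_{i_q,j_q}=m\binom{(m-1)q-1}{q-1}$ from the proof of Theorem~\ref{theo:pseudoprime}.

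\textbf{Step 2: conclude with Lucas's theorem.} In base $q$, we have $q(a+1)-1=qa+(q-1)$. Its last digit is $q-1$, and its higher digits are those of $a$. The number $q-1$ has last digit $q-1$ and all higher digits $0$. Lucas's theorem then gives
\begin{equation*}
\binom{q(a+1)-1}{q-1}\equiv\binom{a}{0}\binom{q-1}{q-1}\equiv 1 \mymod{q}.
\end{equation*}
Writing this binomial coefficient as $1+Hq$ with $H\in\Z$, we get $S^{mq}_{i'_q,j'_q}=m+Hmq\equiv m \mymod{mq}$.

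\textbf{Main obstacle.} The only delicate point is bookkeeping in Step 1. One must check that the rational prefactor $mq/(qa)$ combines with the two absorption identities into the integer $m$ times a binomial coefficient. The route above does this with exact equalities, which avoids any division modulo $N$.

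Nothing in the argument depends on the parity of $q$; it uses only that $m$ is odd and $m\ge 3$. The case $q=2$ is therefore covered as well, as the statement asserts.
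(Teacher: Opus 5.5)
Your proof is correct. It begins and ends like the paper's: the closed form \eqref{eqn:fibozero0} evaluated at $j+1=qa$, then Lucas's theorem. The middle step, however, is genuinely different.

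The paper keeps $S^{mq}_{i'_q,j'_q}=\frac{m}{H}\binom{Hq+(q-1)}{(H-1)q+(q-1)}$ (with $H=a$). It uses Lucas to get that this binomial is $\equiv H \mymod{q}$ and then cancels $H$ modulo $q$. Since this breaks down when $q\mid H$, the paper needs a second identity, $S^{mq}_{i'_q,j'_q}=\frac{m}{H+1}\binom{(H+1)q}{Hq}$, and a case split according to which of $H$, $H+1$ is prime to $q$.

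You instead cancel the factor $a$ exactly with two absorption identities. This gives the integer identity $S^{mq}_{i'_q,j'_q}=m\binom{(a+1)q-1}{q-1}$, whose Lucas residue is $1$ unconditionally, so no case split is needed.

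Your route also makes explicit a point the paper's first case uses tacitly. To lift a cancellation modulo $q$ to a congruence modulo $mq$, one needs either that $\frac{1}{H}\binom{Hq+(q-1)}{(H-1)q+(q-1)}$ is an integer, or that $\gcd(H,mq)=1$. The latter does hold there, since $2H=m-1$ forces $\gcd(H,m)=1$, but the paper never says so; your identity gives the integrality outright.

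The paper's version gains nothing essential in exchange. Yours is shorter, and it exhibits $S^{mq}_{i'_q,j'_q}$ in the same form $m\binom{\,\cdot\,q-1}{q-1}$ as $S^{mq}_{i_q,j_q}$ in the proof of Theorem~\ref{theo:pseudoprime}.
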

\begin{proof}
Let $N=mq$ be composite, with $q\geq 2$. Since $m$ is odd, set
$H=(m-1)/2$.

Then
\[
(i'_q,j'_q)=\left((H+1)q-1,\,Hq-1\right),\qquad
i'_q+j'_q=mq-2.
\]

By (\ref{eqn:fibozero0}),
\begin{align*}
S^{mq}_{i'_q,j'_q}&=
\frac{mq}{Hq-1+1}\binom{mq-2-Hq+1}{Hq-1}\\
&=\frac{m}{H}\binom{Hq+(q-1)}{(H-1)q+(q-1)}.\\
\end{align*}

By Lucas's theorem, 
\[\binom{Hq+(q-1)}{(H-1)q+(q-1)}\equiv H \mymod{q}.\]

If $H\not\equiv 0 \mymod q$, then $H$ is invertible modulo $q$, hence
\[
S^{mq}_{i'_q,j'_q}\equiv m \mymod mq.
\]

If $H\equiv 0 \mymod{q}$, we use instead the identity
\begin{align*}
S^{mq}_{mq-2-j,j}&=\left(\frac{j+1}{mq-1-j}+1\right)\binom{mq-1-j}{j+1}\\
&=\frac{mq}{mq-1-j}\binom{mq-1-j}{j+1},\\
\end{align*}
which yields
\begin{align*}
S^{mq}_{i'_q,j'_q}&=
\frac{mq}{(2H+1)q-1-Hq+1}\binom{(2H+1)q-1-Hq+1}{Hq-1+1}\\
&=\frac{m}{(H+1)}\binom{(H+1)q}{Hq}.
\end{align*}

Again by Lucas's theorem,
we conclude that 
\[H+1\not\equiv 0 \mymod{q}\Rightarrow S^{mq}_{i'_q,j'_q}\equiv m \mymod{mq}.\]

Since $H$ and $H+1$ cannot be both congruent to $0$ modulo $q$, we get the expected 
property for all prime factors $q$ of a composite $N=mq$, if $m=N/q$ is odd.
\end{proof}

This property holds for all odd composite $N = mq$, since both $m$ and $q$ 
are odd in this case. In particular, it applies to Fibonacci pseudoprimes, 
which are necessarily odd composite numbers. Table~\ref{tab:fibo705} 
displays the corresponding $(i'_q, j'_q)$ for $N = 705$, the smallest 
such example.

In the more general case of an odd composite, 
Table~\ref{tab:annex} shows $S^{35}_{i,j}$, highlighting $(i_q,j_q)$,
$(i'_q,j'_q)$, the condition $i+j=N-2$, and the lines corresponding to 
Proposition~\ref{prop:fibomod}.
The table also shows that the corners locally form a Pascal tiling.

\section{Appendix: Prime Powers}
\label{sec:primepowers}

We now consider more specifically the cases for which the converse of \eqref{eqn:PTTY}
fails.
Table~\ref{tab:swxy-grid-9} provides the example $N=3^2$, for which
$Y^N_{i,j}\not\equiv 0 \mymod{N}$ for all $(i,j)\in\mathcal{D}_{j \leq i}$.
We show that this result holds for all prime powers $N=q^\alpha$ with $q\neq 2$.

\begin{proposition}
Let $N=q^\alpha$ with $q$ prime and $\alpha\in\mathbb{N}^*$, and let $(i,j)\in\mathcal{D}_{j \leq i}$. 
Then
\[
v_q\!\left(\binom{i}{j}\right)
=
v_q\!\left(\binom{N-1-j}{N-1-i}\right)<\alpha.
\]
\label{prop:vq}
\end{proposition}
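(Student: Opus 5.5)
Two ingredients suffice: the identity of Proposition~\ref{prop:compobin}, valid for every $N\ge 2$,
\[
\binom{N-1}{i}\binom{i}{j}=\binom{N-1}{j}\binom{N-1-j}{N-1-i},
\]
and the claim that $v_q\bigl(\binom{q^\alpha-1}{k}\bigr)=0$ for all $0\le k\le q^\alpha-1$. Granting the claim, we take $q$-adic valuations on both sides of the identity. Since $v_q$ is additive on products of nonzero integers, and both factors on each side are nonzero for $(i,j)\in\mathcal{D}_{j\le i}$, the factors $\binom{N-1}{i}$ and $\binom{N-1}{j}$ contribute nothing. This gives $v_q\bigl(\binom{i}{j}\bigr)=v_q\bigl(\binom{N-1-j}{N-1-i}\bigr)$.

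To prove the claim, the quickest route is Lucas's theorem. In base $q$, $N-1=q^\alpha-1$ has all $\alpha$ digits equal to $q-1$. Every digit of $k$ is then at most the corresponding digit of $N-1$, so $\binom{N-1}{k}\equiv\prod\binom{q-1}{k_t}\not\equiv 0 \pmod q$, because each $\binom{q-1}{k_t}$ with $0\le k_t\le q-1$ is prime to $q$. An alternative is Kummer's theorem: adding $k$ and $N-1-k$ in base $q$ produces no carries, since their digits sum to $q-1$ in every position. The Kummer viewpoint is also what we use for the inequality.

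For the strict bound $v_q\bigl(\binom{i}{j}\bigr)<\alpha$, we apply Kummer's theorem. It states that $v_q\bigl(\binom{i}{j}\bigr)$ equals the number of carries when adding $j$ and $i-j$ in base $q$. Because $i\le N-1=q^\alpha-1$, the sum $i$ has at most $\alpha$ base-$q$ digits. A carry out of the top position $\alpha-1$ would make $i\ge q^\alpha$, so carries can occur only out of positions $0,\dots,\alpha-2$. There are at most $\alpha-1$ of them, hence $v_q\bigl(\binom{i}{j}\bigr)\le\alpha-1<\alpha$. If we prefer to avoid Kummer, Legendre's formula gives the same bound directly: $v_q\bigl(\binom{i}{j}\bigr)=\sum_{t\ge1}\bigl(\lfloor i/q^t\rfloor-\lfloor j/q^t\rfloor-\lfloor (i-j)/q^t\rfloor\bigr)$. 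Each summand is $0$ or $1$, and the terms with $t\ge\alpha$ vanish because $i<q^\alpha$.

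The main obstacle is bookkeeping rather than substance. We must check that all binomials involved are nonzero so that valuations are finite; this holds since $0\le N-1-i\le N-1-j$. We must also make sure we invoke Lucas's or Kummer's theorem in a form already cited by the paper. The case $\alpha=1$ is consistent: the bound reads $v_q=0$, which agrees with \eqref{prop:C}.
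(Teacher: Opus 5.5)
Your proof is correct. The bound $<\alpha$ is obtained the same way as in the paper, by counting carries in Kummer's theorem; your Legendre variant is an equivalent rephrasing. For the equality of valuations, however, you take a genuinely different route.

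The paper proves the equality directly from the digit-sum form of Kummer's theorem. The base-$q$ digits of $N-1-i$ and $N-1-j$ are $q-1-i_k$ and $q-1-j_k$, and the difference of the two arguments is still $d=i-j$. The quantity $\bigl(\sum_k j_k+\sum_k d_k-\sum_k i_k\bigr)/(q-1)$ is then visibly unchanged under this complementation.

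You instead take valuations in the identity of Proposition~\ref{prop:compobin}, and use Lucas's theorem to show that every $\binom{q^\alpha-1}{k}$ with $0\le k\le q^\alpha-1$ is prime to $q$. This is the prime-power analogue of the argument behind Proposition~\ref{prop:compomod}.

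The paper's version stays within a single tool and sets up the digit-complement notation it reuses in Proposition~\ref{prop:Ynprimepower}. Yours is shorter and more structural, and it yields more than equality of valuations. For odd $q$, Lucas gives $\binom{q^\alpha-1}{k}\equiv\prod_t\binom{q-1}{k_t}\equiv(-1)^{\sum_t k_t}=(-1)^k \pmod{q}$. Hence $\binom{N-1-j}{N-1-i}=u\binom{i}{j}$, where $u=\binom{N-1}{i}/\binom{N-1}{j}$ is a $q$-adic unit congruent to $(-1)^{i+j}$ modulo $q$. It follows that $Y^N_{i,j}=\binom{i}{j}\bigl(1+(-1)^{i+j}u\bigr)$ with $1+(-1)^{i+j}u\equiv 2$ modulo $q$. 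Combined with your bound $v_q\bigl(\binom{i}{j}\bigr)<\alpha$, this gives Proposition~\ref{prop:Ynprimepower} at once, with no need for Granville's congruence.
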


\begin{proof}
By Kummer's theorem, the $q$-adic valuation $v_q$ of the binomial coefficient $\binom{i}{j}$
is equal to the number of carries when adding $j$ and $i-j$ in base $q$.

Since $i<q^\alpha$, its base-$q$ expansion involves at most $\alpha$ digits,
hence the number of carries is at most $\alpha-1$.

Write the base-$q$ expansions
\[
i=\sum_{k=0}^{\alpha-1} i_k q^k,
\qquad
j=\sum_{k=0}^{\alpha-1} j_k q^k.
\]
Then
\[
N-1-i=\sum_{k=0}^{\alpha-1} (q-1-i_k) q^k,
\qquad
N-1-j=\sum_{k=0}^{\alpha-1} (q-1-j_k) q^k.
\]

Let $d=i-j=\sum_{k=0}^{\alpha-1} d_k q^k$. Using the digit-sum formulation of Kummer's theorem,
\[
v_q\!\left(\binom{i}{j}\right)
=
\frac{\sum_k j_k+\sum_k d_k-\sum_k i_k}{q-1}.
\]

Similarly,
\begin{align*}
v_q\!\left(\binom{N-1-j}{N-1-i}\right)
&=
\frac{\sum_k (q-1-i_k)+\sum_k d_k-\sum_k (q-1-j_k)}{q-1}\\
&=
\frac{-\sum_k i_k+\sum_k d_k+\sum_k j_k}{q-1}\\
&=
v_q\!\left(\binom{i}{j}\right).
\qedhere
\end{align*}
\end{proof}

\begin{proposition}
Let $N = q^\alpha$, with $q > 2$ prime. Then
\[
\forall (i,j) \in \mathcal{D}_{j \leq i}, \quad
Y^N_{i,j}\not\equiv 0 \mymod{N}.
\]
\label{prop:Ynprimepower}
\end{proposition}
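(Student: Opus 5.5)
We need to show that $Y^N_{i,j}=\binom{i}{j}+(-1)^{i+j}\binom{N-1-j}{N-1-i}\not\equiv 0 \pmod{q^\alpha}$.

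\textbf{Setup.} By Proposition~\ref{prop:vq}, both binomial coefficients have the same $q$-adic valuation $v<\alpha$. Write
\[
\binom{i}{j}=q^v a, \qquad \binom{N-1-j}{N-1-i}=q^v b,
\]
with $q\nmid a$ and $q\nmid b$. Then
\[
Y^N_{i,j}=q^v\bigl(a+(-1)^{i+j}b\bigr).
\]
Since $v<\alpha$, it suffices to prove that $q\nmid a+(-1)^{i+j}b$, that is, $a\equiv (-1)^{i+j}b \pmod q$. Indeed, this gives $a+(-1)^{i+j}b\equiv 2a\not\equiv 0 \pmod q$, because $q>2$. The total valuation of $Y^N_{i,j}$ is then exactly $v<\alpha$.

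\textbf{Key step: comparing unit parts.} We need to compare the unit parts of $\binom{i}{j}$ and $\binom{N-1-j}{N-1-i}$ modulo $q$. For this I would use the Anton/Granville extension of Lucas' theorem:
\[
\frac{1}{q^{v}}\binom{n}{m}\equiv (-1)^{v}\prod_k \frac{n_k!}{m_k!\,r_k!}\pmod q,
\]
where $n_k,m_k,r_k$ are the base-$q$ digits of $n$, $m$ and $r=n-m$.

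For the first coefficient, take $n=i$, $m=j$, $r=d=i-j$. For the second, take $n=N-1-j$, $m=N-1-i$, $r=d$. The digits of $N-1-j$ and $N-1-i$ are $q-1-j_k$ and $q-1-i_k$ (as in the proof of Proposition~\ref{prop:vq}), while the digits of $r$ are the same $d_k$ in both cases. Both carry the same sign $(-1)^v$, since the valuations coincide.

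Now apply Wilson's reflection: for $0\le x\le q-1$,
\[
x!\,(q-1-x)!\equiv (-1)^{x+1}\pmod q.
\]
This gives
\[
(q-1-j_k)!\equiv \frac{(-1)^{j_k+1}}{j_k!}, \qquad (q-1-i_k)!\equiv \frac{(-1)^{i_k+1}}{i_k!}.
\]
Substituting, the ratio of the two unit parts is
\[
\frac{b}{a}\equiv \prod_k \frac{(q-1-j_k)!\,i_k!}{(q-1-i_k)!\,j_k!}\cdot\frac{d_k!}{d_k!}.
\]
This is awkward as written, because the factors $d_k!$ sit in the denominators, so I would recompute directly. We have
\[
\frac{(q-1-j_k)!}{(q-1-i_k)!\,d_k!}\equiv \frac{(-1)^{j_k+1}\,i_k!}{(-1)^{i_k+1}\,j_k!\,d_k!},
\]
so $b\equiv a\prod_k(-1)^{i_k-j_k}=a(-1)^{\sum_k i_k-\sum_k j_k}$. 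Since $q$ is odd, $q^k\equiv 1 \pmod 2$, hence $\sum_k i_k\equiv i$ and $\sum_k j_k\equiv j \pmod 2$. Therefore $b\equiv(-1)^{i+j}a \pmod q$, as required.

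\textbf{Main obstacle.} The hardest step is correctly invoking the generalized Lucas congruence for the unit part. Pascal's original statement is not in the paper, so I would cite Granville~\cite{granville_arithmetic_1997}.

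An alternative that avoids it is to use Legendre-style factorial unit parts: $n!=q^{e}\,u(n)$, with
\[
u(n)\equiv(-1)^{e}\prod_k n_k!\pmod q.
\]
The computation above is exactly this, with the carry count cancelling.

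Care is also needed when digits $d_k$ arise with borrows. The digits of $d$ are not $i_k-j_k$ in general, but the formula only uses the three digit expansions separately, so this is fine.
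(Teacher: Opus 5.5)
Your proof is correct and follows the paper's route: Proposition~\ref{prop:vq} gives equal valuations $v<\alpha$, Granville's congruence expresses the unit parts, a digitwise comparison gives $b\equiv(-1)^{i+j}a \pmod q$, and $q>2$ gives $2a\not\equiv 0$. The one difference is in the digitwise comparison, and there your version is the more careful one. You apply Wilson's reflection $x!\,(q-1-x)!\equiv(-1)^{x+1}$ to each factorial separately, which stays valid at digits where subtracting $j$ from $i$ borrows. The paper instead rewrites $\frac{(q-1-j_k)!}{(q-1-i_k)!\,d_k!}$ as $\binom{q-1-j_k}{q-1-i_k}$ and invokes Proposition~\ref{prop:compomod}, which tacitly requires $d_k=i_k-j_k$ and so fails at such digits.
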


\begin{proof}
Let $\beta$  denotes the $q$-adic valuation of $\binom{i}{j}$. By Proposition~\ref{prop:vq}, 
we have $\beta < \alpha$.

We use congruence (2) in~\cite{granville_arithmetic_1997}, which gives
\begin{align*}
\frac{1}{q^\beta}\binom{i}{j}
&\equiv (-1)^\beta
\prod_{k=0}^{\alpha-1} \frac{i_k!}{j_k! d_k!}
\mymod{q}, \\[4pt]
\frac{1}{q^\beta}\binom{q^\alpha-1-j}{q^\alpha-1-i}
&\equiv (-1)^\beta
\prod_{k=0}^{\alpha-1}
\frac{(q-1-j_k)!}{(q-1-i_k)! d_k!}
\mymod{q}\\
&\equiv (-1)^\beta
\prod_{k=0}^{\alpha-1}
\binom{q-1-j_k}{q-1-i_k}
\mymod{q}.
\end{align*}

Using Proposition~\ref{prop:compomod} for terms $\binom{q-1-j_k}{q-1-i_k}$, we obtain
\[
\frac{1}{q^\beta}\binom{q^\alpha-1-j}{q^\alpha-1-i}
\equiv
\frac{1}{q^\beta}\binom{i}{j}~(-1)^{\sum_k (i_k+j_k)}
\mymod{q}.
\]

Since $q$ is odd, one checks that
\[
(-1)^{i+j} = (-1)^{\sum_k (i_k+j_k)}.
\]

Therefore, there exist $H_a, H_b \in \mathbb{Z}$ and 
$1 \le a, b \le q-1$ such that
\begin{align*}
\binom{i}{j} &= q^\beta(a + H_a q),\\
\binom{q^\alpha-1-j}{q^\alpha-1-i} &= q^\beta(b + H_b q),
\end{align*}
with
\[
a \equiv (-1)^{i+j} b \mymod{q}.
\]

We now consider
\begin{align*}
Y^{q^\alpha}_{i,j}
&= \binom{i}{j} + (-1)^{i+j}\binom{q^\alpha-1-j}{q^\alpha-1-i}\\
&= q^\beta \bigl(a + (-1)^{i+j} b\bigr)
+ q^{\beta+1} \bigl(H_a + (-1)^{i+j} H_b\bigr).
\end{align*}

We distinguish according to the parity of $i+j$.
If $i+j$ is odd, $a \equiv -b\mymod{q}$ and then $b=q-a$. 
If $i+j$ is even, then $a=b$.
In both cases,
\[
a + (-1)^{i+j} b \equiv 2a \not\equiv 0 \mymod{q},
\]
since $q\neq 2$ and $1 \le a \le q-1$.

Thus, the leading term
\[
q^\beta \bigl(a + (-1)^{i+j} b\bigr)
\]
is not divisible by $q^{\beta+1}$, hence
\[
v_q\!\left(Y^{q^\alpha}_{i,j}\right)=\beta < \alpha,
\]
which completes the proof.
\end{proof}

Kummer's theorem generalizes to multinomial coefficients, 
and can be used to prove that \eqref{eqn:PTTmultinomialY}
holds for $N=q^\alpha$ with $q$ an odd prime.

\section{Appendix: Tables}

\begin{table}[ht]
\caption{Overview of the main definitions and results.\\}
\centering
\renewcommand{\arraystretch}{0.95}
\small
\begin{tabular}{|
>{\raggedright\arraybackslash}p{0.35\linewidth}|
>{\raggedright\arraybackslash}p{0.60\linewidth}|}
\hline

\vspace{0pt}
$
\begin{aligned}
&\textbf{Definition~\ref{def:SijWij}}.\quad(i,j)\in\mathcal{D}_{j\leq i}\\[4pt]
&S^N_{i,j}=\binom{i}{j}{+}\binom{N{-}1{-}j}{N{-}1{-}i}\\[4pt]
&W^N_{i,j}=\binom{i}{j}{-}\binom{N{-}1{-}j}{N{-}1{-}i}
\end{aligned}
$
&
\vspace{0pt}
$
\begin{aligned}
&\textbf{Theorem~\ref{theo:PTT}}.\quad N>2\\
&N~\text{is prime}\\
&\Llra\forall~(i,j):\; \1{S^N_{i,j}}=\dfrac{1+(-1)^{i+j}}{2}\\
&\Llra\forall~(i,j):\; \1{W^N_{i,j}}=\frac{1-(-1)^{i+j}}{2}\\
&\Llra\forall~(i,j):\; \left|\1{S^N_{i,j}}-\1{W^N_{i,j}}\right|=1\\[4pt]
\end{aligned}
$
\\\hline
\vspace{0pt}
$
\begin{aligned}
&\textbf{Equation~\ref{eq:PTTlucas}}\\
&\\
&\\
&L_N-1=\sum_{i+j=N-2}S^N_{i,j}\\
\end{aligned}
$
&
\vspace{0pt}
$
\begin{aligned}
&\textbf{Theorem~\ref{theo:LN}}.\quad N>2\\
&N~\text{is prime}\\
&\Lra\forall~(i,j)\mid i+j=N-2:S^N_{i,j}\equiv 0\mymod{N}\\
&\Lra L_N-1\equiv0\mymod{N}.\\[4pt]
&\textbf{Theorem~\ref{theo:pseudoprime}}.\\
&N~\text{is a Fibonacci pseudoprime}\\
&\Lra \exists~(i,j)\mid i+j=N-2: S^N_{i,j}\not\equiv0\mymod{N}\\[4pt]
\end{aligned}
$
\\\hline
\vspace{0pt}
$
\begin{aligned}
&\textbf{Definition~\ref{def:XijYijfun}}.\\
&X^N_{i,j}=\binom{i}{j}{-}(-1)^{i{+j}}\binom{N{-}1{-}j}{N{-}1{-}i}\\[4pt]
&Y^N_{i,j}=\binom{i}{j}{+}(-1)^{i{+}j}\binom{N{-}1{-}j}{N{-}1{-}i}\\[4pt]
\end{aligned}
$
&
\vspace{0pt}
$
\begin{aligned}
&\textbf{Theorem~\ref{theo:PTTXY}}.\quad N>2\\
& N~\text{is prime}\\
&\Llra\forall~(i,j):\;\1{X^N_{i,j}}=0\\
&\Lra\forall~(i,j):\;\1{Y^N_{i,j}}=1\\
&\Llra\forall~(i,j):\;\left|\1{X^N_{i,j}}-\1{Y^N_{i,j}}\right|=1\\[4pt]
\end{aligned}
$
\\\hline
\multicolumn{2}{|l|}{
\vspace{0pt}
$
\begin{aligned}
&\\[-8pt]
&\textbf{Definition~\ref{def:Tnr}}.\quad \vec{k}\in\mathcal{D}_M\\
&r\in\{1,\dots,M\}\\[4pt]
\end{aligned}
$
\hspace{1.5cm}
$
\begin{aligned}
&\\[-4pt]
T^{N,r}_{\vec{k}}=\binom{N-1-k_r}{k_1,\dots,k_{r-1},N-1-K,k_{r+1},\dots,k_M}\\[8pt]
\end{aligned}
$
} \\\hline
\vspace{0pt}
$
\begin{aligned}
&\textbf{Definition~\ref{def:XYnr}}\\
&\\
&X^{N,r}_{\vec{k}}=\binom{K}{\vec{k}}{-}(-1)^{K+k_r}~T^{N,r}_{\vec{k}}\\[4pt]
&Y^{N,r}_{\vec{k}}=\binom{K}{\vec{k}}{+}(-1)^{K+k_r}~T^{N,r}_{\vec{k}}\\
&
\end{aligned}
$
&
\vspace{0pt}
$
\begin{aligned}
&\textbf{Theorem~\ref{theo:PTTmultinomial}}.\quad M\ge2,~N>2\\
&N~\text{is prime}\\
&\Llra\forall~\vec{k}:\;\1{X^{N,r}_{\vec{k}}}=0\\
&\Lra\forall~\vec{k}:\;\1{Y^{N,r}_{\vec{k}}}=1\\
&\Llra\forall~\vec{k}:\;\left|\1{X^{N,r}_{\vec{k}}}-\1{Y^{N,r}_{\vec{k}}}\right|=1\\[4pt]
\end{aligned}
$
\\\hline
\multicolumn{2}{|l|}{
\vspace{0pt}
$
\begin{aligned}
&\\[-8pt]
&\textbf{Definition~\ref{def:UV}}.\quad z=\mathcal{Z}(r,\vec{k})\\
&\left(U_{\vec{k}},V_{\vec{k}}\right)=\mathcal{F}_z\left(X^{N,r}_{\vec{k}},Y^{N,r}_{\vec{k}}\right)\\[4pt]
\end{aligned}
$
\hspace{1.5cm}
$
\begin{aligned}
&\\[-4pt]
U_{\vec{k}}=\frac{1-(-1)^z}{2}X^{N,r}_{\vec{k}}+\frac{1+(-1)^z}{2}Y^{N,r}_{\vec{k}}\\[4pt]
V_{\vec{k}}=\frac{1+(-1)^z}{2}X^{N,r}_{\vec{k}}+\frac{1-(-1)^z}{2}Y^{N,r}_{\vec{k}}\\[4pt]
\end{aligned}
$
} \\\hline
\vspace{0pt}
$
\begin{aligned}
&\textbf{Definitions~\ref{def:XYnr} and~\ref{def:UV}.}\\
&z=K\\[4pt]
&U_{\vec{k}}=\binom{K}{\vec{k}}+(-1)^{k_r}T^{N,r}_{\vec{k}}\\[4pt]
&V_{\vec{k}}=\binom{K}{\vec{k}}-(-1)^{k_r}T^{N,r}_{\vec{k}}\\
&
\end{aligned}
$
&
\vspace{0pt}
$
\begin{aligned}
&\textbf{Theorem~\ref{theo:PTTUVK}}.\quad M\ge 2,~N>2\\
&N~\text{is prime}\\
&\Llra \forall~\vec{k}:\; \1{U_{\vec{k}}}=\frac{1+(-1)^K}{2}\\
&\Llra \forall~\vec{k}:\; \1{V_{\vec{k}}}=\frac{1-(-1)^K}{2}\\
&\Llra \forall~\vec{k}:\; \left|\1{U_{\vec{k}}}-\1{V_{\vec{k}}}\right|=1\\[4pt]
\end{aligned}
$
\\
\hline
\end{tabular}
\label{tab:summary}
\end{table}

\begin{table}[ht]
\caption{Example for $N = 11$ illustrating the links between Pascal's triangle, Fibonacci and Lucas numbers, and the terms $A^N_{i,j}$, $B^N_{i,j}$ and $S^N_{i,j}$. The lower-right table highlights the missing term (bold blue cell) in the summation of the $S^N_{i,j}$ with $i+j = N-2$, namely the first term in the decomposition of $F_{N+1}$, leading to (\ref{eq:PTTlucas}), which states that $L_N = F_{N+1}+F_{N-1} = 1+\sum S^N_{N-2-j,j} = 1+\sum A^N_{N-2-j,j}+\sum B^N_{N-2-j,j}$.}
\centering
\begin{minipage}[t][6cm]{0.4\textwidth}
\vspace{0pt}
\centering
\scriptsize
\setlength{\tabcolsep}{2pt}
\renewcommand{\arraystretch}{0.9}
\begin{tabular}{@{}r|rrrrrrrrrrrrr@{}}
$i$ \\
$0$ & 1 & \ \  & \ \  & \ \  & \ \  & \ \  & \ \  & \ \  & \ \  & \ \  & \ \  & \ \  & \ \  \\
$1$ & 1 & 1 & \ \  & \ \  & \ \  & \ \  & \ \  & \ \  & \ \  & \ \  & \ \  & \ \  & \ \  \\
$2$ & 1 & 2 & 1 & \ \  & \ \  & \ \  & \ \  & \ \  & \ \  & \ \  & \ \  & \ \  & \ \  \\
$3$ & 1 & 3 & 3 & 1 & \ \  & \ \  & \ \  & \ \  & \ \  & \ \  & \ \  & \ \  & \ \  \\
$4$ & 1 & 4 & 6 & 4 & 1 & \ \  & \ \  & \ \  & \ \  & \ \  & \ \  & \ \  & \ \  \\
$5$ & 1 & 5 & 10 & 10 & \rcelld{5} & 1 & \ \  & \ \  & \ \  & \ \  & \ \  & \ \  & \ \  \\
$6$ & 1 & 6 & 15 & \rcelld{20} & 15 & \rcelld{6} & 1 & \ \  & \ \  & \ \  & \ \  & \ \  & \ \  \\
$7$ & 1 & 7 & \rcelld{21} & 35 & \rcelld{35} & 21 & 7 & 1 & \ \  & \ \  & \ \  & \ \  & \ \  \\
$8$ & 1 & \rcelld{8} & 28 & \rcelld{56} & 70 & 56 & 28 & 8 & 1 & \ \  & \ \  & \ \  & \ \  \\
$9$ & \rcelld{1} & 9 & \rcelld{36} & 84 & 126 & 126 & 84 & 36 & 9 & 1 & \ \  & \ \  & \ \  \\
$10$ & 1 & \rcelld{10} & 45 & 120 & 210 & 252 & 210 & 120 & 45 & 10 & 1 & \ \  & \ \  \\
$11$ & \bcellD{1} & 11 & 55 & 165 & 330 & 462 & 462 & 330 & 165 & 55 & 11 & 1 & \ \  \\
$12$ & 1 & 12 & 66 & 220 & 495 & 792 & 924 & 792 & 495 & 220 & 66 & 12 & 1 \\
\hline
\multicolumn{1}{r}{$j$} & 0 & 1 & 2 & 3 & 4 & 5 & 6 & 7 & 8 & 9 & 10 & 11 & 12
\end{tabular}
\parbox[c][2.6em][c]{\linewidth}{\centering $\binom{i}{j}=\frac{i!}{j!(i-j)!}$}
\end{minipage}
\hfill
\begin{minipage}[t][6cm]{0.5\textwidth}
\vspace{0pt}
\centering
\scriptsize

\scriptsize
\setlength{\tabcolsep}{1.7pt}
\renewcommand{\arraystretch}{0.9}
\begin{tabular}{r|lcccccccccccccc}
$n$  \\
0 & $~F_0$&=&0 &=& 0 \\
1 & $~F_1$&=&1 &=& 1 \\
2 & $~F_2$&=&1 &=& 1 \\
3 & $~F_3$&=&2 &=& 1 &+& 1 \\
4 & $~F_4$&=&3 &=& 1 &+& 2 \\
5 & $~F_5$&=&5 &=& 1 &+& 3 &+& 1 \\
6 & $~F_6$&=&8 &=& 1 &+& 4 &+& 3 \\
7 & $~F_7$&=&13 &=& 1 &+& 5 &+& 6 &+& 1 \\
8 & $~F_8$&=&21 &=& 1 &+& 6 &+& 10 &+& 4 \\
9 & $~F_9$&=&34 &=& 1 &+& 7 &+& 15 &+& 10 &+& 1 \\
10 & $~F_{10}$&=&55 &=& \rcelld{1} &+& \rcelld{8} &+& \rcelld{21} &+& \rcelld{20} &+& \rcelld{5} \\
11 & $~F_{11}$&=&89 &=& 1 &+& 9 &+& 28 &+& 35 &+& 15 &+& 1 \\
12 & $~F_{12}$&=&144 &=& \bcellD{1} &+& \rcelld{10} &+& \rcelld{36} &+& \rcelld{56} &+& \rcelld{35} &+& \rcelld{6} \\
\hline
\multicolumn{5}{r}{$k$~~}& 0 && 1 && 2 && 3 && 4 && 5
\end{tabular}

\parbox[c][2.6em][c]{\linewidth}{\centering $F_n = \sum_{k=0}^{\infty}\binom{n-1-k}{k}$}
\end{minipage}

\begin{minipage}[t][6cm]{0.4\textwidth}
\vspace{0pt}
\centering
\scriptsize
\setlength{\tabcolsep}{2pt}
\renewcommand{\arraystretch}{0.9}
\begin{tabular}{@{}r|rrrrrrrrrrr@{}}
$i$ \\
$0$ & 1 & \ \  & \ \  & \ \  & \ \  & \ \  & \ \  & \ \  & \ \  & \ \  & \ \  \\
$1$ & 1 & 1 & \ \  & \ \  & \ \  & \ \  & \ \  & \ \  & \ \  & \ \  & \ \  \\
$2$ & 1 & 2 & 1 & \ \  & \ \  & \ \  & \ \  & \ \  & \ \  & \ \  & \ \  \\
$3$ & 1 & 3 & 3 & 1 & \ \  & \ \  & \ \  & \ \  & \ \  & \ \  & \ \  \\
$4$ & 1 & 4 & 6 & 4 & 1 & \ \  & \ \  & \ \  & \ \  & \ \  & \ \  \\
$5$ & 1 & 5 & 10 & 10 & \rcelld{5} & 1 & \ \  & \ \  & \ \  & \ \  & \ \  \\
$6$ & 1 & 6 & 15 & \rcelld{20} & 15 & 6 & 1 & \ \  & \ \  & \ \  & \ \  \\
$7$ & 1 & 7 & \rcelld{21} & 35 & 35 & 21 & 7 & 1 & \ \  & \ \  & \ \  \\
$8$ & 1 & \rcelld{8} & 28 & 56 & 70 & 56 & 28 & 8 & 1 & \ \  & \ \  \\
$9$ & \rcelld{1} & 9 & 36 & 84 & 126 & 126 & 84 & 36 & 9 & 1 & \ \  \\
$10$ & 1 & 10 & 45 & 120 & 210 & 252 & 210 & 120 & 45 & 10 & 1 \\
\hline
\multicolumn{1}{r}{$j$} & 0 & 1 & 2 & 3 & 4 & 5 & 6 & 7 & 8 & 9 & 10
\end{tabular}
\parbox[c][2.6em][c]{\linewidth}{\centering $A^{\hspace{0.02cm}11}_{i,j}$}
\end{minipage}
\hfill
\begin{minipage}[t][6cm]{0.5\textwidth}
\vspace{0pt}
\centering
\scriptsize
\setlength{\tabcolsep}{2pt}
\renewcommand{\arraystretch}{0.9}
\begin{tabular}{@{}r|rrrrrrrrrrr@{}}
$i$ \\
$0$ & 1 & \ \  & \ \  & \ \  & \ \  & \ \  & \ \  & \ \  & \ \  & \ \  & \ \  \\
$1$ & 10 & 1 & \ \  & \ \  & \ \  & \ \  & \ \  & \ \  & \ \  & \ \  & \ \  \\
$2$ & 45 & 9 & 1 & \ \  & \ \  & \ \  & \ \  & \ \  & \ \  & \ \  & \ \  \\
$3$ & 120 & 36 & 8 & 1 & \ \  & \ \  & \ \  & \ \  & \ \  & \ \  & \ \  \\
$4$ & 210 & 84 & 28 & 7 & 1 & \ \  & \ \  & \ \  & \ \  & \ \  & \ \  \\
$5$ & 252 & 126 & 56 & 21 & \rcelld{6} & 1 & \ \  & \ \  & \ \  & \ \  & \ \  \\
$6$ & 210 & 126 & 70 & \rcelld{35} & 15 & 5 & 1 & \ \  & \ \  & \ \  & \ \  \\
$7$ & 120 & 84 & \rcelld{56} & 35 & 20 & 10 & 4 & 1 & \ \  & \ \  & \ \  \\
$8$ & 45 & \rcelld{36} & 28 & 21 & 15 & 10 & 6 & 3 & 1 & \ \  & \ \  \\
$9$ & \rcelld{10} & 9 & 8 & 7 & 6 & 5 & 4 & 3 & 2 & 1 & \ \  \\
$10$ & 1 & 1 & 1 & 1 & 1 & 1 & 1 & 1 & 1 & 1 & 1 \\
\hline
\multicolumn{1}{r}{$j$} & 0 & 1 & 2 & 3 & 4 & 5 & 6 & 7 & 8 & 9 & 10
\end{tabular}
\parbox[c][2.6em][c]{\linewidth}{\centering $B^{\hspace{0.02cm}11}_{i,j}$}
\end{minipage}

\begin{minipage}[t][6cm]{0.4\textwidth}
\vspace{0pt}
\centering
\scriptsize
\setlength{\tabcolsep}{2pt}
\renewcommand{\arraystretch}{0.9}
\begin{tabular}{@{}r|rrrrrrrrrrr@{}}
$i$ \\
$0$ & 2 & \ \  & \ \  & \ \  & \ \  & \ \  & \ \  & \ \  & \ \  & \ \  & \ \  \\
$1$ & 11 & 2 & \ \  & \ \  & \ \  & \ \  & \ \  & \ \  & \ \  & \ \  & \ \  \\
$2$ & 46 & 11 & 2 & \ \  & \ \  & \ \  & \ \  & \ \  & \ \  & \ \  & \ \  \\
$3$ & 121 & 39 & 11 & 2 & \ \  & \ \  & \ \  & \ \  & \ \  & \ \  & \ \  \\
$4$ & 211 & 88 & 34 & 11 & 2 & \ \  & \ \  & \ \  & \ \  & \ \  & \ \  \\
$5$ & 253 & 131 & 66 & 31 & \rcellD{11} & 2 & \ \  & \ \  & \ \  & \ \  & \ \  \\
$6$ & 211 & 132 & 85 & \rcellD{55} & 30 & 11 & 2 & \ \  & \ \  & \ \  & \ \  \\
$7$ & 121 & 91 & \rcellD{77} & 70 & 55 & 31 & 11 & 2 & \ \  & \ \  & \ \  \\
$8$ & 46 & \rcellD{44} & 56 & 77 & 85 & 66 & 34 & 11 & 2 & \ \  & \ \  \\
$9$ & \rcellD{11} & 18 & 44 & 91 & 132 & 131 & 88 & 39 & 11 & 2 & \ \  \\
$10$ & 2 & 11 & 46 & 121 & 211 & 253 & 211 & 121 & 46 & 11 & 2 \\
\hline
\multicolumn{1}{r}{$j$} & 0 & 1 & 2 & 3 & 4 & 5 & 6 & 7 & 8 & 9 & 10
\end{tabular}
\parbox[c][2.6em][c]{\linewidth}{\centering $S^{\hspace{0.02cm}11}_{i,j}=A^{\hspace{0.02cm}11}_{i,j}+B^{\hspace{0.02cm}11}_{i,j}$}
\end{minipage}
\hfill
\begin{minipage}[t][6cm]{0.5\textwidth}
\vspace{0pt}
\centering
\scriptsize

\scriptsize
\setlength{\tabcolsep}{1pt}
\renewcommand{\arraystretch}{0.9}
\begin{tabular}{rlccccccccccccccccc}
$L_{11}$&=& &$F_{10}$&=& & & &\rcelld{1} &+& \rcelld{8} &+& \rcelld{21} &+& \rcelld{20} &+& \rcelld{5} \\ \\
      & &+&$F_{12}$& &+~&\bcellD{1} &+& \rcelld{10} &+& \rcelld{36} &+& \rcelld{56} &+& \rcelld{35} &+& \rcelld{6} \\ \\ \hline\\

$L_{11}$&=&\multicolumn{2}{r}{$1+\sum_{i+j=9}S^{11}_{i,j}$}&=&& 1 &+&\rcellD{11} &+& \rcellD{44} &+& \rcellD{77} &+& \rcellD{55} &+& \rcellD{11} \\ \\ \hline\\
$L_{11}$&=& &1                         &=& &1& \\ \\
      & &+&$\sum_{i+j=9}A^{11}_{i,j}$& &+~& & &\rcelld{1} &+& \rcelld{8} &+& \rcelld{21} &+& \rcelld{20} &+& \rcelld{5}\\ \\
      & &+&$\sum_{i+j=9}B^{11}_{i,j}$& &+~& & &\rcelld{10} &+& \rcelld{36} &+& \rcelld{56} &+& \rcelld{35} &+& \rcelld{6}
\end{tabular}

\parbox[c][2.6em][c]{\linewidth}{}
\end{minipage}
\label{tab:fibo-grid}
\end{table}

\begin{table}[ht]
\centering
\tbl{
Values of $j$, $\gcd(j{+}1,N)$, and $S^N_{i,j}\mymod{N}$, where $i = N-2-j$, for $N = 705$, the smallest Fibonacci pseudoprime. Although $705 = 3\times5\times47$ is composite, it satisfies $L_{705} - 1 \equiv 0 \mymod{705}$, as do all prime numbers (Theorem~\ref{theo:LN}). If $\gcd(j{+}1,N)=1$ (for example, $j=135$, {\bf Green}), then $S^N_{N-2-j,j} \equiv 0 \mymod{N}$ (Proposition~\ref{prop:fibozero}), whereas the converse fails (for example, $j = 101$, {\bf Blue}). The cases $j = 2$, $j = 4$, $j = 46$ ,  $j = 350$, $j = 349$, $j = 328$ (i.e., $j = q-1$ and $j = (N-q)/2-1$ where $q$ is one of the prime factors of $N$) correspond to $(i_q,j_q)$ and  $(i'_q,j'_q)$ used in the proof of Theorem~\ref{theo:pseudoprime}, and are example   for which $\gcd(j{+}1)\ne 1$ and $S^N_{i,j}\not\equiv 0 \mymod{N}$ ({\bf Yellow}). 
}
{
\tiny
\setlength{\tabcolsep}{3pt}
\renewcommand{\arraystretch}{1.15}

\begin{tabular}{lcccccccccccccccccccc}
\toprule

{\boldmath $j$} & \bf\gcell{0} & \bf\gcell{1} & \bf\ycellD{2} & \bf\gcell{3} & \bf\ycellD{4} & \bf\ycell{5} & \bf\gcell{6} & \bf\gcell{7} & \bf\ycell{8} & \bf\ycell{9} & \bf\gcell{10} & \bf\ycell{11} & \bf\gcell{12} & \bf\gcell{13} & \bf\ycell{14} & \bf\gcell{15} & \bf\gcell{16} & \bf\bcell{17} & \bf\gcell{18} & \bf\bcell{19} \\
{\boldmath $\gcd(j{+}1,N)$} & \gcell{1} & \gcell{1} & \ycellD{3} & \gcell{1} & \ycellD{5} & \ycell{3} & \gcell{1} & \gcell{1} & \ycell{3} & \ycell{5} & \gcell{1} & \ycell{3} & \gcell{1} & \gcell{1} & \ycell{15} & \gcell{1} & \gcell{1} & \bcell{3} & \gcell{1} & \bcell{5} \\
{\boldmath $S_{i,j}\bmod N$} & \gcell{0} & \gcell{0} & \ycellD{235} & \gcell{0} & \ycellD{141} & \ycell{470} & \gcell{0} & \gcell{0} & \ycell{470} & \ycell{564} & \gcell{0} & \ycell{235} & \gcell{0} & \gcell{0} & \ycell{47} & \gcell{0} & \gcell{0} & \bcell{0} & \gcell{0} & \bcell{0} \\

\midrule

{\boldmath $j$} & \bf\bcell{20} & \bf\gcell{21} & \bf\gcell{22} & \bf\bcell{23} & \bf\ycell{24} & \bf\gcell{25} & \bf\ycell{26} & \bf\gcell{27} & \bf\gcell{28} & \bf\ycell{29} & \bf\gcell{30} & \bf\gcell{31} & \bf\bcell{32} & \bf\gcell{33} & \bf\ycell{34} & \bf\bcell{35} & \bf\gcell{36} & \bf\gcell{37} & \bf\bcell{38} & \bf\ycell{39} \\
{\boldmath $\gcd(j{+}1,N)$} & \bcell{3} & \gcell{1} & \gcell{1} & \bcell{3} & \ycell{5} & \gcell{1} & \ycell{3} & \gcell{1} & \gcell{1} & \ycell{15} & \gcell{1} & \gcell{1} & \bcell{3} & \gcell{1} & \ycell{5} & \bcell{3} & \gcell{1} & \gcell{1} & \bcell{3} & \ycell{5} \\
{\boldmath $S_{i,j}\bmod N$} & \bcell{0} & \gcell{0} & \gcell{0} & \bcell{0} & \ycell{282} & \gcell{0} & \ycell{235} & \gcell{0} & \gcell{0} & \ycell{141} & \gcell{0} & \gcell{0} & \bcell{0} & \gcell{0} & \ycell{564} & \bcell{0} & \gcell{0} & \gcell{0} & \bcell{0} & \ycell{282} \\

\midrule

{\boldmath $j$} & \bf\gcell{40} & \bf\bcell{41} & \bf\gcell{42} & \bf\gcell{43} & \bf\bcell{44} & \bf\gcell{45} & \bf\ycellD{46} & \bf\bcell{47} & \bf\gcell{48} & \bf\bcell{49} & \bf\bcell{50} & \bf\gcell{51} & \bf\gcell{52} & \bf\bcell{53} & \bf\bcell{54} & \bf\gcell{55} & \bf\ycell{56} & \bf\gcell{57} & \bf\gcell{58} & \bf\ycell{59} \\
{\boldmath $\gcd(j{+}1,N)$} & \gcell{1} & \bcell{3} & \gcell{1} & \gcell{1} & \bcell{15} & \gcell{1} & \ycellD{47} & \bcell{3} & \gcell{1} & \bcell{5} & \bcell{3} & \gcell{1} & \gcell{1} & \bcell{3} & \bcell{5} & \gcell{1} & \ycell{3} & \gcell{1} & \gcell{1} & \ycell{15} \\
{\boldmath $S_{i,j}\bmod N$} & \gcell{0} & \bcell{0} & \gcell{0} & \gcell{0} & \bcell{0} & \gcell{0} & \ycellD{15} & \bcell{0} & \gcell{0} & \bcell{0} & \bcell{0} & \gcell{0} & \gcell{0} & \bcell{0} & \bcell{0} & \gcell{0} & \ycell{235} & \gcell{0} & \gcell{0} & \ycell{470} \\

\midrule

{\boldmath $j$} & \bf\gcell{60} & \bf\gcell{61} & \bf\ycell{62} & \bf\gcell{63} & \bf\bcell{64} & \bf\ycell{65} & \bf\gcell{66} & \bf\gcell{67} & \bf\ycell{68} & \bf\bcell{69} & \bf\gcell{70} & \bf\bcell{71} & \bf\gcell{72} & \bf\gcell{73} & \bf\bcell{74} & \bf\gcell{75} & \bf\gcell{76} & \bf\bcell{77} & \bf\gcell{78} & \bf\ycell{79} \\
{\boldmath $\gcd(j{+}1,N)$} & \gcell{1} & \gcell{1} & \ycell{3} & \gcell{1} & \bcell{5} & \ycell{3} & \gcell{1} & \gcell{1} & \ycell{3} & \bcell{5} & \gcell{1} & \bcell{3} & \gcell{1} & \gcell{1} & \bcell{15} & \gcell{1} & \gcell{1} & \bcell{3} & \gcell{1} & \ycell{5} \\
{\boldmath $S_{i,j}\bmod N$} & \gcell{0} & \gcell{0} & \ycell{470} & \gcell{0} & \bcell{0} & \ycell{235} & \gcell{0} & \gcell{0} & \ycell{470} & \bcell{0} & \gcell{0} & \bcell{0} & \gcell{0} & \gcell{0} & \bcell{0} & \gcell{0} & \gcell{0} & \bcell{0} & \gcell{0} & \ycell{564} \\

\midrule

{\boldmath $j$} & \bf\ycell{80} & \bf\gcell{81} & \bf\gcell{82} & \bf\ycell{83} & \bf\ycell{84} & \bf\gcell{85} & \bf\ycell{86} & \bf\gcell{87} & \bf\gcell{88} & \bf\ycell{89} & \bf\gcell{90} & \bf\gcell{91} & \bf\ycell{92} & \bf\ycell{93} & \bf\bcell{94} & \bf\ycell{95} & \bf\gcell{96} & \bf\gcell{97} & \bf\bcell{98} & \bf\ycell{99} \\
{\boldmath $\gcd(j{+}1,N)$} & \ycell{3} & \gcell{1} & \gcell{1} & \ycell{3} & \ycell{5} & \gcell{1} & \ycell{3} & \gcell{1} & \gcell{1} & \ycell{15} & \gcell{1} & \gcell{1} & \ycell{3} & \ycell{47} & \bcell{5} & \ycell{3} & \gcell{1} & \gcell{1} & \bcell{3} & \ycell{5} \\
{\boldmath $S_{i,j}\bmod N$} & \ycell{235} & \gcell{0} & \gcell{0} & \ycell{235} & \ycell{141} & \gcell{0} & \ycell{470} & \gcell{0} & \gcell{0} & \ycell{188} & \gcell{0} & \gcell{0} & \ycell{235} & \ycell{90} & \bcell{0} & \ycell{470} & \gcell{0} & \gcell{0} & \bcell{0} & \ycell{141} \\

\midrule

{\boldmath $j$} & \bf\gcell{100} & \bf\bcellD{101} & \bf\gcell{102} & \bf\gcell{103} & \bf\bcell{104} & \bf\gcell{105} & \bf\gcell{106} & \bf\ycell{107} & \bf\gcell{108} & \bf\bcell{109} & \bf\bcell{110} & \bf\gcell{111} & \bf\gcell{112} & \bf\bcell{113} & \bf\bcell{114} & \bf\gcell{115} & \bf\bcell{116} & \bf\gcell{117} & \bf\gcell{118} & \bf\bcell{119} \\
{\boldmath $\gcd(j{+}1,N)$} & \gcell{1} & \bcellD{3} & \gcell{1} & \gcell{1} & \bcell{15} & \gcell{1} & \gcell{1} & \ycell{3} & \gcell{1} & \bcell{5} & \bcell{3} & \gcell{1} & \gcell{1} & \bcell{3} & \bcell{5} & \gcell{1} & \bcell{3} & \gcell{1} & \gcell{1} & \bcell{15} \\
{\boldmath $S_{i,j}\bmod N$} & \gcell{0} & \bcellD{0} & \gcell{0} & \gcell{0} & \bcell{0} & \gcell{0} & \gcell{0} & \ycell{235} & \gcell{0} & \bcell{0} & \bcell{0} & \gcell{0} & \gcell{0} & \bcell{0} & \bcell{0} & \gcell{0} & \bcell{0} & \gcell{0} & \gcell{0} & \bcell{0} \\

\midrule

{\boldmath $j$} & \bf\gcell{120} & \bf\gcell{121} & \bf\bcell{122} & \bf\gcell{123} & \bf\ycell{124} & \bf\bcell{125} & \bf\gcell{126} & \bf\gcell{127} & \bf\bcell{128} & \bf\ycell{129} & \bf\gcell{130} & \bf\bcell{131} & \bf\gcell{132} & \bf\gcell{133} & \bf\ycell{134} & \bf\gcellD{135} & \bf\gcell{136} & \bf\bcell{137} & \bf\gcell{138} & \bf\ycell{139} \\
{\boldmath $\gcd(j{+}1,N)$} & \gcell{1} & \gcell{1} & \bcell{3} & \gcell{1} & \ycell{5} & \bcell{3} & \gcell{1} & \gcell{1} & \bcell{3} & \ycell{5} & \gcell{1} & \bcell{3} & \gcell{1} & \gcell{1} & \ycell{15} & \gcellD{1} & \gcell{1} & \bcell{3} & \gcell{1} & \ycell{5} \\
{\boldmath $S_{i,j}\bmod N$} & \gcell{0} & \gcell{0} & \bcell{0} & \gcell{0} & \ycell{564} & \bcell{0} & \gcell{0} & \gcell{0} & \bcell{0} & \ycell{564} & \gcell{0} & \bcell{0} & \gcell{0} & \gcell{0} & \ycell{141} & \gcellD{0} & \gcell{0} & \bcell{0} & \gcell{0} & \ycell{423} \\

\midrule

{\boldmath $j$} & \bf\ycell{140} & \bf\gcell{141} & \bf\gcell{142} & \bf\bcell{143} & \bf\bcell{144} & \bf\gcell{145} & \bf\bcell{146} & \bf\gcell{147} & \bf\gcell{148} & \bf\ycell{149} & \bf\gcell{150} & \bf\gcell{151} & \bf\bcell{152} & \bf\gcell{153} & \bf\ycell{154} & \bf\bcell{155} & \bf\gcell{156} & \bf\gcell{157} & \bf\bcell{158} & \bf\ycell{159} \\
{\boldmath $\gcd(j{+}1,N)$} & \ycell{141} & \gcell{1} & \gcell{1} & \bcell{3} & \bcell{5} & \gcell{1} & \bcell{3} & \gcell{1} & \gcell{1} & \ycell{15} & \gcell{1} & \gcell{1} & \bcell{3} & \gcell{1} & \ycell{5} & \bcell{3} & \gcell{1} & \gcell{1} & \bcell{3} & \ycell{5} \\
{\boldmath $S_{i,j}\bmod N$} & \ycell{510} & \gcell{0} & \gcell{0} & \bcell{0} & \bcell{0} & \gcell{0} & \bcell{0} & \gcell{0} & \gcell{0} & \ycell{423} & \gcell{0} & \gcell{0} & \bcell{0} & \gcell{0} & \ycell{564} & \bcell{0} & \gcell{0} & \gcell{0} & \bcell{0} & \ycell{141} \\

\midrule

{\boldmath $j$} & \bf\gcell{160} & \bf\bcell{161} & \bf\gcell{162} & \bf\gcell{163} & \bf\ycell{164} & \bf\gcell{165} & \bf\gcell{166} & \bf\bcell{167} & \bf\gcell{168} & \bf\bcell{169} & \bf\bcell{170} & \bf\gcell{171} & \bf\gcell{172} & \bf\bcell{173} & \bf\bcell{174} & \bf\gcell{175} & \bf\bcell{176} & \bf\gcell{177} & \bf\gcell{178} & \bf\bcell{179} \\
{\boldmath $\gcd(j{+}1,N)$} & \gcell{1} & \bcell{3} & \gcell{1} & \gcell{1} & \ycell{15} & \gcell{1} & \gcell{1} & \bcell{3} & \gcell{1} & \bcell{5} & \bcell{3} & \gcell{1} & \gcell{1} & \bcell{3} & \bcell{5} & \gcell{1} & \bcell{3} & \gcell{1} & \gcell{1} & \bcell{15} \\
{\boldmath $S_{i,j}\bmod N$} & \gcell{0} & \bcell{0} & \gcell{0} & \gcell{0} & \ycell{423} & \gcell{0} & \gcell{0} & \bcell{0} & \gcell{0} & \bcell{0} & \bcell{0} & \gcell{0} & \gcell{0} & \bcell{0} & \bcell{0} & \gcell{0} & \bcell{0} & \gcell{0} & \gcell{0} & \bcell{0} \\

\midrule

{\boldmath $j$} & \bf\gcell{180} & \bf\gcell{181} & \bf\bcell{182} & \bf\gcell{183} & \bf\bcell{184} & \bf\bcell{185} & \bf\gcell{186} & \bf\ycell{187} & \bf\bcell{188} & \bf\bcell{189} & \bf\gcell{190} & \bf\bcell{191} & \bf\gcell{192} & \bf\gcell{193} & \bf\bcell{194} & \bf\gcell{195} & \bf\gcell{196} & \bf\bcell{197} & \bf\gcell{198} & \bf\bcell{199} \\
{\boldmath $\gcd(j{+}1,N)$} & \gcell{1} & \gcell{1} & \bcell{3} & \gcell{1} & \bcell{5} & \bcell{3} & \gcell{1} & \ycell{47} & \bcell{3} & \bcell{5} & \gcell{1} & \bcell{3} & \gcell{1} & \gcell{1} & \bcell{15} & \gcell{1} & \gcell{1} & \bcell{3} & \gcell{1} & \bcell{5} \\
{\boldmath $S_{i,j}\bmod N$} & \gcell{0} & \gcell{0} & \bcell{0} & \gcell{0} & \bcell{0} & \bcell{0} & \gcell{0} & \ycell{450} & \bcell{0} & \bcell{0} & \gcell{0} & \bcell{0} & \gcell{0} & \gcell{0} & \bcell{0} & \gcell{0} & \gcell{0} & \bcell{0} & \gcell{0} & \bcell{0} \\

\midrule

{\boldmath $j$} & \bf\bcell{200} & \bf\gcell{201} & \bf\gcell{202} & \bf\bcell{203} & \bf\ycell{204} & \bf\gcell{205} & \bf\bcell{206} & \bf\gcell{207} & \bf\gcell{208} & \bf\ycell{209} & \bf\gcell{210} & \bf\gcell{211} & \bf\bcell{212} & \bf\gcell{213} & \bf\ycell{214} & \bf\bcell{215} & \bf\gcell{216} & \bf\gcell{217} & \bf\ycell{218} & \bf\bcell{219} \\
{\boldmath $\gcd(j{+}1,N)$} & \bcell{3} & \gcell{1} & \gcell{1} & \bcell{3} & \ycell{5} & \gcell{1} & \bcell{3} & \gcell{1} & \gcell{1} & \ycell{15} & \gcell{1} & \gcell{1} & \bcell{3} & \gcell{1} & \ycell{5} & \bcell{3} & \gcell{1} & \gcell{1} & \ycell{3} & \bcell{5} \\
{\boldmath $S_{i,j}\bmod N$} & \bcell{0} & \gcell{0} & \gcell{0} & \bcell{0} & \ycell{282} & \gcell{0} & \bcell{0} & \gcell{0} & \gcell{0} & \ycell{423} & \gcell{0} & \gcell{0} & \bcell{0} & \gcell{0} & \ycell{564} & \bcell{0} & \gcell{0} & \gcell{0} & \ycell{235} & \bcell{0} \\

\midrule

{\boldmath $j$} & \bf\gcell{220} & \bf\ycell{221} & \bf\gcell{222} & \bf\gcell{223} & \bf\ycell{224} & \bf\gcell{225} & \bf\gcell{226} & \bf\ycell{227} & \bf\gcell{228} & \bf\bcell{229} & \bf\ycell{230} & \bf\gcell{231} & \bf\gcell{232} & \bf\bcell{233} & \bf\ycell{234} & \bf\gcell{235} & \bf\bcell{236} & \bf\gcell{237} & \bf\gcell{238} & \bf\bcell{239} \\
{\boldmath $\gcd(j{+}1,N)$} & \gcell{1} & \ycell{3} & \gcell{1} & \gcell{1} & \ycell{15} & \gcell{1} & \gcell{1} & \ycell{3} & \gcell{1} & \bcell{5} & \ycell{3} & \gcell{1} & \gcell{1} & \bcell{3} & \ycell{235} & \gcell{1} & \bcell{3} & \gcell{1} & \gcell{1} & \bcell{15} \\
{\boldmath $S_{i,j}\bmod N$} & \gcell{0} & \ycell{470} & \gcell{0} & \gcell{0} & \ycell{188} & \gcell{0} & \gcell{0} & \ycell{235} & \gcell{0} & \bcell{0} & \ycell{470} & \gcell{0} & \gcell{0} & \bcell{0} & \ycell{660} & \gcell{0} & \bcell{0} & \gcell{0} & \gcell{0} & \bcell{0} \\

\midrule

{\boldmath $j$} & \bf\gcell{240} & \bf\gcell{241} & \bf\ycell{242} & \bf\gcell{243} & \bf\bcell{244} & \bf\ycell{245} & \bf\gcell{246} & \bf\gcell{247} & \bf\ycell{248} & \bf\ycell{249} & \bf\gcell{250} & \bf\ycell{251} & \bf\gcell{252} & \bf\gcell{253} & \bf\ycell{254} & \bf\gcell{255} & \bf\gcell{256} & \bf\ycell{257} & \bf\gcell{258} & \bf\ycell{259} \\
{\boldmath $\gcd(j{+}1,N)$} & \gcell{1} & \gcell{1} & \ycell{3} & \gcell{1} & \bcell{5} & \ycell{3} & \gcell{1} & \gcell{1} & \ycell{3} & \ycell{5} & \gcell{1} & \ycell{3} & \gcell{1} & \gcell{1} & \ycell{15} & \gcell{1} & \gcell{1} & \ycell{3} & \gcell{1} & \ycell{5} \\
{\boldmath $S_{i,j}\bmod N$} & \gcell{0} & \gcell{0} & \ycell{235} & \gcell{0} & \bcell{0} & \ycell{235} & \gcell{0} & \gcell{0} & \ycell{470} & \ycell{423} & \gcell{0} & \ycell{470} & \gcell{0} & \gcell{0} & \ycell{658} & \gcell{0} & \gcell{0} & \ycell{470} & \gcell{0} & \ycell{282} \\

\midrule

{\boldmath $j$} & \bf\bcell{260} & \bf\gcell{261} & \bf\gcell{262} & \bf\bcell{263} & \bf\ycell{264} & \bf\gcell{265} & \bf\bcell{266} & \bf\gcell{267} & \bf\gcell{268} & \bf\ycell{269} & \bf\gcell{270} & \bf\gcell{271} & \bf\bcell{272} & \bf\gcell{273} & \bf\ycell{274} & \bf\bcell{275} & \bf\gcell{276} & \bf\gcell{277} & \bf\bcell{278} & \bf\ycell{279} \\
{\boldmath $\gcd(j{+}1,N)$} & \bcell{3} & \gcell{1} & \gcell{1} & \bcell{3} & \ycell{5} & \gcell{1} & \bcell{3} & \gcell{1} & \gcell{1} & \ycell{15} & \gcell{1} & \gcell{1} & \bcell{3} & \gcell{1} & \ycell{5} & \bcell{3} & \gcell{1} & \gcell{1} & \bcell{3} & \ycell{5} \\
{\boldmath $S_{i,j}\bmod N$} & \bcell{0} & \gcell{0} & \gcell{0} & \bcell{0} & \ycell{141} & \gcell{0} & \bcell{0} & \gcell{0} & \gcell{0} & \ycell{235} & \gcell{0} & \gcell{0} & \bcell{0} & \gcell{0} & \ycell{141} & \bcell{0} & \gcell{0} & \gcell{0} & \bcell{0} & \ycell{423} \\

\midrule

{\boldmath $j$} & \bf\gcell{280} & \bf\ycell{281} & \bf\gcell{282} & \bf\gcell{283} & \bf\ycell{284} & \bf\gcell{285} & \bf\gcell{286} & \bf\bcell{287} & \bf\gcell{288} & \bf\ycell{289} & \bf\bcell{290} & \bf\gcell{291} & \bf\gcell{292} & \bf\bcell{293} & \bf\bcell{294} & \bf\gcell{295} & \bf\bcell{296} & \bf\gcell{297} & \bf\gcell{298} & \bf\ycell{299} \\
{\boldmath $\gcd(j{+}1,N)$} & \gcell{1} & \ycell{141} & \gcell{1} & \gcell{1} & \ycell{15} & \gcell{1} & \gcell{1} & \bcell{3} & \gcell{1} & \ycell{5} & \bcell{3} & \gcell{1} & \gcell{1} & \bcell{3} & \bcell{5} & \gcell{1} & \bcell{3} & \gcell{1} & \gcell{1} & \ycell{15} \\
{\boldmath $S_{i,j}\bmod N$} & \gcell{0} & \ycell{375} & \gcell{0} & \gcell{0} & \ycell{282} & \gcell{0} & \gcell{0} & \bcell{0} & \gcell{0} & \ycell{141} & \bcell{0} & \gcell{0} & \gcell{0} & \bcell{0} & \bcell{0} & \gcell{0} & \bcell{0} & \gcell{0} & \gcell{0} & \ycell{235} \\

\midrule

{\boldmath $j$} & \bf\gcell{300} & \bf\gcell{301} & \bf\ycell{302} & \bf\gcell{303} & \bf\bcell{304} & \bf\ycell{305} & \bf\gcell{306} & \bf\gcell{307} & \bf\ycell{308} & \bf\bcell{309} & \bf\gcell{310} & \bf\ycell{311} & \bf\gcell{312} & \bf\gcell{313} & \bf\bcell{314} & \bf\gcell{315} & \bf\gcell{316} & \bf\bcell{317} & \bf\gcell{318} & \bf\bcell{319} \\
{\boldmath $\gcd(j{+}1,N)$} & \gcell{1} & \gcell{1} & \ycell{3} & \gcell{1} & \bcell{5} & \ycell{3} & \gcell{1} & \gcell{1} & \ycell{3} & \bcell{5} & \gcell{1} & \ycell{3} & \gcell{1} & \gcell{1} & \bcell{15} & \gcell{1} & \gcell{1} & \bcell{3} & \gcell{1} & \bcell{5} \\
{\boldmath $S_{i,j}\bmod N$} & \gcell{0} & \gcell{0} & \ycell{470} & \gcell{0} & \bcell{0} & \ycell{470} & \gcell{0} & \gcell{0} & \ycell{235} & \bcell{0} & \gcell{0} & \ycell{470} & \gcell{0} & \gcell{0} & \bcell{0} & \gcell{0} & \gcell{0} & \bcell{0} & \gcell{0} & \bcell{0} \\

\midrule

{\boldmath $j$} & \bf\bcell{320} & \bf\gcell{321} & \bf\gcell{322} & \bf\ycell{323} & \bf\bcell{324} & \bf\gcell{325} & \bf\ycell{326} & \bf\gcell{327} & \bf\ycellD{328} & \bf\ycell{329} & \bf\gcell{330} & \bf\gcell{331} & \bf\ycell{332} & \bf\gcell{333} & \bf\ycell{334} & \bf\ycell{335} & \bf\gcell{336} & \bf\gcell{337} & \bf\ycell{338} & \bf\ycell{339} \\
{\boldmath $\gcd(j{+}1,N)$} & \bcell{3} & \gcell{1} & \gcell{1} & \ycell{3} & \bcell{5} & \gcell{1} & \ycell{3} & \gcell{1} & \ycellD{47} & \ycell{15} & \gcell{1} & \gcell{1} & \ycell{3} & \gcell{1} & \ycell{5} & \ycell{3} & \gcell{1} & \gcell{1} & \ycell{3} & \ycell{5} \\
{\boldmath $S_{i,j}\bmod N$} & \bcell{0} & \gcell{0} & \gcell{0} & \ycell{235} & \bcell{0} & \gcell{0} & \ycell{235} & \gcell{0} & \ycellD{15} & \ycell{329} & \gcell{0} & \gcell{0} & \ycell{470} & \gcell{0} & \ycell{141} & \ycell{235} & \gcell{0} & \gcell{0} & \ycell{470} & \ycell{423} \\

\midrule

{\boldmath $j$} & \bf\gcell{340} & \bf\bcell{341} & \bf\gcell{342} & \bf\gcell{343} & \bf\bcell{344} & \bf\gcell{345} & \bf\gcell{346} & \bf\bcell{347} & \bf\gcell{348} & \bf\ycellD{349} & \bf\ycellD{350} & \bf\gcell{351} \\
{\boldmath $\gcd(j{+}1,N)$} & \gcell{1} & \bcell{3} & \gcell{1} & \gcell{1} & \bcell{15} & \gcell{1} & \gcell{1} & \bcell{3} & \gcell{1} & \ycellD{5} & \ycellD{3} & \gcell{1} \\
{\boldmath $S_{i,j}\bmod N$} & \gcell{0} & \bcell{0} & \gcell{0} & \gcell{0} & \bcell{0} & \gcell{0} & \gcell{0} & \bcell{0} & \gcell{0} & \ycellD{141} & \ycellD{235} & \gcell{0} \\

\bottomrule
\end{tabular}
}
\label{tab:fibo705}
\end{table}

\begin{table}[ht]
\caption{Example for $N = 7$. Representation of the different decompositions and the associated congruence properties ({\bf Green} if congruent to $0$ modulo $N$, {\bf Yellow} otherwise). Bold cells ({\bf Red}) correspond to indices such that $i+j = N-2$, appearing in the expression of $L_N-1$ (\ref{eq:PTTlucas}).}
\centering
\begin{minipage}[t][4.5cm]{0.3\textwidth}
\vspace{0pt}
\centering
\scriptsize
\setlength{\tabcolsep}{2pt}
\renewcommand{\arraystretch}{0.9}
\begin{tabular}{@{}r|rrrrrrr@{}}
$i$ \\
$0$ & 1 \\
$1$ & 1 & 1 \\
$2$ & 1 & 2 & 1 \\
$3$ & 1 & 3 & \rcelld{3} & 1 \\
$4$ & 1 & \rcelld{4} & 6 & 4 & 1 \\
$5$ & \rcelld{1} & 5 & 10 & 10 & 5 & 1 \\
$6$ & 1 & 6 & 15 & 20 & 15 & 6 & 1 \\
\hline
\multicolumn{1}{r}{$j$} & 0 & 1 & 2 & 3 & 4 & 5 & 6
\end{tabular}
\parbox[c][2.6em][c]{\linewidth}{\centering $A^{\hspace{0.02cm}7}_{i,j}=\binom{i}{j}$}
\end{minipage}
\hfill
\begin{minipage}[t][4.5cm]{0.3\textwidth}
\vspace{0pt}
\centering
\scriptsize
\setlength{\tabcolsep}{2pt}
\renewcommand{\arraystretch}{0.9}
\begin{tabular}{@{}r|rrrrrrr@{}}
$i$ \\
$0$ & 2 \\
$1$ & 7 & 2 \\
$2$ & 16 & 7 & 2 \\
$3$ & 21 & 13 & \rcellD{7} & 2 \\
$4$ & 16 & \rcellD{14} & 12 & 7 & 2 \\
$5$ & \rcellD{7} & 10 & 14 & 13 & 7 & 2 \\
$6$ & 2 & 7 & 16 & 21 & 16 & 7 & 2 \\
\hline
\multicolumn{1}{r}{$j$} & 0 & 1 & 2 & 3 & 4 & 5 & 6
\end{tabular}
\parbox[c][2.6em][c]{\linewidth}{\centering $S^{\hspace{0.02cm}7}_{i,j}=A^{\hspace{0.02cm}7}_{i,j}+B^{\hspace{0.02cm}7}_{i,j}$}
\end{minipage}
\hfill
\begin{minipage}[t][4.5cm]{0.3\textwidth}
\vspace{0pt}
\centering
\scriptsize
\setlength{\tabcolsep}{2pt}
\renewcommand{\arraystretch}{0.9}
\begin{tabular}{@{}r|rrrrrrr@{}}
$i$ \\
$0$ & 0 \\
$1$ & 7 & 0 \\
$2$ & -14 & 7 & 0 \\
$3$ & 21 & -7 & 7 & 0 \\
$4$ & -14 & 14 & 0 & 7 & 0 \\
$5$ & 7 & 0 & 14 & 7 & 7 & 0 \\
$6$ & 0 & 7 & 14 & 21 & 14 & 7 & 0 \\
\hline
\multicolumn{1}{r}{$j$} & 0 & 1 & 2 & 3 & 4 & 5 & 6
\end{tabular}
\parbox[c][2.6em][c]{\linewidth}{\centering $X^{\hspace{0.02cm}7}_{i,j}=A^{\hspace{0.02cm}7}_{i,j}{-}(-1)^{i{+}j}B^{\hspace{0.02cm}7}_{i,j}$}
\end{minipage}
\begin{minipage}[t][4.5cm]{0.3\textwidth}
\vspace{0pt}
\centering
\scriptsize
\setlength{\tabcolsep}{2pt}
\renewcommand{\arraystretch}{0.9}
\begin{tabular}{@{}r|rrrrrrr@{}}
$i$ \\
$0$ & 1 \\
$1$ & 6 & 1 \\
$2$ & 15 & 5 & 1 \\
$3$ & 20 & 10 & \rcelld{4} & 1 \\
$4$ & 15 & \rcelld{10} & 6 & 3 & 1 \\
$5$ & \rcelld{6} & 5 & 4 & 3 & 2 & 1 \\
$6$ & 1 & 1 & 1 & 1 & 1 & 1 & 1 \\
\hline
\multicolumn{1}{r}{$j$} & 0 & 1 & 2 & 3 & 4 & 5 & 6
\end{tabular}
\parbox[c][2.6em][c]{\linewidth}{\centering $B^{\hspace{0.02cm}7}_{i,j}=\binom{{\hspace{0.02cm}7}-1-j}{{\hspace{0.02cm}7}-1-i}$}
\end{minipage}
\hfill
\begin{minipage}[t][4.5cm]{0.3\textwidth}
\vspace{0pt}
\centering
\scriptsize
\setlength{\tabcolsep}{2pt}
\renewcommand{\arraystretch}{0.9}
\begin{tabular}{@{}r|rrrrrrr@{}}
$i$ \\
$0$ & 0 \\
$1$ & -5 & 0 \\
$2$ & -14 & -3 & 0 \\
$3$ & -19 & -7 & -1 & 0 \\
$4$ & -14 & -6 & 0 & 1 & 0 \\
$5$ & -5 & 0 & 6 & 7 & 3 & 0 \\
$6$ & 0 & 5 & 14 & 19 & 14 & 5 & 0 \\
\hline
\multicolumn{1}{r}{$j$} & 0 & 1 & 2 & 3 & 4 & 5 & 6
\end{tabular}
\parbox[c][2.6em][c]{\linewidth}{\centering $W^{\hspace{0.02cm}7}_{i,j}=A^{\hspace{0.02cm}7}_{i,j}-B^{\hspace{0.02cm}7}_{i,j}$}
\end{minipage}
\hfill
\begin{minipage}[t][4.5cm]{0.3\textwidth}
\vspace{0pt}
\centering
\scriptsize
\setlength{\tabcolsep}{2pt}
\renewcommand{\arraystretch}{0.9}
\begin{tabular}{@{}r|rrrrrrr@{}}
$i$ \\
$0$ & 2 \\
$1$ & -5 & 2 \\
$2$ & 16 & -3 & 2 \\
$3$ & -19 & 13 & -1 & 2 \\
$4$ & 16 & -6 & 12 & 1 & 2 \\
$5$ & -5 & 10 & 6 & 13 & 3 & 2 \\
$6$ & 2 & 5 & 16 & 19 & 16 & 5 & 2 \\
\hline
\multicolumn{1}{r}{$j$} & 0 & 1 & 2 & 3 & 4 & 5 & 6
\end{tabular}
\parbox[c][2.6em][c]{\linewidth}{\centering $Y^{\hspace{0.02cm}7}_{i,j}=A^{\hspace{0.02cm}7}_{i,j}{+}(-1)^{i{+}j}B^{\hspace{0.02cm}7}_{i,j}$}
\end{minipage}
\begin{minipage}[t][4.5cm]{0.3\textwidth}
\vspace{0pt}
\centering
\scriptsize
\setlength{\tabcolsep}{2pt}
\renewcommand{\arraystretch}{0.9}
\begin{tabular}{@{}r|rrrrrrr@{}}
$i$ \\
$0$ & \ycell{1} \\
$1$ & \ycell{1} & \ycell{1} \\
$2$ & \ycell{1} & \ycell{2} & \ycell{1} \\
$3$ & \ycell{1} & \ycell{3} & \rcelld{3} & \ycell{1} \\
$4$ & \ycell{1} & \rcelld{4} & \ycell{6} & \ycell{4} & \ycell{1} \\
$5$ & \rcelld{1} & \ycell{5} & \ycell{3} & \ycell{3} & \ycell{5} & \ycell{1} \\
$6$ & \ycell{1} & \ycell{6} & \ycell{1} & \ycell{6} & \ycell{1} & \ycell{6} & \ycell{1} \\
\hline
\multicolumn{1}{r}{$j$} & 0 & 1 & 2 & 3 & 4 & 5 & 6
\end{tabular}
\parbox[c][2.6em][c]{\linewidth}{\centering $A^{\hspace{0.02cm}7}_{i,j}\ \mathrm{mod}\ {\hspace{0.02cm}7}$}
\end{minipage}
\hfill
\begin{minipage}[t][4.5cm]{0.3\textwidth}
\vspace{0pt}
\centering
\scriptsize
\setlength{\tabcolsep}{2pt}
\renewcommand{\arraystretch}{0.9}
\begin{tabular}{@{}r|rrrrrrr@{}}
$i$ \\
$0$ & \ycell{2} \\
$1$ & \gcell{0} & \ycell{2} \\
$2$ & \ycell{2} & \gcell{0} & \ycell{2} \\
$3$ & \gcell{0} & \ycell{6} & \rcellD{0} & \ycell{2} \\
$4$ & \ycell{2} & \rcellD{0} & \ycell{5} & \gcell{0} & \ycell{2} \\
$5$ & \rcellD{0} & \ycell{3} & \gcell{0} & \ycell{6} & \gcell{0} & \ycell{2} \\
$6$ & \ycell{2} & \gcell{0} & \ycell{2} & \gcell{0} & \ycell{2} & \gcell{0} & \ycell{2} \\
\hline
\multicolumn{1}{r}{$j$} & 0 & 1 & 2 & 3 & 4 & 5 & 6
\end{tabular}
\parbox[c][2.6em][c]{\linewidth}{\centering $S^{\hspace{0.02cm}7}_{i,j}\ \mathrm{mod}\ {\hspace{0.02cm}7}$}
\end{minipage}
\hfill
\begin{minipage}[t][4.5cm]{0.3\textwidth}
\vspace{0pt}
\centering
\scriptsize
\setlength{\tabcolsep}{2pt}
\renewcommand{\arraystretch}{0.9}
\begin{tabular}{@{}r|rrrrrrr@{}}
$i$ \\
$0$ & \gcell{0} \\
$1$ & \gcell{0} & \gcell{0} \\
$2$ & \gcell{0} & \gcell{0} & \gcell{0} \\
$3$ & \gcell{0} & \gcell{0} & \gcell{0} & \gcell{0} \\
$4$ & \gcell{0} & \gcell{0} & \gcell{0} & \gcell{0} & \gcell{0} \\
$5$ & \gcell{0} & \gcell{0} & \gcell{0} & \gcell{0} & \gcell{0} & \gcell{0} \\
$6$ & \gcell{0} & \gcell{0} & \gcell{0} & \gcell{0} & \gcell{0} & \gcell{0} & \gcell{0} \\
\hline
\multicolumn{1}{r}{$j$} & 0 & 1 & 2 & 3 & 4 & 5 & 6
\end{tabular}
\parbox[c][2.6em][c]{\linewidth}{\centering $X^{\hspace{0.02cm}7}_{i,j}\ \mathrm{mod}\ {\hspace{0.02cm}7}$}
\end{minipage}
\begin{minipage}[t][4.5cm]{0.3\textwidth}
\vspace{0pt}
\centering
\scriptsize
\setlength{\tabcolsep}{2pt}
\renewcommand{\arraystretch}{0.9}
\begin{tabular}{@{}r|rrrrrrr@{}}
$i$ \\
$0$ & \ycell{1} \\
$1$ & \ycell{6} & \ycell{1} \\
$2$ & \ycell{1} & \ycell{5} & \ycell{1} \\
$3$ & \ycell{6} & \ycell{3} & \rcelld{4} & \ycell{1} \\
$4$ & \ycell{1} & \rcelld{3} & \ycell{6} & \ycell{3} & \ycell{1} \\
$5$ & \rcelld{6} & \ycell{5} & \ycell{4} & \ycell{3} & \ycell{2} & \ycell{1} \\
$6$ & \ycell{1} & \ycell{1} & \ycell{1} & \ycell{1} & \ycell{1} & \ycell{1} & \ycell{1} \\
\hline
\multicolumn{1}{r}{$j$} & 0 & 1 & 2 & 3 & 4 & 5 & 6
\end{tabular}
\parbox[c][2.6em][c]{\linewidth}{\centering $B^{\hspace{0.02cm}7}_{i,j}\ \mathrm{mod}\ {\hspace{0.02cm}7}$}
\end{minipage}
\hfill
\begin{minipage}[t][4.5cm]{0.3\textwidth}
\vspace{0pt}
\centering
\scriptsize
\setlength{\tabcolsep}{2pt}
\renewcommand{\arraystretch}{0.9}
\begin{tabular}{@{}r|rrrrrrr@{}}
$i$ \\
$0$ & \gcell{0} \\
$1$ & \ycell{2} & \gcell{0} \\
$2$ & \gcell{0} & \ycell{4} & \gcell{0} \\
$3$ & \ycell{2} & \gcell{0} & \ycell{6} & \gcell{0} \\
$4$ & \gcell{0} & \ycell{1} & \gcell{0} & \ycell{1} & \gcell{0} \\
$5$ & \ycell{2} & \gcell{0} & \ycell{6} & \gcell{0} & \ycell{3} & \gcell{0} \\
$6$ & \gcell{0} & \ycell{5} & \gcell{0} & \ycell{5} & \gcell{0} & \ycell{5} & \gcell{0} \\
\hline
\multicolumn{1}{r}{$j$} & 0 & 1 & 2 & 3 & 4 & 5 & 6
\end{tabular}
\parbox[c][2.6em][c]{\linewidth}{\centering $W^{\hspace{0.02cm}7}_{i,j}\ \mathrm{mod}\ {\hspace{0.02cm}7}$}
\end{minipage}
\hfill
\begin{minipage}[t][4.5cm]{0.3\textwidth}
\vspace{0pt}
\centering
\scriptsize
\setlength{\tabcolsep}{2pt}
\renewcommand{\arraystretch}{0.9}
\begin{tabular}{@{}r|rrrrrrr@{}}
$i$ \\
$0$ & \ycell{2} \\
$1$ & \ycell{2} & \ycell{2} \\
$2$ & \ycell{2} & \ycell{4} & \ycell{2} \\
$3$ & \ycell{2} & \ycell{6} & \ycell{6} & \ycell{2} \\
$4$ & \ycell{2} & \ycell{1} & \ycell{5} & \ycell{1} & \ycell{2} \\
$5$ & \ycell{2} & \ycell{3} & \ycell{6} & \ycell{6} & \ycell{3} & \ycell{2} \\
$6$ & \ycell{2} & \ycell{5} & \ycell{2} & \ycell{5} & \ycell{2} & \ycell{5} & \ycell{2} \\
\hline
\multicolumn{1}{r}{$j$} & 0 & 1 & 2 & 3 & 4 & 5 & 6
\end{tabular}
\parbox[c][2.6em][c]{\linewidth}{\centering $Y^{\hspace{0.02cm}7}_{i,j}\ \mathrm{mod}\ {\hspace{0.02cm}7}$}
\end{minipage}
\label{tab:swxy-grid-7}
\end{table} 

\begin{table}[ht]
\caption{Example for $N = 9 = 3^2$. Representation of the different decompositions and the associated congruence properties ({\bf Green} if congruent to $0$ modulo $N$, {\bf Yellow} otherwise). Bold cells ({\bf Red}) correspond to indices such that $i+j = N-2$, appearing in the expression of $L_N-1$ (\ref{eq:PTTlucas}).}
\centering
\begin{minipage}[t][4.5cm]{0.30\textwidth}
\vspace{0pt}
\centering
\scriptsize
\setlength{\tabcolsep}{2pt}
\renewcommand{\arraystretch}{0.9}
\begin{tabular}{@{}r|rrrrrrrrr@{}}
$i$ \\
$0$ & 1 \\
$1$ & 1 & 1 \\
$2$ & 1 & 2 & 1 \\
$3$ & 1 & 3 & 3 & 1 \\
$4$ & 1 & 4 & 6 & \rcelld{4} & 1 \\
$5$ & 1 & 5 & \rcelld{10} & 10 & 5 & 1 \\
$6$ & 1 & \rcelld{6} & 15 & 20 & 15 & 6 & 1 \\
$7$ & \rcelld{1} & 7 & 21 & 35 & 35 & 21 & 7 & 1 \\
$8$ & 1 & 8 & 28 & 56 & 70 & 56 & 28 & 8 & 1 \\
\hline
\multicolumn{1}{r}{$j$} & 0 & 1 & 2 & 3 & 4 & 5 & 6 & 7 & 8
\end{tabular}
\parbox[c][2.6em][c]{\linewidth}{\centering $A^{\hspace{0.02cm}9}_{i,j}=\binom{i}{j}$}
\end{minipage}
\hfill
\begin{minipage}[t][4.5cm]{0.30\textwidth}
\vspace{0pt}
\centering
\scriptsize
\setlength{\tabcolsep}{2pt}
\renewcommand{\arraystretch}{0.9}
\begin{tabular}{@{}r|rrrrrrrrr@{}}
$i$ \\
$0$ & 2 \\
$1$ & 9 & 2 \\
$2$ & 29 & 9 & 2 \\
$3$ & 57 & 24 & 9 & 2 \\
$4$ & 71 & 39 & 21 & \rcellD{9} & 2 \\
$5$ & 57 & 40 & \rcellD{30} & 20 & 9 & 2 \\
$6$ & 29 & \rcellD{27} & 30 & 30 & 21 & 9 & 2 \\
$7$ & \rcellD{9} & 14 & 27 & 40 & 39 & 24 & 9 & 2 \\
$8$ & 2 & 9 & 29 & 57 & 71 & 57 & 29 & 9 & 2 \\
\hline
\multicolumn{1}{r}{$j$} & 0 & 1 & 2 & 3 & 4 & 5 & 6 & 7 & 8
\end{tabular}
\parbox[c][2.6em][c]{\linewidth}{\centering $S^{\hspace{0.02cm}9}_{i,j}=A^{\hspace{0.02cm}9}_{i,j}+B^{\hspace{0.02cm}9}_{i,j}$}
\end{minipage}
\hfill
\begin{minipage}[t][4.5cm]{0.30\textwidth}
\vspace{0pt}
\centering
\scriptsize
\setlength{\tabcolsep}{2pt}
\renewcommand{\arraystretch}{0.9}
\begin{tabular}{@{}r|rrrrrrrrr@{}}
$i$ \\
$0$ & 0 \\
$1$ & 9 & 0 \\
$2$ & -27 & 9 & 0 \\
$3$ & 57 & -18 & 9 & 0 \\
$4$ & -69 & 39 & -9 & 9 & 0 \\
$5$ & 57 & -30 & 30 & 0 & 9 & 0 \\
$6$ & -27 & 27 & 0 & 30 & 9 & 9 & 0 \\
$7$ & 9 & 0 & 27 & 30 & 39 & 18 & 9 & 0 \\
$8$ & 0 & 9 & 27 & 57 & 69 & 57 & 27 & 9 & 0 \\
\hline
\multicolumn{1}{r}{$j$} & 0 & 1 & 2 & 3 & 4 & 5 & 6 & 7 & 8
\end{tabular}
\parbox[c][2.6em][c]{\linewidth}{\centering $X^{\hspace{0.02cm}9}_{i,j}=A^{\hspace{0.02cm}9}_{i,j}{-}(-1)^{i{+}j}B^{\hspace{0.02cm}9}_{i,j}$}
\end{minipage}
\begin{minipage}[t][4.5cm]{0.30\textwidth}
\vspace{0pt}
\centering
\scriptsize
\setlength{\tabcolsep}{2pt}
\renewcommand{\arraystretch}{0.9}
\begin{tabular}{@{}r|rrrrrrrrr@{}}
$i$ \\
$0$ & 1 \\
$1$ & 8 & 1 \\
$2$ & 28 & 7 & 1 \\
$3$ & 56 & 21 & 6 & 1 \\
$4$ & 70 & 35 & 15 & \rcelld{5} & 1 \\
$5$ & 56 & 35 & \rcelld{20} & 10 & 4 & 1 \\
$6$ & 28 & \rcelld{21} & 15 & 10 & 6 & 3 & 1 \\
$7$ & \rcelld{8} & 7 & 6 & 5 & 4 & 3 & 2 & 1 \\
$8$ & 1 & 1 & 1 & 1 & 1 & 1 & 1 & 1 & 1 \\
\hline
\multicolumn{1}{r}{$j$} & 0 & 1 & 2 & 3 & 4 & 5 & 6 & 7 & 8
\end{tabular}
\parbox[c][2.6em][c]{\linewidth}{\centering $B^{\hspace{0.02cm}9}_{i,j}=\binom{{\hspace{0.02cm}9}-1-j}{{\hspace{0.02cm}9}-1-i}$}
\end{minipage}
\hfill
\begin{minipage}[t][4.5cm]{0.30\textwidth}
\vspace{0pt}
\centering
\scriptsize
\setlength{\tabcolsep}{2pt}
\renewcommand{\arraystretch}{0.9}
\begin{tabular}{@{}r|rrrrrrrrr@{}}
$i$ \\
$0$ & 0 \\
$1$ & -7 & 0 \\
$2$ & -27 & -5 & 0 \\
$3$ & -55 & -18 & -3 & 0 \\
$4$ & -69 & -31 & -9 & -1 & 0 \\
$5$ & -55 & -30 & -10 & 0 & 1 & 0 \\
$6$ & -27 & -15 & 0 & 10 & 9 & 3 & 0 \\
$7$ & -7 & 0 & 15 & 30 & 31 & 18 & 5 & 0 \\
$8$ & 0 & 7 & 27 & 55 & 69 & 55 & 27 & 7 & 0 \\
\hline
\multicolumn{1}{r}{$j$} & 0 & 1 & 2 & 3 & 4 & 5 & 6 & 7 & 8
\end{tabular}
\parbox[c][2.6em][c]{\linewidth}{\centering $W^{\hspace{0.02cm}9}_{i,j}=A^{\hspace{0.02cm}9}_{i,j}-B^{\hspace{0.02cm}9}_{i,j}$}
\end{minipage}
\hfill
\begin{minipage}[t][4.5cm]{0.30\textwidth}
\vspace{0pt}
\centering
\scriptsize
\setlength{\tabcolsep}{2pt}
\renewcommand{\arraystretch}{0.9}
\begin{tabular}{@{}r|rrrrrrrrr@{}}
$i$ \\
$0$ & 2 \\
$1$ & -7 & 2 \\
$2$ & 29 & -5 & 2 \\
$3$ & -55 & 24 & -3 & 2 \\
$4$ & 71 & -31 & 21 & -1 & 2 \\
$5$ & -55 & 40 & -10 & 20 & 1 & 2 \\
$6$ & 29 & -15 & 30 & 10 & 21 & 3 & 2 \\
$7$ & -7 & 14 & 15 & 40 & 31 & 24 & 5 & 2 \\
$8$ & 2 & 7 & 29 & 55 & 71 & 55 & 29 & 7 & 2 \\
\hline
\multicolumn{1}{r}{$j$} & 0 & 1 & 2 & 3 & 4 & 5 & 6 & 7 & 8
\end{tabular}
\parbox[c][2.6em][c]{\linewidth}{\centering $Y^{\hspace{0.02cm}9}_{i,j}=A^{\hspace{0.02cm}9}_{i,j}{+}(-1)^{i{+}j}B^{\hspace{0.02cm}9}_{i,j}$}
\end{minipage}
\begin{minipage}[t][4.5cm]{0.30\textwidth}
\vspace{0pt}
\centering
\scriptsize
\setlength{\tabcolsep}{2pt}
\renewcommand{\arraystretch}{0.9}
\begin{tabular}{@{}r|rrrrrrrrr@{}}
$i$ \\
$0$ & \ycell{1} \\
$1$ & \ycell{1} & \ycell{1} \\
$2$ & \ycell{1} & \ycell{2} & \ycell{1} \\
$3$ & \ycell{1} & \ycell{3} & \ycell{3} & \ycell{1} \\
$4$ & \ycell{1} & \ycell{4} & \ycell{6} & \rcelld{4} & \ycell{1} \\
$5$ & \ycell{1} & \ycell{5} & \rcelld{1} & \ycell{1} & \ycell{5} & \ycell{1} \\
$6$ & \ycell{1} & \rcelld{6} & \ycell{6} & \ycell{2} & \ycell{6} & \ycell{6} & \ycell{1} \\
$7$ & \rcelld{1} & \ycell{7} & \ycell{3} & \ycell{8} & \ycell{8} & \ycell{3} & \ycell{7} & \ycell{1} \\
$8$ & \ycell{1} & \ycell{8} & \ycell{1} & \ycell{2} & \ycell{7} & \ycell{2} & \ycell{1} & \ycell{8} & \ycell{1} \\
\hline
\multicolumn{1}{r}{$j$} & 0 & 1 & 2 & 3 & 4 & 5 & 6 & 7 & 8
\end{tabular}
\parbox[c][2.6em][c]{\linewidth}{\centering $A^{\hspace{0.02cm}9}_{i,j}\ \mathrm{mod}\ {\hspace{0.02cm}9}$}
\end{minipage}
\hfill
\begin{minipage}[t][4.5cm]{0.30\textwidth}
\vspace{0pt}
\centering
\scriptsize
\setlength{\tabcolsep}{2pt}
\renewcommand{\arraystretch}{0.9}
\begin{tabular}{@{}r|rrrrrrrrr@{}}
$i$ \\
$0$ & \ycell{2} \\
$1$ & \gcell{0} & \ycell{2} \\
$2$ & \ycell{2} & \gcell{0} & \ycell{2} \\
$3$ & \ycell{3} & \ycell{6} & \gcell{0} & \ycell{2} \\
$4$ & \ycell{8} & \ycell{3} & \ycell{3} & \rcellD{0} & \ycell{2} \\
$5$ & \ycell{3} & \ycell{4} & \rcellD{3} & \ycell{2} & \gcell{0} & \ycell{2} \\
$6$ & \ycell{2} & \rcellD{0} & \ycell{3} & \ycell{3} & \ycell{3} & \gcell{0} & \ycell{2} \\
$7$ & \rcellD{0} & \ycell{5} & \gcell{0} & \ycell{4} & \ycell{3} & \ycell{6} & \gcell{0} & \ycell{2} \\
$8$ & \ycell{2} & \gcell{0} & \ycell{2} & \ycell{3} & \ycell{8} & \ycell{3} & \ycell{2} & \gcell{0} & \ycell{2} \\
\hline
\multicolumn{1}{r}{$j$} & 0 & 1 & 2 & 3 & 4 & 5 & 6 & 7 & 8
\end{tabular}
\parbox[c][2.6em][c]{\linewidth}{\centering $S^{\hspace{0.02cm}9}_{i,j}\ \mathrm{mod}\ {\hspace{0.02cm}9}$}
\end{minipage}
\hfill
\begin{minipage}[t][4.5cm]{0.30\textwidth}
\vspace{0pt}
\centering
\scriptsize
\setlength{\tabcolsep}{2pt}
\renewcommand{\arraystretch}{0.9}
\begin{tabular}{@{}r|rrrrrrrrr@{}}
$i$ \\
$0$ & \gcell{0} \\
$1$ & \gcell{0} & \gcell{0} \\
$2$ & \gcell{0} & \gcell{0} & \gcell{0} \\
$3$ & \ycell{3} & \gcell{0} & \gcell{0} & \gcell{0} \\
$4$ & \ycell{3} & \ycell{3} & \gcell{0} & \gcell{0} & \gcell{0} \\
$5$ & \ycell{3} & \ycell{6} & \ycell{3} & \gcell{0} & \gcell{0} & \gcell{0} \\
$6$ & \gcell{0} & \gcell{0} & \gcell{0} & \ycell{3} & \gcell{0} & \gcell{0} & \gcell{0} \\
$7$ & \gcell{0} & \gcell{0} & \gcell{0} & \ycell{3} & \ycell{3} & \gcell{0} & \gcell{0} & \gcell{0} \\
$8$ & \gcell{0} & \gcell{0} & \gcell{0} & \ycell{3} & \ycell{6} & \ycell{3} & \gcell{0} & \gcell{0} & \gcell{0} \\
\hline
\multicolumn{1}{r}{$j$} & 0 & 1 & 2 & 3 & 4 & 5 & 6 & 7 & 8
\end{tabular}
\parbox[c][2.6em][c]{\linewidth}{\centering $X^{\hspace{0.02cm}9}_{i,j}\ \mathrm{mod}\ {\hspace{0.02cm}9}$}
\end{minipage}
\begin{minipage}[t][4.5cm]{0.30\textwidth}
\vspace{0pt}
\centering
\scriptsize
\setlength{\tabcolsep}{2pt}
\renewcommand{\arraystretch}{0.9}
\begin{tabular}{@{}r|rrrrrrrrr@{}}
$i$ \\
$0$ & \ycell{1} \\
$1$ & \ycell{8} & \ycell{1} \\
$2$ & \ycell{1} & \ycell{7} & \ycell{1} \\
$3$ & \ycell{2} & \ycell{3} & \ycell{6} & \ycell{1} \\
$4$ & \ycell{7} & \ycell{8} & \ycell{6} & \rcelld{5} & \ycell{1} \\
$5$ & \ycell{2} & \ycell{8} & \rcelld{2} & \ycell{1} & \ycell{4} & \ycell{1} \\
$6$ & \ycell{1} & \rcelld{3} & \ycell{6} & \ycell{1} & \ycell{6} & \ycell{3} & \ycell{1} \\
$7$ & \rcelld{8} & \ycell{7} & \ycell{6} & \ycell{5} & \ycell{4} & \ycell{3} & \ycell{2} & \ycell{1} \\
$8$ & \ycell{1} & \ycell{1} & \ycell{1} & \ycell{1} & \ycell{1} & \ycell{1} & \ycell{1} & \ycell{1} & \ycell{1} \\
\hline
\multicolumn{1}{r}{$j$} & 0 & 1 & 2 & 3 & 4 & 5 & 6 & 7 & 8
\end{tabular}
\parbox[c][2.6em][c]{\linewidth}{\centering $B^{\hspace{0.02cm}9}_{i,j}\ \mathrm{mod}\ {\hspace{0.02cm}9}$}
\end{minipage}
\hfill
\begin{minipage}[t][4.5cm]{0.30\textwidth}
\vspace{0pt}
\centering
\scriptsize
\setlength{\tabcolsep}{2pt}
\renewcommand{\arraystretch}{0.9}
\begin{tabular}{@{}r|rrrrrrrrr@{}}
$i$ \\
$0$ & \gcell{0} \\
$1$ & \ycell{2} & \gcell{0} \\
$2$ & \gcell{0} & \ycell{4} & \gcell{0} \\
$3$ & \ycell{8} & \gcell{0} & \ycell{6} & \gcell{0} \\
$4$ & \ycell{3} & \ycell{5} & \gcell{0} & \ycell{8} & \gcell{0} \\
$5$ & \ycell{8} & \ycell{6} & \ycell{8} & \gcell{0} & \ycell{1} & \gcell{0} \\
$6$ & \gcell{0} & \ycell{3} & \gcell{0} & \ycell{1} & \gcell{0} & \ycell{3} & \gcell{0} \\
$7$ & \ycell{2} & \gcell{0} & \ycell{6} & \ycell{3} & \ycell{4} & \gcell{0} & \ycell{5} & \gcell{0} \\
$8$ & \gcell{0} & \ycell{7} & \gcell{0} & \ycell{1} & \ycell{6} & \ycell{1} & \gcell{0} & \ycell{7} & \gcell{0} \\
\hline
\multicolumn{1}{r}{$j$} & 0 & 1 & 2 & 3 & 4 & 5 & 6 & 7 & 8
\end{tabular}
\parbox[c][2.6em][c]{\linewidth}{\centering $W^{\hspace{0.02cm}9}_{i,j}\ \mathrm{mod}\ {\hspace{0.02cm}9}$}
\end{minipage}
\hfill
\begin{minipage}[t][4.5cm]{0.30\textwidth}
\vspace{0pt}
\centering
\scriptsize
\setlength{\tabcolsep}{2pt}
\renewcommand{\arraystretch}{0.9}
\begin{tabular}{@{}r|rrrrrrrrr@{}}
$i$ \\
$0$ & \ycell{2} \\
$1$ & \ycell{2} & \ycell{2} \\
$2$ & \ycell{2} & \ycell{4} & \ycell{2} \\
$3$ & \ycell{8} & \ycell{6} & \ycell{6} & \ycell{2} \\
$4$ & \ycell{8} & \ycell{5} & \ycell{3} & \ycell{8} & \ycell{2} \\
$5$ & \ycell{8} & \ycell{4} & \ycell{8} & \ycell{2} & \ycell{1} & \ycell{2} \\
$6$ & \ycell{2} & \ycell{3} & \ycell{3} & \ycell{1} & \ycell{3} & \ycell{3} & \ycell{2} \\
$7$ & \ycell{2} & \ycell{5} & \ycell{6} & \ycell{4} & \ycell{4} & \ycell{6} & \ycell{5} & \ycell{2} \\
$8$ & \ycell{2} & \ycell{7} & \ycell{2} & \ycell{1} & \ycell{8} & \ycell{1} & \ycell{2} & \ycell{7} & \ycell{2} \\
\hline
\multicolumn{1}{r}{$j$} & 0 & 1 & 2 & 3 & 4 & 5 & 6 & 7 & 8
\end{tabular}
\parbox[c][2.6em][c]{\linewidth}{\centering $Y^{\hspace{0.02cm}9}_{i,j}\ \mathrm{mod}\ {\hspace{0.02cm}9}$}
\end{minipage}
\label{tab:swxy-grid-9}
\end{table}

\begin{table}[ht]
\caption{Example for $N= 10 = 2 \times 5$. Representation of the different decompositions and the associated congruence properties ({\bf Green} if congruent to $0$ modulo $N$, {\bf Yellow} otherwise). Bold cells ({\bf Red}) correspond to indices such that $i+j = N-2$, appearing in the expression of $L_N-1$ (\ref{eq:PTTlucas}).}
\centering
\begin{minipage}[t][4.5cm]{0.27\textwidth}
\vspace{0pt}
\centering
\scriptsize
\setlength{\tabcolsep}{2pt}
\renewcommand{\arraystretch}{0.9}
\begin{tabular}{@{}r|rrrrrrrrrr@{}}
$i$ \\
$0$ & 1 \\
$1$ & 1 & 1 \\
$2$ & 1 & 2 & 1 \\
$3$ & 1 & 3 & 3 & 1 \\
$4$ & 1 & 4 & 6 & 4 & \rcelld{1} \\
$5$ & 1 & 5 & 10 & \rcelld{10} & 5 & 1 \\
$6$ & 1 & 6 & \rcelld{15} & 20 & 15 & 6 & 1 \\
$7$ & 1 & \rcelld{7} & 21 & 35 & 35 & 21 & 7 & 1 \\
$8$ & \rcelld{1} & 8 & 28 & 56 & 70 & 56 & 28 & 8 & 1 \\
$9$ & 1 & 9 & 36 & 84 & 126 & 126 & 84 & 36 & 9 & 1 \\
\hline
\multicolumn{1}{r}{$j$} & 0 & 1 & 2 & 3 & 4 & 5 & 6 & 7 & 8 & 9
\end{tabular}
\parbox[c][2.6em][c]{\linewidth}{\centering $A^{\hspace{0.02cm}10}_{i,j}=\binom{i}{j}$}
\end{minipage}
\hfill
\begin{minipage}[t][4.5cm]{0.315\textwidth}
\vspace{0pt}
\centering
\scriptsize
\setlength{\tabcolsep}{2pt}
\renewcommand{\arraystretch}{0.9}
\begin{tabular}{@{}r|rrrrrrrrrr@{}}
$i$ \\
$0$ & 2 \\
$1$ & 10 & 2 \\
$2$ & 37 & 10 & 2 \\
$3$ & 85 & 31 & 10 & 2 \\
$4$ & 127 & 60 & 27 & 10 & \rcellD{2} \\
$5$ & 127 & 75 & 45 & \rcellD{25} & 10 & 2 \\
$6$ & 85 & 62 & \rcellD{50} & 40 & 25 & 10 & 2 \\
$7$ & 37 & \rcellD{35} & 42 & 50 & 45 & 27 & 10 & 2 \\
$8$ & \rcellD{10} & 16 & 35 & 62 & 75 & 60 & 31 & 10 & 2 \\
$9$ & 2 & 10 & 37 & 85 & 127 & 127 & 85 & 37 & 10 & 2 \\
\hline
\multicolumn{1}{r}{$j$} & 0 & 1 & 2 & 3 & 4 & 5 & 6 & 7 & 8 & 9
\end{tabular}
\parbox[c][2.6em][c]{\linewidth}{\centering $S^{\hspace{0.02cm}10}_{i,j}=A^{\hspace{0.02cm}10}_{i,j}+B^{\hspace{0.02cm}10}_{i,j}$}
\end{minipage}
\hfill
\begin{minipage}[t][4.5cm]{0.315\textwidth}
\vspace{0pt}
\centering
\scriptsize
\setlength{\tabcolsep}{2pt}
\renewcommand{\arraystretch}{0.9}
\begin{tabular}{@{}r|rrrrrrrrrr@{}}
$i$ \\
$0$ & 0 \\
$1$ & 10 & 0 \\
$2$ & -35 & 10 & 0 \\
$3$ & 85 & -25 & 10 & 0 \\
$4$ & -125 & 60 & -15 & 10 & 0 \\
$5$ & 127 & -65 & 45 & -5 & 10 & 0 \\
$6$ & -83 & 62 & -20 & 40 & 5 & 10 & 0 \\
$7$ & 37 & -21 & 42 & 20 & 45 & 15 & 10 & 0 \\
$8$ & -8 & 16 & 21 & 62 & 65 & 60 & 25 & 10 & 0 \\
$9$ & 2 & 8 & 37 & 83 & 127 & 125 & 85 & 35 & 10 & 0 \\
\hline
\multicolumn{1}{r}{$j$} & 0 & 1 & 2 & 3 & 4 & 5 & 6 & 7 & 8 & 9
\end{tabular}
\parbox[c][2.6em][c]{\linewidth}{\centering $X^{\hspace{0.02cm}10}_{i,j}=A^{\hspace{0.02cm}10}_{i,j}{-}(-1)^{i{+}j}B^{\hspace{0.02cm}10}_{i,j}$}
\end{minipage}
\begin{minipage}[t][4.5cm]{0.27\textwidth}
\vspace{0pt}
\centering
\scriptsize
\setlength{\tabcolsep}{2pt}
\renewcommand{\arraystretch}{0.9}
\begin{tabular}{@{}r|rrrrrrrrrr@{}}
$i$ \\
$0$ & 1 \\
$1$ & 9 & 1 \\
$2$ & 36 & 8 & 1 \\
$3$ & 84 & 28 & 7 & 1 \\
$4$ & 126 & 56 & 21 & 6 & \rcelld{1} \\
$5$ & 126 & 70 & 35 & \rcelld{15} & 5 & 1 \\
$6$ & 84 & 56 & \rcelld{35} & 20 & 10 & 4 & 1 \\
$7$ & 36 & \rcelld{28} & 21 & 15 & 10 & 6 & 3 & 1 \\
$8$ & \rcelld{9} & 8 & 7 & 6 & 5 & 4 & 3 & 2 & 1 \\
$9$ & 1 & 1 & 1 & 1 & 1 & 1 & 1 & 1 & 1 & 1 \\
\hline
\multicolumn{1}{r}{$j$} & 0 & 1 & 2 & 3 & 4 & 5 & 6 & 7 & 8 & 9
\end{tabular}
\parbox[c][2.6em][c]{\linewidth}{\centering $B^{\hspace{0.02cm}10}_{i,j}=\binom{{\hspace{0.02cm}10}-1-j}{{\hspace{0.02cm}10}-1-i}$}
\end{minipage}
\hfill
\begin{minipage}[t][4.5cm]{0.351\textwidth}
\vspace{0pt}
\centering
\scriptsize
\setlength{\tabcolsep}{2pt}
\renewcommand{\arraystretch}{0.9}
\begin{tabular}{@{}r|rrrrrrrrrr@{}}
$i$ \\
$0$ & 0 \\
$1$ & -8 & 0 \\
$2$ & -35 & -6 & 0 \\
$3$ & -83 & -25 & -4 & 0 \\
$4$ & -125 & -52 & -15 & -2 & 0 \\
$5$ & -125 & -65 & -25 & -5 & 0 & 0 \\
$6$ & -83 & -50 & -20 & 0 & 5 & 2 & 0 \\
$7$ & -35 & -21 & 0 & 20 & 25 & 15 & 4 & 0 \\
$8$ & -8 & 0 & 21 & 50 & 65 & 52 & 25 & 6 & 0 \\
$9$ & 0 & 8 & 35 & 83 & 125 & 125 & 83 & 35 & 8 & 0 \\
\hline
\multicolumn{1}{r}{$j$} & 0 & 1 & 2 & 3 & 4 & 5 & 6 & 7 & 8 & 9
\end{tabular}
\parbox[c][2.6em][c]{\linewidth}{\centering $W^{\hspace{0.02cm}10}_{i,j}=A^{\hspace{0.02cm}10}_{i,j}-B^{\hspace{0.02cm}10}_{i,j}$}
\end{minipage}
\hfill
\begin{minipage}[t][4.5cm]{0.315\textwidth}
\vspace{0pt}
\centering
\scriptsize
\setlength{\tabcolsep}{2pt}
\renewcommand{\arraystretch}{0.9}
\begin{tabular}{@{}r|rrrrrrrrrr@{}}
$i$ \\
$0$ & 2 \\
$1$ & -8 & 2 \\
$2$ & 37 & -6 & 2 \\
$3$ & -83 & 31 & -4 & 2 \\
$4$ & 127 & -52 & 27 & -2 & 2 \\
$5$ & -125 & 75 & -25 & 25 & 0 & 2 \\
$6$ & 85 & -50 & 50 & 0 & 25 & 2 & 2 \\
$7$ & -35 & 35 & 0 & 50 & 25 & 27 & 4 & 2 \\
$8$ & 10 & 0 & 35 & 50 & 75 & 52 & 31 & 6 & 2 \\
$9$ & 0 & 10 & 35 & 85 & 125 & 127 & 83 & 37 & 8 & 2 \\
\hline
\multicolumn{1}{r}{$j$} & 0 & 1 & 2 & 3 & 4 & 5 & 6 & 7 & 8 & 9
\end{tabular}
\parbox[c][2.6em][c]{\linewidth}{\centering $Y^{\hspace{0.02cm}10}_{i,j}=A^{\hspace{0.02cm}10}_{i,j}{+}(-1)^{i{+}j}B^{\hspace{0.02cm}10}_{i,j}$}
\end{minipage}
\begin{minipage}[t][4.5cm]{0.27\textwidth}
\vspace{0pt}
\centering
\scriptsize
\setlength{\tabcolsep}{2pt}
\renewcommand{\arraystretch}{0.9}
\begin{tabular}{@{}r|rrrrrrrrrr@{}}
$i$ \\
$0$ & \ycell{1} \\
$1$ & \ycell{1} & \ycell{1} \\
$2$ & \ycell{1} & \ycell{2} & \ycell{1} \\
$3$ & \ycell{1} & \ycell{3} & \ycell{3} & \ycell{1} \\
$4$ & \ycell{1} & \ycell{4} & \ycell{6} & \ycell{4} & \rcelld{1} \\
$5$ & \ycell{1} & \ycell{5} & \gcell{0} & \rcelld{0} & \ycell{5} & \ycell{1} \\
$6$ & \ycell{1} & \ycell{6} & \rcelld{5} & \gcell{0} & \ycell{5} & \ycell{6} & \ycell{1} \\
$7$ & \ycell{1} & \rcelld{7} & \ycell{1} & \ycell{5} & \ycell{5} & \ycell{1} & \ycell{7} & \ycell{1} \\
$8$ & \rcelld{1} & \ycell{8} & \ycell{8} & \ycell{6} & \gcell{0} & \ycell{6} & \ycell{8} & \ycell{8} & \ycell{1} \\
$9$ & \ycell{1} & \ycell{9} & \ycell{6} & \ycell{4} & \ycell{6} & \ycell{6} & \ycell{4} & \ycell{6} & \ycell{9} & \ycell{1} \\
\hline
\multicolumn{1}{r}{$j$} & 0 & 1 & 2 & 3 & 4 & 5 & 6 & 7 & 8 & 9
\end{tabular}
\parbox[c][2.6em][c]{\linewidth}{\centering $A^{\hspace{0.02cm}10}_{i,j}\ \mathrm{mod}\ {\hspace{0.02cm}10}$}
\end{minipage}
\hfill
\begin{minipage}[t][4.5cm]{0.315\textwidth}
\vspace{0pt}
\centering
\scriptsize
\setlength{\tabcolsep}{2pt}
\renewcommand{\arraystretch}{0.9}
\begin{tabular}{@{}r|rrrrrrrrrr@{}}
$i$ \\
$0$ & \ycell{2} \\
$1$ & \gcell{0} & \ycell{2} \\
$2$ & \ycell{7} & \gcell{0} & \ycell{2} \\
$3$ & \ycell{5} & \ycell{1} & \gcell{0} & \ycell{2} \\
$4$ & \ycell{7} & \gcell{0} & \ycell{7} & \gcell{0} & \rcellD{2} \\
$5$ & \ycell{7} & \ycell{5} & \ycell{5} & \rcellD{5} & \gcell{0} & \ycell{2} \\
$6$ & \ycell{5} & \ycell{2} & \rcellD{0} & \gcell{0} & \ycell{5} & \gcell{0} & \ycell{2} \\
$7$ & \ycell{7} & \rcellD{5} & \ycell{2} & \gcell{0} & \ycell{5} & \ycell{7} & \gcell{0} & \ycell{2} \\
$8$ & \rcellD{0} & \ycell{6} & \ycell{5} & \ycell{2} & \ycell{5} & \gcell{0} & \ycell{1} & \gcell{0} & \ycell{2} \\
$9$ & \ycell{2} & \gcell{0} & \ycell{7} & \ycell{5} & \ycell{7} & \ycell{7} & \ycell{5} & \ycell{7} & \gcell{0} & \ycell{2} \\
\hline
\multicolumn{1}{r}{$j$} & 0 & 1 & 2 & 3 & 4 & 5 & 6 & 7 & 8 & 9
\end{tabular}
\parbox[c][2.6em][c]{\linewidth}{\centering $S^{\hspace{0.02cm}10}_{i,j}\ \mathrm{mod}\ {\hspace{0.02cm}10}$}
\end{minipage}
\hfill
\begin{minipage}[t][4.5cm]{0.315\textwidth}
\vspace{0pt}
\centering
\scriptsize
\setlength{\tabcolsep}{2pt}
\renewcommand{\arraystretch}{0.9}
\begin{tabular}{@{}r|rrrrrrrrrr@{}}
$i$ \\
$0$ & \gcell{0} \\
$1$ & \gcell{0} & \gcell{0} \\
$2$ & \ycell{5} & \gcell{0} & \gcell{0} \\
$3$ & \ycell{5} & \ycell{5} & \gcell{0} & \gcell{0} \\
$4$ & \ycell{5} & \gcell{0} & \ycell{5} & \gcell{0} & \gcell{0} \\
$5$ & \ycell{7} & \ycell{5} & \ycell{5} & \ycell{5} & \gcell{0} & \gcell{0} \\
$6$ & \ycell{7} & \ycell{2} & \gcell{0} & \gcell{0} & \ycell{5} & \gcell{0} & \gcell{0} \\
$7$ & \ycell{7} & \ycell{9} & \ycell{2} & \gcell{0} & \ycell{5} & \ycell{5} & \gcell{0} & \gcell{0} \\
$8$ & \ycell{2} & \ycell{6} & \ycell{1} & \ycell{2} & \ycell{5} & \gcell{0} & \ycell{5} & \gcell{0} & \gcell{0} \\
$9$ & \ycell{2} & \ycell{8} & \ycell{7} & \ycell{3} & \ycell{7} & \ycell{5} & \ycell{5} & \ycell{5} & \gcell{0} & \gcell{0} \\
\hline
\multicolumn{1}{r}{$j$} & 0 & 1 & 2 & 3 & 4 & 5 & 6 & 7 & 8 & 9
\end{tabular}
\parbox[c][2.6em][c]{\linewidth}{\centering $X^{\hspace{0.02cm}10}_{i,j}\ \mathrm{mod}\ {\hspace{0.02cm}10}$}
\end{minipage}
\begin{minipage}[t][4.5cm]{0.27\textwidth}
\vspace{0pt}
\centering
\scriptsize
\setlength{\tabcolsep}{2pt}
\renewcommand{\arraystretch}{0.9}
\begin{tabular}{@{}r|rrrrrrrrrr@{}}
$i$ \\
$0$ & \ycell{1} \\
$1$ & \ycell{9} & \ycell{1} \\
$2$ & \ycell{6} & \ycell{8} & \ycell{1} \\
$3$ & \ycell{4} & \ycell{8} & \ycell{7} & \ycell{1} \\
$4$ & \ycell{6} & \ycell{6} & \ycell{1} & \ycell{6} & \rcelld{1} \\
$5$ & \ycell{6} & \gcell{0} & \ycell{5} & \rcelld{5} & \ycell{5} & \ycell{1} \\
$6$ & \ycell{4} & \ycell{6} & \rcelld{5} & \gcell{0} & \gcell{0} & \ycell{4} & \ycell{1} \\
$7$ & \ycell{6} & \rcelld{8} & \ycell{1} & \ycell{5} & \gcell{0} & \ycell{6} & \ycell{3} & \ycell{1} \\
$8$ & \rcelld{9} & \ycell{8} & \ycell{7} & \ycell{6} & \ycell{5} & \ycell{4} & \ycell{3} & \ycell{2} & \ycell{1} \\
$9$ & \ycell{1} & \ycell{1} & \ycell{1} & \ycell{1} & \ycell{1} & \ycell{1} & \ycell{1} & \ycell{1} & \ycell{1} & \ycell{1} \\
\hline
\multicolumn{1}{r}{$j$} & 0 & 1 & 2 & 3 & 4 & 5 & 6 & 7 & 8 & 9
\end{tabular}
\parbox[c][2.6em][c]{\linewidth}{\centering $B^{\hspace{0.02cm}10}_{i,j}\ \mathrm{mod}\ {\hspace{0.02cm}10}$}
\end{minipage}
\hfill
\begin{minipage}[t][4.5cm]{0.315\textwidth}
\vspace{0pt}
\centering
\scriptsize
\setlength{\tabcolsep}{2pt}
\renewcommand{\arraystretch}{0.9}
\begin{tabular}{@{}r|rrrrrrrrrr@{}}
$i$ \\
$0$ & \gcell{0} \\
$1$ & \ycell{2} & \gcell{0} \\
$2$ & \ycell{5} & \ycell{4} & \gcell{0} \\
$3$ & \ycell{7} & \ycell{5} & \ycell{6} & \gcell{0} \\
$4$ & \ycell{5} & \ycell{8} & \ycell{5} & \ycell{8} & \gcell{0} \\
$5$ & \ycell{5} & \ycell{5} & \ycell{5} & \ycell{5} & \gcell{0} & \gcell{0} \\
$6$ & \ycell{7} & \gcell{0} & \gcell{0} & \gcell{0} & \ycell{5} & \ycell{2} & \gcell{0} \\
$7$ & \ycell{5} & \ycell{9} & \gcell{0} & \gcell{0} & \ycell{5} & \ycell{5} & \ycell{4} & \gcell{0} \\
$8$ & \ycell{2} & \gcell{0} & \ycell{1} & \gcell{0} & \ycell{5} & \ycell{2} & \ycell{5} & \ycell{6} & \gcell{0} \\
$9$ & \gcell{0} & \ycell{8} & \ycell{5} & \ycell{3} & \ycell{5} & \ycell{5} & \ycell{3} & \ycell{5} & \ycell{8} & \gcell{0} \\
\hline
\multicolumn{1}{r}{$j$} & 0 & 1 & 2 & 3 & 4 & 5 & 6 & 7 & 8 & 9
\end{tabular}
\parbox[c][2.6em][c]{\linewidth}{\centering $W^{\hspace{0.02cm}10}_{i,j}\ \mathrm{mod}\ {\hspace{0.02cm}10}$}
\end{minipage}
\hfill
\begin{minipage}[t][4.5cm]{0.315\textwidth}
\vspace{0pt}
\centering
\scriptsize
\setlength{\tabcolsep}{2pt}
\renewcommand{\arraystretch}{0.9}
\begin{tabular}{@{}r|rrrrrrrrrr@{}}
$i$ \\
$0$ & \ycell{2} \\
$1$ & \ycell{2} & \ycell{2} \\
$2$ & \ycell{7} & \ycell{4} & \ycell{2} \\
$3$ & \ycell{7} & \ycell{1} & \ycell{6} & \ycell{2} \\
$4$ & \ycell{7} & \ycell{8} & \ycell{7} & \ycell{8} & \ycell{2} \\
$5$ & \ycell{5} & \ycell{5} & \ycell{5} & \ycell{5} & \gcell{0} & \ycell{2} \\
$6$ & \ycell{5} & \gcell{0} & \gcell{0} & \gcell{0} & \ycell{5} & \ycell{2} & \ycell{2} \\
$7$ & \ycell{5} & \ycell{5} & \gcell{0} & \gcell{0} & \ycell{5} & \ycell{7} & \ycell{4} & \ycell{2} \\
$8$ & \gcell{0} & \gcell{0} & \ycell{5} & \gcell{0} & \ycell{5} & \ycell{2} & \ycell{1} & \ycell{6} & \ycell{2} \\
$9$ & \gcell{0} & \gcell{0} & \ycell{5} & \ycell{5} & \ycell{5} & \ycell{7} & \ycell{3} & \ycell{7} & \ycell{8} & \ycell{2} \\
\hline
\multicolumn{1}{r}{$j$} & 0 & 1 & 2 & 3 & 4 & 5 & 6 & 7 & 8 & 9
\end{tabular}
\parbox[c][2.6em][c]{\linewidth}{\centering $Y^{\hspace{0.02cm}10}_{i,j}\ \mathrm{mod}\ {\hspace{0.02cm}10}$}
\end{minipage}
\label{tab:swxy-grid-10}
\end{table}

\begin{table}[ht]
\caption{Example for $N = 35 = 7\times 5$, illustrating some of the properties proved for composite odd integer $N$. {\bf Blue}: $S^{mq}_{i,j}\equiv m\mymod{mq}$. {\bf Red}: $S^{mq}_{i,j}$ with $i+j = mq-2$ (terms appearing in (\ref{eq:PTTlucas})). {\bf Yellow} : $(i_q,j_q)=(N-1-q,q-1)$ and $(i'_q,j'_q)=((N+q)/2-1,(N-q)/2-1)$.  The terms $(i_q,j_q)$ and $(i'_q,j'_q)$ are those used in the proof of Theorem~\ref{theo:pseudoprime} when $N$ is a Fibonacci pseudoprime. In the general composite case, these terms delimit the corner regions of $S^N$ that still exhibit a Pascal tiling. Corners are $(i,j)\in\mathcal{D}_{j\leq i}$ such that $i<q$, $i>i_q$ and $j<j_q$, or $j>N-1-q$ where $q$ is the smallest prime factor of $N$. Terms in the corner, not yet colored, are colored according to their divisibility by $N$ ({\bf Green}: $S^{mq}_{i,j}\equiv 0\mymod{mq}$, and {\bf Yellow} otherwise).\\}
\centering
\makebox[\textwidth][c]{\vspace{0pt}
\centering
\scriptsize
\setlength{\tabcolsep}{2pt}
\renewcommand{\arraystretch}{0.9}
\begin{tabular}{r|rrrrrrrrrrrrrrrrrrrrrrrrrrrrrrrrrrr}
$i$ \\
$0$ & \ycelld{2} & \ \  & \ \  & \ \  & \ \  & \ \  & \ \  & \ \  & \ \  & \ \  & \ \  & \ \  & \ \  & \ \  & \ \  & \ \  & \ \  & \ \  & \ \  & \ \  & \ \  & \ \  & \ \  & \ \  & \ \  & \ \  & \ \  & \ \  & \ \  & \ \  & \ \  & \ \  & \ \  & \ \  & \ \  \\
$1$ & \gcellD{0} & \ycelld{2} & \ \  & \ \  & \ \  & \ \  & \ \  & \ \  & \ \  & \ \  & \ \  & \ \  & \ \  & \ \  & \ \  & \ \  & \ \  & \ \  & \ \  & \ \  & \ \  & \ \  & \ \  & \ \  & \ \  & \ \  & \ \  & \ \  & \ \  & \ \  & \ \  & \ \  & \ \  & \ \  & \ \  \\
$2$ & \ycelld{2} & \gcellD{0} & \ycelld{2} & \ \  & \ \  & \ \  & \ \  & \ \  & \ \  & \ \  & \ \  & \ \  & \ \  & \ \  & \ \  & \ \  & \ \  & \ \  & \ \  & \ \  & \ \  & \ \  & \ \  & \ \  & \ \  & \ \  & \ \  & \ \  & \ \  & \ \  & \ \  & \ \  & \ \  & \ \  & \ \  \\
$3$ & \gcellD{0} & \ycelld{6} & \gcellD{0} & \ycelld{2} & \ \  & \ \  & \ \  & \ \  & \ \  & \ \  & \ \  & \ \  & \ \  & \ \  & \ \  & \ \  & \ \  & \ \  & \ \  & \ \  & \ \  & \ \  & \ \  & \ \  & \ \  & \ \  & \ \  & \ \  & \ \  & \ \  & \ \  & \ \  & \ \  & \ \  & \ \  \\
$4$ & \ycelld{2} & \gcellD{0} & \ycelld{12} & \gcellD{0} & \ycelld{2} & \ \  & \ \  & \ \  & \ \  & \ \  & \ \  & \ \  & \ \  & \ \  & \ \  & \ \  & \ \  & \ \  & \ \  & \ \  & \ \  & \ \  & \ \  & \ \  & \ \  & \ \  & \ \  & \ \  & \ \  & \ \  & \ \  & \ \  & \ \  & \ \  & \ \  \\
$5$ & \bcellD{7} & 10 & \gcellD{0} & 20 & \gcellD{0} & 2 & \ \  & \ \  & \ \  & \ \  & \ \  & \ \  & \ \  & \ \  & \ \  & \ \  & \ \  & \ \  & \ \  & \ \  & \ \  & \ \  & \ \  & \ \  & \ \  & \ \  & \ \  & \ \  & \ \  & \ \  & \ \  & \ \  & \ \  & \ \  & \ \  \\
$6$ & 30 & \bcellD{7} & 30 & \gcellD{0} & 30 & \gcellD{0} & 2 & \ \  & \ \  & \ \  & \ \  & \ \  & \ \  & \ \  & \ \  & \ \  & \ \  & \ \  & \ \  & \ \  & \ \  & \ \  & \ \  & \ \  & \ \  & \ \  & \ \  & \ \  & \ \  & \ \  & \ \  & \ \  & \ \  & \ \  & \ \  \\
$7$ & 12 & 0 & \bcellD{7} & 0 & \gcellD{0} & 7 & \gcellD{0} & 2 & \ \  & \ \  & \ \  & \ \  & \ \  & \ \  & \ \  & \ \  & \ \  & \ \  & \ \  & \ \  & \ \  & \ \  & \ \  & \ \  & \ \  & \ \  & \ \  & \ \  & \ \  & \ \  & \ \  & \ \  & \ \  & \ \  & \ \  \\
$8$ & 25 & 26 & 0 & \bcellD{7} & 0 & \gcellD{0} & 21 & \gcellD{0} & 2 & \ \  & \ \  & \ \  & \ \  & \ \  & \ \  & \ \  & \ \  & \ \  & \ \  & \ \  & \ \  & \ \  & \ \  & \ \  & \ \  & \ \  & \ \  & \ \  & \ \  & \ \  & \ \  & \ \  & \ \  & \ \  & \ \  \\
$9$ & 12 & 15 & 12 & 0 & \bcellD{7} & 7 & \gcellD{0} & 2 & \gcellD{0} & 2 & \ \  & \ \  & \ \  & \ \  & \ \  & \ \  & \ \  & \ \  & \ \  & \ \  & \ \  & \ \  & \ \  & \ \  & \ \  & \ \  & \ \  & \ \  & \ \  & \ \  & \ \  & \ \  & \ \  & \ \  & \ \  \\
$10$ & 11 & 15 & 5 & 5 & 0 & \bcellD{7} & 0 & \gcellD{0} & 20 & \gcellD{0} & 2 & \ \  & \ \  & \ \  & \ \  & \ \  & \ \  & \ \  & \ \  & \ \  & \ \  & \ \  & \ \  & \ \  & \ \  & \ \  & \ \  & \ \  & \ \  & \ \  & \ \  & \ \  & \ \  & \ \  & \ \  \\
$11$ & 26 & 16 & 30 & 30 & 5 & 7 & \bcellD{7} & 30 & \gcellD{0} & 5 & \gcellD{0} & 2 & \ \  & \ \  & \ \  & \ \  & \ \  & \ \  & \ \  & \ \  & \ \  & \ \  & \ \  & \ \  & \ \  & \ \  & \ \  & \ \  & \ \  & \ \  & \ \  & \ \  & \ \  & \ \  & \ \  \\
$12$ & 11 & 32 & 26 & 15 & 20 & 12 & 14 & \bcellD{7} & 10 & \gcellD{0} & 27 & \gcellD{0} & 2 & \ \  & \ \  & \ \  & \ \  & \ \  & \ \  & \ \  & \ \  & \ \  & \ \  & \ \  & \ \  & \ \  & \ \  & \ \  & \ \  & \ \  & \ \  & \ \  & \ \  & \ \  & \ \  \\
$13$ & 26 & 3 & 33 & 31 & 5 & 17 & 26 & 16 & \bcellD{7} & 30 & \gcellD{0} & 16 & \gcellD{0} & 2 & \ \  & \ \  & \ \  & \ \  & \ \  & \ \  & \ \  & \ \  & \ \  & \ \  & \ \  & \ \  & \ \  & \ \  & \ \  & \ \  & \ \  & \ \  & \ \  & \ \  & \ \  \\
$14$ & 21 & 14 & 21 & 14 & 21 & 7 & 28 & 12 & 28 & \bcellD{7} & 7 & \gcellD{0} & 7 & \gcellD{0} & 2 & \ \  & \ \  & \ \  & \ \  & \ \  & \ \  & \ \  & \ \  & \ \  & \ \  & \ \  & \ \  & \ \  & \ \  & \ \  & \ \  & \ \  & \ \  & \ \  & \ \  \\
$15$ & 16 & 0 & 0 & 0 & 0 & 28 & 0 & 20 & 5 & 0 & \bcellD{7} & 0 & \gcellD{0} & 0 & \gcellD{0} & 2 & \ \  & \ \  & \ \  & \ \  & \ \  & \ \  & \ \  & \ \  & \ \  & \ \  & \ \  & \ \  & \ \  & \ \  & \ \  & \ \  & \ \  & \ \  & \ \  \\
$16$ & 21 & 11 & 0 & 0 & 0 & 28 & 28 & 5 & 5 & 5 & 14 & \bcellD{7} & 0 & \gcellD{0} & 30 & \gcellD{0} & 2 & \ \  & \ \  & \ \  & \ \  & \ \  & \ \  & \ \  & \ \  & \ \  & \ \  & \ \  & \ \  & \ \  & \ \  & \ \  & \ \  & \ \  & \ \  \\
$17$ & 16 & 7 & 6 & 0 & 0 & 28 & 21 & 13 & 15 & 25 & 12 & 28 & \bcellD{7} & 0 & \gcellD{0} & 27 & \rcellD{0} & 2 & \ \  & \ \  & \ \  & \ \  & \ \  & \ \  & \ \  & \ \  & \ \  & \ \  & \ \  & \ \  & \ \  & \ \  & \ \  & \ \  & \ \  \\
$18$ & 21 & 8 & 28 & 1 & 0 & 28 & 14 & 19 & 3 & 30 & 24 & 26 & 7 & \bcellD{7} & 30 & \rcellD{0} & 26 & \gcellD{0} & 2 & \ \  & \ \  & \ \  & \ \  & \ \  & \ \  & \ \  & \ \  & \ \  & \ \  & \ \  & \ \  & \ \  & \ \  & \ \  & \ \  \\
$19$ & 16 & 14 & 6 & 14 & 31 & 28 & 7 & 13 & 32 & 18 & 22 & 16 & 12 & 21 & \ycellD{7} & 17 & \gcellD{0} & 27 & \gcellD{0} & 2 & \ \  & \ \  & \ \  & \ \  & \ \  & \ \  & \ \  & \ \  & \ \  & \ \  & \ \  & \ \  & \ \  & \ \  & \ \  \\
$20$ & 21 & 5 & 0 & 5 & 0 & 19 & 0 & 5 & 15 & 5 & 22 & 5 & 15 & \ycellD{5} & 30 & \bcellD{7} & 30 & \gcellD{0} & 30 & \gcellD{0} & 2 & \ \  & \ \  & \ \  & \ \  & \ \  & \ \  & \ \  & \ \  & \ \  & \ \  & \ \  & \ \  & \ \  & \ \  \\
$21$ & 26 & 21 & 0 & 0 & 0 & 14 & 14 & 20 & 0 & 0 & 0 & 7 & \rcellD{0} & 0 & 5 & 21 & \bcellD{7} & 0 & \gcellD{0} & 0 & \gcellD{0} & 2 & \ \  & \ \  & \ \  & \ \  & \ \  & \ \  & \ \  & \ \  & \ \  & \ \  & \ \  & \ \  & \ \  \\
$22$ & 11 & 12 & 21 & 0 & 0 & 14 & 28 & 14 & 20 & 0 & 7 & \rcellD{0} & 7 & 0 & 15 & 12 & 7 & \bcellD{7} & 0 & \gcellD{0} & 7 & \gcellD{0} & 2 & \ \  & \ \  & \ \  & \ \  & \ \  & \ \  & \ \  & \ \  & \ \  & \ \  & \ \  & \ \  \\
$23$ & 26 & 8 & 33 & 21 & 0 & 14 & 7 & 27 & 14 & 20 & \rcellD{0} & 21 & 0 & 7 & 5 & 16 & 26 & 28 & \bcellD{7} & 0 & \gcellD{0} & 16 & \gcellD{0} & 2 & \ \  & \ \  & \ \  & \ \  & \ \  & \ \  & \ \  & \ \  & \ \  & \ \  & \ \  \\
$24$ & 11 & 29 & 26 & 19 & 21 & 14 & 21 & 14 & 11 & \rcellD{14} & 27 & 0 & 7 & 0 & 22 & 22 & 24 & 12 & 14 & \bcellD{7} & 7 & \gcellD{0} & 27 & \gcellD{0} & 2 & \ \  & \ \  & \ \  & \ \  & \ \  & \ \  & \ \  & \ \  & \ \  & \ \  \\
$25$ & 12 & 30 & 30 & 30 & 5 & 0 & 0 & 20 & \rcellD{0} & 15 & 14 & 20 & 0 & 0 & 5 & 18 & 30 & 25 & 5 & 0 & \bcellD{7} & 30 & \gcellD{0} & 5 & \gcellD{0} & 2 & \ \  & \ \  & \ \  & \ \  & \ \  & \ \  & \ \  & \ \  & \ \  \\
$26$ & 25 & 32 & 5 & 15 & 20 & 5 & 0 & \rcellD{0} & 30 & 0 & 11 & 14 & 20 & 0 & 15 & 32 & 3 & 15 & 5 & 5 & 28 & \bcellD{7} & 10 & \gcellD{0} & 20 & \gcellD{0} & 2 & \ \  & \ \  & \ \  & \ \  & \ \  & \ \  & \ \  & \ \  \\
$27$ & 12 & 10 & 12 & 10 & 5 & 10 & \ycellD{5} & 20 & 0 & 20 & 14 & 27 & 14 & 20 & 5 & 13 & 19 & 13 & 5 & 20 & 12 & 16 & \bcellD{7} & 30 & \gcellD{0} & 2 & \gcellD{0} & 2 & \ \  & \ \  & \ \  & \ \  & \ \  & \ \  & \ \  \\
$28$ & 30 & 21 & 0 & 7 & 0 & \rcellD{0} & 0 & 5 & 0 & 0 & 21 & 7 & 28 & 14 & 0 & 7 & 14 & 21 & 28 & 0 & 28 & 26 & 14 & \bcellD{7} & 0 & \gcellD{0} & 21 & \gcellD{0} & 2 & \ \  & \ \  & \ \  & \ \  & \ \  & \ \  \\
$29$ & 7 & 30 & 7 & 0 & \ycellD{7} & 0 & 0 & 10 & 5 & 0 & 14 & 14 & 14 & 14 & 19 & 28 & 28 & 28 & 28 & 28 & 7 & 17 & 12 & 7 & \bcellD{7} & 7 & \gcellD{0} & 7 & \gcellD{0} & 2 & \ \  & \ \  & \ \  & \ \  & \ \  \\
$30$ & \ycelld{2} & \gcelld{0} & \ycelld{30} & \rcellD{0} & 0 & 7 & 0 & 5 & 20 & 5 & 21 & 0 & 0 & 0 & 0 & 31 & 0 & 0 & 0 & 0 & 21 & 5 & 20 & 5 & 0 & \bcellD{7} & 0 & \gcellD{0} & 30 & \gcellD{0} & \ycelld{2} & \ \  & \ \  & \ \  & \ \  \\
$31$ & \gcelld{0} & \ycelld{27} & \rcellD{0} & \ycelld{30} & 0 & 0 & 7 & 10 & 15 & 30 & 19 & 21 & 0 & 0 & 5 & 14 & 1 & 0 & 0 & 0 & 14 & 31 & 15 & 30 & 5 & 0 & \bcellD{7} & 0 & \gcellD{0} & 20 & \gcellD{0} & \ycelld{2} & \ \  & \ \  & \ \  \\
$32$ & \ycelld{2} & \rcellD{0} & \ycelld{12} & \gcelld{0} & 30 & 7 & 0 & 12 & 5 & 30 & 26 & 33 & 21 & 0 & 0 & 6 & 28 & 6 & 0 & 0 & 21 & 33 & 26 & 30 & 5 & 12 & 0 & \bcellD{7} & 30 & \gcellD{0} & \ycelld{12} & \gcellD{0} & \ycelld{2} & \ \  & \ \  \\
$33$ & \rcellD{0} & \ycelld{31} & \gcelld{0} & \ycelld{27} & 0 & 30 & 21 & 10 & 32 & 30 & 29 & 8 & 12 & 21 & 5 & 14 & 8 & 7 & 11 & 0 & 14 & 3 & 32 & 16 & 15 & 15 & 26 & 0 & \bcellD{7} & 10 & \gcellD{0} & \ycelld{6} & \gcellD{0} & \ycelld{2} & \ \  \\
$34$ & \ycelld{2} & \gcelld{0} & \ycelld{2} & \gcelld{0} & 2 & 7 & 30 & 12 & 25 & 12 & 11 & 26 & 11 & 26 & 21 & 16 & 21 & 16 & 21 & 16 & 21 & 26 & 11 & 26 & 11 & 12 & 25 & 12 & 30 & \bcellD{7} & \ycelld{2} & \gcellD{0} & \ycelld{2} & \gcellD{0} & \ycelld{2} \\
\hline
\multicolumn{1}{r}{$j$} & 0 & 1 & 2 & 3 & 4 & 5 & 6 & 7 & 8 & 9 & 10 & 11 & 12 & 13 & 14 & 15 & 16 & 17 & 18 & 19 & 20 & 21 & 22 & 23 & 24 & 25 & 26 & 27 & 28 & 29 & 30 & 31 & 32 & 33 & 34
\end{tabular}
}
\parbox[c][2.6em][c]{\linewidth}{\centering $S^{\hspace{0.02cm}35}_{i,j}\mymod{{\hspace{0.02cm}35}}$}
\label{tab:annex}
\end{table}

\end{document}